\newtheorem{thm}{Théorème}[section]
\newtheorem{prop}[thm]{Proposition}
\newtheorem{lem}[thm]{Lemme}
\newtheorem{cor}[thm]{Corollaire}
\theoremstyle{definition}
\newtheorem{defin}[thm]{Définition}
\theoremstyle{remark}
\numberwithin{equation}{section}
\providecommand\ufootnote[1]{{\let\thefootnote\relax\footnote[0]{#1}}}
\newcommand{\cc}{\mathcal C}
\newcommand{\dc}{\mathcal D}
\newcommand{\ec}{\mathcal E}
\newcommand{\rb}{\mathbb R}
\newcommand{\nb}{\mathbb N}
\newcommand{\mb}{\mathbb M}
\newcommand{\cb}{\mathbb C}
\newcommand{\ci}{{\mathcal C}^\infty}
\newcommand{\ol}{\overline}
\newcommand{\pa}{\partial}
\newcommand{\opa}{\ol\partial}
\newcommand{\wt}{\widetilde}
 \DeclareMathOperator{\im}{Im}
\begin{document}

\title{ $L^p$- theory for the tangential Cauchy-Riemann equation}

\author{Christine LAURENT-THI\'{E}BAUT}

\date{}
\maketitle

\ufootnote{\hskip-0.6cm UJF-Grenoble 1, Institut Fourier, Grenoble,
F-38041, France\newline CNRS UMR 5582, Institut Fourier,
Saint-Martin d'H\`eres, F-38402, France
}

\ufootnote{\hskip-0.6cm  {\it 2010 A.M.S. Classification}~: 32V20, 32W10 .\newline {\it Key words}~: tangential Cauchy-Riemann equation, Serre duality, $L^p$ estimates.}

\bibliographystyle{amsplain}

Our purpose is to extend to the tangential Cauchy-Riemann operator the $L^p$ theory, for
$p\geq 1$, developed for the Cauchy-Riemann operator in \cite{LaLp}. We will replace complex manifolds by $s$-concave locally embeddable generic CR manifolds and the Cauchy-Riemann complex by the tangential Cauchy-Riemann complex.
We have to restrict ourself to locally embeddable CR manifolds since, up to now, very few is known in abstract CR manifolds outside the case of compact CR manifolds, where some results were obtained by C.D. Hill and M. Nacinovich \cite{HiNa1} and by M.-C. Shaw and L. Wang \cite{ShWa}.

Compare to the complex case some new difficulties arise in the CR
setting since the Poincaré lemma for the tangential Cauchy-Riemann
operator fails to hold in all degrees. In particular the Dolbeault isomorphism does not hold in all degrees and its classical proof works no more for large degrees.

We are particularly interested in solving the tangential
Cauchy-Riemann equation with exact support in $L^p$ spaces. We consider the following Cauchy problem.

Let  $M$  be a $\ci$-smooth, generic CR submanifold of real codimension $k$ in a complex manifold $X$ of complex dimension $n$ and $D$ a relatively compact domain in $M$ with $\ci$-smooth boundary. For $p\geq 1$, given any $(r,q)$-form $f$ with coefficients in $L^p(M)$,  $0\leq r\leq n$ and $1\leq q\leq n-k$, such that
$${\rm supp}~f\subset\ol D \quad {\rm and}\quad \opa_b f=0~{\rm in~the~weak~sense~in~}M,$$
does there exists a $(r,q-1)$-forme $g$  with coefficients in $L^p(M)$ such that
$${\rm supp}~g\subset\ol D \quad {\rm and}\quad \opa_b g=f~{\rm in~the~weak~sense~in~}M~?$$

We prove (see Corollary \ref{cauchy}) that the answer is yes as soon as $M$ is $s$-concave, $s\geq 2$, $D$ is the transversal intersection of $M$ with a relatively compact completely strictly pseudoconvex domain in $X$ with $\ci$-smooth boundary, $p>1$ and $1\leq q\leq s-1$.
\medskip

The paper is organized as follows. In section one we recall the main definitions of the theory of CR manifolds.

In the second section we develop the $L^p_{loc}$ theory, we prove
 a Poincaré lemma with $L^p$ estimates for the tangential Cauchy-Riemann operator and study the Dolbeault isomorphism. We prove
\begin{thm}
Let $M$ be a $\ci$-smooth, generic, $s$-concave, $s\geq 1$, CR submanifold of real codimension $k$ of a complex manifold $X$ of complex dimension $n$ and $p\geq1$ a real number.
If $0\leq r\leq n$, the natural maps
$$\Phi_q^p~:~H^{r,q}_\infty(M)\to H^{r,q}_{L^p_{loc}}(M)\quad {\rm and}\quad  \Psi_q^p~:~H^{r,q}_{L^p_{loc}}(M)\to  H^{r,q}_{cur}(M)$$
satisfy:

(i) $\Phi_q^p$ and $\Psi_q^p$ are surjective, if $0\leq q\leq s-1$ or $n-k-s+1\leq q\leq n-k$.

(ii) $\Phi_q^p$ and $\Psi_q^p$ are injective, if $0\leq q\leq s$ or $n-k-s+2\leq q\leq n-k$.

If $0\leq r\leq n$, the natural maps
$$\Phi_{c,q}^p~:~H^{r,q}_{c,\infty}(M)\to H^{r,q}_{c,L^p}(M)\quad {\rm and}\quad  \Psi_{c,q}^p~:~H^{r,q}_{c,L^p}(M)\to  H^{r,q}_{c,cur}(M)$$
satisfy:

(i) $\Phi_{c,q}^p$ and $\Psi_{c,q}^p$ are surjective, if $0\leq q\leq s-1$ or $n-k-s+1\leq q\leq n-k$.

(ii) $\Phi_{c,q}^p$ and $\Psi_{c,q}^p$ are injective, if $0\leq q\leq s$ or $n-k-s+2\leq q\leq n-k$.

\end{thm}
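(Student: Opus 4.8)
The plan is to organize the proof around the three complexes of sheaves of germs of $(r,\bullet)$-forms on $M$ — smooth forms $\ec^{r,\bullet}$, forms with $L^p_\loc$ coefficients $\lc^{r,\bullet}$, and currents $\cc^{r,\bullet}$ — all equipped with the differential $\opa_b$, together with the natural inclusion morphisms $\ec^{r,\bullet}\hookrightarrow\lc^{r,\bullet}\hookrightarrow\cc^{r,\bullet}$ that induce $\Phi^p_q$ and $\Psi^p_q$ on global $\opa_b$-cohomology. Each of these sheaves is fine: multiplication by a $\ci$-smooth cut-off preserves each regularity class, so a $\ci$ partition of unity subordinate to any locally finite cover operates on each complex. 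Consequently the cohomology of the global sections of each complex coincides with the hypercohomology of $M$ with values in that complex, and $\Phi^p_q,\Psi^p_q$ are exactly the maps induced on hypercohomology by the inclusions. The whole statement will then follow from a purely homological comparison, once the effect of the inclusions on the \emph{cohomology sheaves} has been determined.

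This is where the Poincar\'e lemma with $L^p$ estimates of Section~2 enters. First I would record that in bidegree $(r,0)$ the three kernel sheaves coincide: on an $s$-concave manifold with $s\ge1$ a $\opa_b$-closed current of type $(r,0)$ is (represented by) a smooth CR form, so the inclusions induce isomorphisms $\mathcal H^0(\ec^{r,\bullet})\cong\mathcal H^0(\lc^{r,\bullet})\cong\mathcal H^0(\cc^{r,\bullet})$. Next, in every ``good'' bidegree the local solvability furnished by the Poincar\'e lemma — stated simultaneously for smooth data, $L^p$ data and currents — shows that the local cohomology sheaves $\mathcal H^q$ of all three complexes \emph{vanish} for $1\le q\le s-1$, and, by the dual Serre-type form of the lemma exploiting the symmetry $q\leftrightarrow n-k-q$ of $s$-concavity, also for $n-k-s+1\le q\le n-k-1$. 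Hence the inclusion morphisms induce isomorphisms on $\mathcal H^q$ for $0\le q\le s-1$ and for $n-k-s+1\le q\le n-k$. The point on which the estimates are essential is that the lemma delivers an $L^p_\loc$ (resp. smooth, resp. current) primitive from $L^p_\loc$ (resp. smooth, resp. current) data: it is this matching of regularity classes, and not mere local exactness, that forces the maps on cohomology sheaves to be isomorphisms.

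With the local picture settled, I would invoke the standard comparison of hypercohomology applied to each of the two inclusion morphisms. On the low end, a morphism of complexes of fine sheaves inducing isomorphisms on cohomology sheaves in degrees $\le m$ induces isomorphisms on the cohomology of global sections in degrees $\le m$ and a monomorphism in degree $m+1$; taking $m=s-1$ yields isomorphisms (hence surjectivity and injectivity) for $q\le s-1$ together with the extra injectivity at $q=s$. On the high end, the mirror statement — isomorphisms on cohomology sheaves in degrees $\ge m'$ yield isomorphisms on global cohomology in degrees $\ge m'+1$ and an epimorphism in degree $m'$ — applied with $m'=n-k-s+1$ yields isomorphisms for $n-k-s+2\le q\le n-k$ together with the extra surjectivity at $q=n-k-s+1$. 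Reading off the two ends gives exactly (i) surjectivity for $0\le q\le s-1$ or $n-k-s+1\le q\le n-k$, and (ii) injectivity for $0\le q\le s$ or $n-k-s+2\le q\le n-k$.

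The compactly supported statements are proved by the identical scheme, replacing global sections by sections with compact support and ordinary hypercohomology by hypercohomology with compact supports: the three sheaves are $c$-soft, hence acyclic for the compact-support functor, and the identification of the cohomology sheaves in the second step is a purely local computation independent of supports, so the same truncation arguments apply verbatim. I expect the principal obstacle to be the second step, the identification of the local cohomology sheaves, and within it two points: the bidegree $(r,0)$ regularity — that a $\opa_b$-closed current is smooth — which is delicate and genuinely uses $s$-concavity; and the demand that the Poincar\'e lemma be established with matching $L^p$ bounds in all three regularity classes at once, since that simultaneity is what makes the induced maps on cohomology sheaves isomorphisms. The remaining care lies in the degree bookkeeping between the low range and the dual high range, where the roles of injectivity and surjectivity at the boundary degrees are interchanged.
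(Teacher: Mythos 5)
Your low-degree argument is sound and is essentially the paper's own route for that range: the three sheaf complexes are fine, CR-hypoellipticity identifies the kernel sheaves in degree $0$, the Poincar\'e lemma with matching regularity (Lemma \ref{poincareLp}) makes all three complexes resolutions of the sheaf $\Omega^rM$ through degree $s-1$, and the abstract de Rham comparison gives isomorphisms for $q\le s-1$ plus injectivity at $q=s$; for compact supports the paper additionally relies on the support-controlled refinements (i)--(ii) of Theorem \ref{isomplonge}, but your $c$-soft variant is in the same spirit.

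The genuine gap is the high-degree half. The ``mirror statement'' you invoke --- that a morphism of complexes of fine sheaves inducing isomorphisms on cohomology sheaves in degrees $\ge m'$ induces isomorphisms on global cohomology in degrees $\ge m'+1$ and an epimorphism in degree $m'$ --- is false as a general principle. The cohomology of global sections of a left-bounded complex of fine sheaves in degree $N$ is computed by a spectral sequence with $E_2^{a,b}=H^a(M,\mathcal{H}^b)$ involving \emph{all} cohomology sheaves $\mathcal{H}^b$ with $b\le N$, so the comparison only propagates from the bottom up. This matters decisively here: on an $s$-concave manifold the Poincar\'e lemma for $\opa_b$ fails, in every category, in the middle degrees $s\le q\le n-k-s$; hence the sheaves $\mathcal{H}^q$ are nonzero there, the inclusions are not known to induce isomorphisms on them, and the terms $H^{N-q}(M,\mathcal{H}^q)$ with $q$ in the middle range contaminate every global degree $N\ge n-k-s+1$. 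The paper states this obstruction explicitly (``the classical cohomological methods can no more be applied \dots\ even if the Poincar\'e lemma holds for such $q$'') and for precisely this reason proves the large-degree assertions by a non-sheaf-theoretic argument: Sambou's global regularization operators $R_{r,q}$ and $A_{r,q}$, with the homotopy formula $T=R_{r,q}T+A_{r,q+1}\opa_b T+\opa_b A_{r,q}T$ and the $L^p_{loc}$/smooth continuity of $A_{r,q}$, which yield surjectivity of $\Phi^p_q,\Psi^p_q$ for $n-k-s+1\le q\le n-k$ and injectivity for $n-k-s+2\le q\le n-k$ directly (Theorem \ref{isomgd}). The same obstruction invalidates your ``verbatim'' transfer to compact supports in large degrees, where the paper instead uses Corollary \ref{isomcompact2} (which, note, requires $p>1$). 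To repair your proof you would have to replace the mirror comparison by such a global homotopy argument.
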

Then we extend the Serre duality introduced in \cite{LaLedualiteCR}
to the $L^p_{loc}$ tangential Cauchy-Riemann complexes, $p>1$, and get some
density results and a theorem on the solvability in $L^p$ spaces of the tangential Cauchy-Riemann equation with compact support in bidegree $(0,1)$.

The last section is devoted to the $L^p$ theory. First using local $L^p$ results proved in \cite {LaSh}, we develop an $L^p$ Andreotti-Grauert theory for large degrees. Then using once again Serre duality, we solve the weak Cauchy problem in $L^p$ for the tangential Cauchy-Riemann operator.

 I would like to thank E. Russ who pointed me out the reference \cite{Ho&al} to study the compacity of operators between $L^p$ spaces.

\section{CR manifolds}\label{s1}

Let $\mb$ be a $\ci$-smooth, paracompact differential manifold, we
denote by $T\mb$ the tangent bundle of $\mb$ and by $T_\cb
\mb=\cb\otimes T\mb$ the complexified tangent bundle.

\begin{defin}\label{structure}
An \emph {almost CR structure} on $\mb$ is a subbundle $H_{0,1}\mb$ of
$T_\cb \mb$ such that $H_{0,1}\mb\cap \ol{H_{0,1}\mb}=\{0\}$.

If the almost CR structure is integrable, i.e. for all
$Z,W\in\Gamma(\mb,H_{0,1}\mb)$ then $[Z,W]\in\Gamma(\mb,H_{0,1}\mb)$ , it is called a \emph {CR structure}.

If $H_{0,1}\mb$ is a CR structure, the pair $(\mb,H_{0,1}\mb)$ is
called an \emph {abstract CR manifold}.

The \emph{CR-dimension} of $\mb$ is defined by CR-dim
$\mb=\rm{rk}_\cb~H_{0,1}\mb$.
\end{defin}

We set $H_{1,0}\mb=\ol{H_{0,1}\mb}$ and $H\mb=H_{1,0}\mb\oplus H_{0,1}\mb$. For $X\in H\mb$, let $X^{0,1}$ denote the projection of $X$ on $H_{0,1}\mb$ and $X^{1,0}$ its projection on $H_{1,0}\mb$.
\medskip

From now on let $n$ and $k$ be integers such that the CR manifold $\mb$ is of real dimension $2n-k$ and CR-dimension $n-k$.

\begin{defin}\label{morphisme}
Let $(\mb,H_{0,1}\mb)$ and $(\mb',H_{0,1}\mb')$ be two CR manifolds and $F~:~\mb\to\mb'$ a $\cc^1$-map. The map $F$ is called a \emph{CR map} if and only if for each $x\in\mb$, $dF((H_{0,1}\mb)_x)\subset (H_{0,1}\mb')_{F(x)}$.
\end{defin}

In particular, if $(\mb,H_{0,1}\mb)$ is a CR manifold and $f$ a complex valued function, then $f$ is a CR function if and only if for any $\ol L\in H_{0,1}\mb$ we have $\ol L f=0$.
\medskip

We denote by $H^{0,1}\mb$ the dual bundle $(H_{0,1}\mb)^*$ of $H_{0,1}\mb$.
Let $\Lambda^{0,q}\mb=\bigwedge^q(H^{0,1}\mb)$, then
$\cc^s_{0,q}(\mb)=\Gamma^s(\mb,\Lambda^{0,q}\mb)$ is called the
space of $(0,q)$-forms of class $\cc^s$, $s\geq 0$ on $\mb$.

If the almost CR structure is a CR structure, i.e. if it is
integrable, and if $s\geq 1$, then we can define an operator
\begin{equation}
\opa_b~:~\cc^s_{0,q}(\mb)\to \cc^{s-1}_{0,q+1}(\mb)
\end{equation}
called the \emph{tangential Cauchy-Riemann operator} by setting
$\opa_b f=df_{|_{H_{0,1}\mb\times\dots\times H_{0,1}\mb}}$. It
satisfies $\opa_b\circ\opa_b=0$ and a complex valued function $f$ is a CR function if and only if $\opa_b f=0$

Let us consider the quotient bundle $\wt T\mb=T_\cb\mb/H_{0,1}\mb$ and
the canonical bundle map $\pi~:~T_\cb\mb\to \wt T\mb$.

For $u\in\Gamma^\infty(\mb,H_{1,0}\mb)$, we define the tangential Cauchy-Riemann operator by setting
$$\opa_b u(\ol L)=\pi[\ol L, W],$$
if $\ol L\in\Gamma(\mb,H_{0,1}\mb)$ and $W\in\Gamma^\infty(\mb,T_\cb\mb)$ satisfies $\pi(W)=u$. By the integrability of the CR structure, this does not depend on the choice of $W$ with $\pi(W)=u$.

Let us consider the vector bundle $\Lambda^{p,q}\mb=\Lambda^p(\wt T\mb)^*\otimes \Lambda^{0,q}\mb$ of $(p,q)$-forms on $\mb$ and set $\ec^{p,q}(\mb)=\Gamma^\infty(\mb,\Lambda^{p,q}\mb)$. We can define the tangential Cauchy-Riemann operator on $\ec^{p,q}(\mb)$ by setting
for $u\in\ec^{p,0}(\mb)$
$$(\opa_b u(\ol L))(L_1,\dots,L_p)=\ol L(u(L_1,\dots,L_p))-\sum_{i=1}^p u(L_1,\dots,L_{i-1},\ol L (L_i),L_{i+1},\dots,L_p),$$
for any $\ol L\in\Gamma(\mb,H_{0,1}\mb)$ and $L_i\in\Gamma(\mb,\wt T\mb)$, $1\leq i\leq p$, where $\ol L (L_i)=\opa_b L_i(\ol L)$ is defined as above, and
\begin{align*}
(\opa_b\theta(\ol L_1,\dots,\ol L_{q+1}))&(L_1,\dots,L_p)=\sum_{j=1}^{q+1}(-1)^{j+1}(\opa_b(\theta(\ol L_1,\dots,\widehat{\ol L_j},\dots,\ol L_{q+1}))(\ol L_j))(L_1,\dots,L_p)\\&+\sum_{j<k}(-1)^{j+k}\theta([\ol L_j,\ol L_k],\ol L_1,\dots,\widehat{\ol L_j},\dots,\widehat{\ol L_k},\dots,\ol L_{q+1})(L_1,\dots,L_p),
\end{align*}
if $\theta\in \ec^{p,q}(\mb)$, $\ol L_1,\dots,\ol L_{q+1}\in \Gamma(\mb,H_{0,1}\mb)$ and $L_i\in\Gamma(\wt T\mb)$, $1\leq i\leq p$.

So we get a differential complex $(\ec^{p,\bullet}(\mb),\opa_b)$ called the tangential Cauchy-Riemann complex for smooth forms. We denote by $H^{p,q}_\infty(\mb)$ the associated cohomology groups.
\medskip

Let $\dc^{p,q}(\mb)$ be the space of compactly supported elements of $\ec^{p,q}(\mb)$. We put on $\ec^{p,q}(\mb)$ the topology of uniform convergence on compact sets of the sections and all their derivatives. Endowed with this topology $\ec^{p,q}(\mb)$ is a Fréchet-Schwartz space.
Let $K$ be a compact subset of $\mb$ and $\dc_K^{p,q}(\mb)$ the closed subspace of $\ec^{p,q}(\mb)$ of forms with support in $K$ endowed with the induced topology. Choose $(K_n)_{n\in\nb}$ an exhausting sequence of compact subsets of $\mb$. Then $\dc^{p,q}(\mb)=\cup_{n\geq 0} \dc_{K_n}^{p,q}(\mb)$. We put on $\dc^{p,q}(\mb)$ the strict inductive limit topology defined by the FS-spaces $\dc_{K_n}^{p,q}(\mb)$. By restricting the tangential Cauchy-Riemann operator to $\dc^{p,q}(\mb)$ we get a new complex $(\dc^{p,\bullet}(\mb),\opa_b)$, whose cohomology groups are denoted $H^{p,q}_{c,\infty}(\mb)$.

The space of currents on $\mb$ of bidimension $(p,q)$ or bidegree $(n-p,n-k-q)$ is the dual of the space $\dc^{p,q}(\mb)$ and is denoted either by $\dc'_{p,q}(\mb)$, if we use the graduation given by the bidimension, or by $\dc'^{n-p,n-k-q}(\mb)$, if we use the graduation given by the bidegree. An element of $\dc'^{n-p,n-k-q}(\mb)$ can be identified with a distribution section of $\Lambda^{n-p,n-k-q}\mb$. We define the tangential Cauchy-Riemann operator on $\dc'^{n-p,n-k-q}(\mb)$ as the transpose map of the $\opa_b$ operator from $\dc^{p,q-1}(\mb)$ into $\dc^{p,q}(\mb)$, we still denote it by $\opa_b$ since its restriction to $\ec^{n-p,n-k-q}(\mb)$ coincides with the $\opa_b$ operator on smooth forms and we get a new complex $(\dc'^{n-p,\bullet}(\mb),\opa_b)$, the tangential Cauchy-Riemann complex for currents, which is the dual complex of the complex $(\dc^{p,\bullet}(\mb),\opa_b)$ (see section 2 in \cite{LaLedualiteCR}). The cohomology groups are denoted by $H^{n-p,n-k-q}_{cur}(\mb)$. Finally the dual of $\ec^{p,q}(\mb)$ denoted either by $\ec'_{p,q}(\mb)$ or $\ec'^{n-p,n-k-q}(\mb)$ is the space of currents on $\mb$ of bidimension $(p,q)$ or bidegree $(n-p,n-k-q)$ with compact support. Restricting the $\opa_b$ operator from $\dc'^{n-p,n-k-q}(\mb)$ to $\ec'^{n-p,n-k-q}(\mb)$, we get a fourth complex $(\ec'^{n-p,\bullet}(\mb),\opa_b)$, whose cohomology groups are denoted by $H^{n-p,n-k-q}_{c,cur}(\mb)$. This complex is the dual complex of the complex $(\ec^{p,\bullet}(\mb),\opa_b)$ (see section 2 in \cite{LaLedualiteCR}).

\medskip

The annihilator $H^0\mb$ of $H\mb=H_{1,0}\mb\oplus H_{0,1}\mb$ in
$T_\cb^*\mb$ is called the \emph{characteristic bundle} of $\mb$.
Given $p\in\mb$, $\omega\in H^0_p\mb$ and $X,Y\in H_p\mb$, we choose
$\wt\omega\in\Gamma(\mb,H^0\mb)$ and $\wt X,\wt Y\in
\Gamma(\mb,H\mb)$ with $\wt\omega_p=\omega$, $\wt X_p=X$ and $\wt
Y_p=Y$. Then $d\wt\omega(X,Y)=-\omega([\wt X,\wt Y])$. Therefore we can
associate to each $\omega\in H^0_p\mb$ an hermitian form
\begin{equation}\label{levi}
L_\omega(X)=-i\omega([\wt X,\ol{\wt X}])
\end{equation}
on $(H_{1,0})_p\mb$. This is called the \emph{Levi form} of $\mb$ at
$\omega\in H^0_p\mb$.

In the study of the $\opa_b$-complex two important geometric
conditions were introduced for CR manifolds of real dimension $2n-k$
and CR-dimension $n-k$. The first one by Kohn in the hypersurface
case, $k=1$, the condition Y(q), the second one by Henkin in
codimension $k$, $k\ge 1$, the $q$-concavity.

A CR manifold $\mb$ satisfies Kohn's condition $Y(q)$ at a point
$p\in\mb$ for some $0\le q\le n-1$, if the Levi form of $\mb$ at $p$
has at least $\max(n-q,q+1)$ eigenvalues of the same sign or at
least $\min(n-q,q+1)$ eigenvalues of opposite signs.

A CR manifold $\mb$ is said to be \emph{$q$-concave} at $p\in\mb$
for some $0\le q\le n-k$, if the Levi form $L_\omega$ at $\omega\in
H^0_p\mb$ has at least $q$ negative eigenvalues on $H_p\mb$ for
every nonzero $\omega\in H^0_p\mb$.

In \cite{ShWa} the condition Y(q) is extended to arbitrary
codimension.
\begin{defin}\label{y(q)}
An abstract CR manifold is said to satisfy \emph{condition Y(q)} for
some $1\le q\le n-k$ at $p\in\mb$ if the Levi form $L_\omega$ at
$\omega\in H^0_p\mb$ has at least $n-k-q+1$ positive eigenvalues or
at least $q+1$ negative eigenvalues on $H_p\mb$ for every nonzero
$\omega\in H^0_p\mb$.
\end{defin}

Note that in the hypersurface type case, i.e. $k=1$, this condition is
equivalent to the classical condition Y(q) of Kohn for
hypersurfaces and in particular if the CR structure is strictly peudoconvex,
i.e. the Levi form is positive definite or negative definite,
condition Y(q) holds for all $1\leq q<n-1$. Moreover, if $\mb$ is
$q$-concave at $p\in\mb$, then
$q\le (n-k)/2$ and condition Y(r) is satisfied at $p\in\mb$ for any
$0\le r\le q-1$ and $n-k-q+1\le r\le n-k$.

\begin{defin}\label{plongement}
Let $(\mb, H_{0,1}\mb)$ be an abstract CR manifold, $X$ be  a complex
manifold  and $F~:~\mb\to X$ be an embedding of class $\ci$, then
$F$ is called a \emph{CR embedding} if $dF(H_{0,1}\mb)$ is a subbundle
of the bundle $T_{0,1}X$ of the holomorphic vector fields on $X$ and
$dF(H_{0,1}\mb)=T_{0,1}X\cap T_\cb F(\mb)$.
\end{defin}

Let $F$ be a CR embedding of an abstract CR manifold into a complex
manifold $X$ and set $M=F(\mb)$, then $M$ is a CR manifold with the
CR structure $H_{0,1}M=T_{0,1}X\cap T_\cb M$.

Let $U$ be a coordinate domain in $X$, then
$F_{|_{F^{-1}(U)}}=(f_1,\dots ,f_N)$, with $N=\rm{dim}_\cb X$, and
$F$ is a CR embedding if and only if, for all $1\leq j\leq N$,
$\opa_b f_j=0$.

A CR embedding is called
\emph{generic} if $\rm{dim}_\cb X- \rm{rk}_\cb H_{0,1}M=\rm{codim}_\rb
M$.
\medskip

Most of the known results on CR manifolds concern CR manifolds which are locally embeddable at each point in $\cb^n$ and $s$-concave, $s\geq 1$. Moreover by a theorem of C.D. Hill and M. Nacinovich (\cite{HiNa1}, Proposition 3.1), each  $s$-concave, $s\geq 1$, CR manifold, which is locally embeddable at each point, can be generically embedded in a complex manifold. Consequently in most of the cases assuming that the abstract CR manifold $\mb$ is in fact a generic CR submanifold $M$ of a complex manifold $X$ is not a restriction.

\section{$L^p_{loc}$-cohomology for the tangential Cauchy-Riemann operator}

Let $\mb$ be a $\ci$-smooth CR manifold of real dimension $2n-k$ and CR-dimension $n-k$. We equip $\mb$ with an hermitian metric such that $H_{1,0}\mb~\bot~ H_{0,1}\mb$. If $p$ is a real number such that $p\geq 1$, let us consider the subspace $(L^p_{loc})^{r,q}(\mb)$ of $\dc'^{r,q}(\mb)$ consisting in the $L^p_{loc}$-sections of the vector bundle $\Lambda^{r,q}\mb$, $0\leq r\leq n$ and $0\leq q\leq n-k$, endowed with the topology of $L^p$ convergence on compact subsets of $\mb$. Taking the restriction to $(L^p_{loc})^{r,q}(\mb)$ of the tangential Cauchy-Riemann operator for currents we get an unbounded operator whose domain of definition is the set of forms $f$ with $L^p_{loc}$ coefficients such that $\opa_b f$ has also $L^p_{loc}$ coefficients and since $\opa_b\circ\opa_b=0$ we get a complex of unbounded operators $\big((L^p_{loc})^{r,\bullet}(\mb),\opa_b\big)$. We denote by $H^{r,q}_{L^p_{loc}}(\mb)$ the associated cohomology groups.

Let us consider the dual space of $(L^p_{loc})^{r,q}(\mb)$, when $1<p<\infty$. Since the injection of $\ec^{r,q}(\mb)$ into $(L^p_{loc})^{r,q}(\mb)$ has a dense image, the dual space of $(L^p_{loc})^{r,q}(\mb)$ is included in $\ec'^{n-r,n-k-q}(\mb)$ the space of $(n-r,n-k-q)$-currents with compact support on $\mb$, moreover by the Hölder inequality it coincides with the space $(L^{p'}_c)^{n-r,n-k-q}(\mb)$ of $L^{p'}$ forms with compact support in $\mb$ with $p'$ such that $\frac{1}{p}+\frac{1}{p'}=1$ and the duality pairing is defined by $<f,g>=\int_M f\wedge g$. Taking again the restriction to $(L^{p'}_c)^{r,q}(\mb)$ of the tangential Cauchy-Riemann operator for currents, we get a complex of unbounded operators $\big((L^{p'}_c)^{n-r,\bullet}(\mb),\opa_b\big)$. We denote by $H^{p,q}_{c,L^{p'}}(\mb)$ the associated cohomology groups. The complexes $\big((L^p_{loc})^{r,\bullet}(\mb),\opa_b\big)$ and $\big((L^{p'}_c)^{n-r,\bullet}(\mb),\opa_b\big)$, with $p'$ such that $\frac{1}{p}+\frac{1}{p'}=1$, are clearly dual to each other.

\subsection{ Dolbeault isomorphism}
Let $\mb$ be a $\ci$-smooth abstract CR manifold of real dimension $2n-k$, $k>0$, and CR-dimension $n-k$.

As in the complex case some relations between the smooth $\opa_b$-cohomology groups of $\mb$, the $\opa_b$-cohomology groups in the sense of currents of $\mb$ and the $L^p_{loc}$ $\opa_b$-cohomology groups of $\mb$ will result from a CR analog of the Dolbeault isomorphism. Let $\Omega^r\mb$ be the sheaf of CR $(r,0)$-forms, $0\leq r\leq n$ on $\mb$. If we can prove that the complexes $(\ec^{r,\bullet}(\mb),\opa_b)$, $(\dc'^{r,\bullet}(\mb),\opa_b)$ and $\big((L^p_{loc})^{r,\bullet}(\mb),\opa_b\big)$ are resolutions of the sheaf $\Omega^r\mb$, these groups will be isomorphic to the sheaf cohomology groups $H^\bullet(\mb,\Omega^r\mb)$. This will be true if $\mb$ is CR-hypoelliptic, which means that all CR $(r,0)$-forms on an open subset of $\mb$ are $\ci$-smooth and if the Poincaré lemma for the $\opa_b$ operator holds in the $\ci$-smooth category, the current category and the $L^p_{loc}$ category in degree $q$, $1\leq q\leq t(\mb)$, for some integer $t(\mb)$ depending on the geometry of $\mb$. In that case for $1\leq q\leq t(\mb)$, one gets
$$H^q(\mb,\Omega^r\mb)\sim H^{r,q}_\infty(\mb)\sim H^{r,q}_{cur}(\mb)\sim H^{r,q}_{L^p_{loc}}(\mb).$$

Unfortunately results on CR-hypoellipticity or on the validity of the Poincaré lemma for the $\opa_b$ operator  are only proven under the hypothesis that $\mb$ is locally embeddable and satisfies some concavity conditions. Therefore it comes from the remark at the end of section \ref{s1} that it is not a real restriction to assume that $\mb$ is generically embedded in a complex manifold.

From now on we assume that  $\mb=M$ is a $\ci$-smooth, generic CR submanifold of real codimension $k$ in a complex manifold $X$ of complex dimension $n$.

In that case the $\opa_b$ operator is CR-hypoelliptic as soon as $M$
is $1$-concave (cf. \cite{BaLa}, Corollaire 0.2) and the Poincaré
lemma holds for the $\opa_b$ operator in the smooth and in the current
categories in bidegree $(r,q)$, $0\leq r\leq n$ and $1\leq q\leq s-1$
or $n-k-s+1\leq q\leq n-k$, if $M$ is $s$-concave, $s\geq 2$ (cf. \cite{Hecras}, \cite{Nac}, \cite{NacVa}, \cite{BaLa}).

Moreover in this setting, the Poincaré lemma holds also in the $L^p_{loc}$ category,
\begin{lem}\label{poincareLp}
Let $M$ be a $\ci$-smooth, generic, $s$-concave, $s\geq 1$, CR submanifold of real codimension $k$ of a complex manifold $X$ of complex dimension $n$ and $p\geq 1$ a real number.
For each $x\in M$, there exists an open neighborhood $U$ of $x$ and an open neighborhood $V\subset\subset U$ of $x$ such that for any $\opa_b$-closed form $f\in(L^p_{loc})^{r,q}(U)$, $0\leq r\leq n$ and $1\leq q\leq s-1$ or $n-k-s+1\leq q\leq n-k$, there exists $g\in(L^p_{loc})^{r,q-1}(V)$ such that $f=\opa_b g$ on $V$.
\end{lem}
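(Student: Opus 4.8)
The plan is to reduce the statement to the boundedness on $L^p$ of the explicit local solution operators for $\opa_b$ that already underlie the Poincar\'e lemma in the smooth and current categories. On an $s$-concave generic CR submanifold the references \cite{Hecras,Nac,NacVa,BaLa} produce, for every $x\in M$ and every bidegree $(r,q)$ with $1\le q\le s-1$ or $n-k-s+1\le q\le n-k$, neighborhoods $V\subset\subset U$ of $x$ and integral operators $A_q$, with kernels of Henkin--Ramirez type built from the local embedding and from the supporting functions furnished by the concavity, such that the homotopy formula
$$f=\opa_b A_q f+A_{q+1}\opa_b f$$
holds on $V$ for every smooth form $f$ on $U$, where $A_q$ sends $(r,q)$-forms to $(r,q-1)$-forms. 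The operators $A_q$ and $A_{q+1}$ are weakly singular integral operators whose kernels have an integrable singularity along the diagonal.

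First I would establish that each $A_q$ extends to a bounded operator from $L^p(U)$ into $L^p(V)$ for $p\ge 1$. This is precisely the type of local $L^p$ estimate proved in \cite{LaSh}; alternatively it follows from a Schur test (or Young's inequality), since the kernel singularity is uniformly integrable in each variable. With this in hand, the homotopy formula, valid a priori only for smooth $f$, extends by continuity and density to every $f\in (L^p_{loc})^{r,q}(U)$.

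Next, given an $\opa_b$-closed form $f\in (L^p_{loc})^{r,q}(U)$, I would regularize it locally in coordinates, $f_\nu=f*\rho_\nu$, so that $f_\nu\to f$ in $L^p_{loc}$. Because $\opa_b$ has smooth variable coefficients, Friedrichs' commutator lemma gives $\opa_b f_\nu=(\opa_b f)_\nu+[\opa_b,\rho_\nu *]f\to \opa_b f=0$ in $L^p_{loc}$. Applying the homotopy formula to the smooth forms $f_\nu$ yields $\opa_b(A_q f_\nu)=f_\nu-A_{q+1}\opa_b f_\nu$ on $V$. Passing to the limit, the boundedness of $A_q$ gives $A_q f_\nu\to A_q f=:g$ in $L^p(V)$, while the boundedness of $A_{q+1}$ gives $A_{q+1}\opa_b f_\nu\to 0$ in $L^p(V)$; since $f_\nu\to f$, we obtain $\opa_b g=f$ in the weak sense on $V$ with $g\in (L^p_{loc})^{r,q-1}(V)$, which is the assertion.

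The main obstacle is the first step, the $L^p$ continuity of the solution operators. In the complex case this was carried out in \cite{LaLp}, but in the CR setting the kernels are substantially more intricate: they depend on the local generic embedding and on the family of strictly pseudoconvex supporting functions produced by $s$-concavity, and the two degree ranges $1\le q\le s-1$ and $n-k-s+1\le q\le n-k$ require separate, dual kernel constructions. Verifying that these kernels remain weakly singular with uniformly integrable singularity, so that Young's inequality applies, is the delicate point, and it is exactly the content of the local estimates of \cite{LaSh} that I would invoke.
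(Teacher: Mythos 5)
There is a genuine gap, and it sits exactly at the step you yourself flag as the main obstacle. Your argument needs a local homotopy formula $f=\opa_b A_q f+A_{q+1}\opa_b f$ that is simultaneously (a) valid for \emph{every} smooth form on $U$, with no support restriction, and (b) implemented by weakly singular integral operators, singular only on the diagonal, bounded from $L^p(U)$ into $L^p(V)$. No such formula is supplied by the references you invoke. The Bochner--Martinelli type formula of \cite{BaLa} (the one whose kernels are of weak type $\tfrac{2n}{2n-1}$ by Lemma 4.2.2 of \cite{LaSh}, and the one the paper actually uses) holds only for \emph{compactly supported} smooth forms; the Henkin-type formulas valid for forms smooth up to the boundary of a small domain contain boundary integrals, and a boundary-integral operator admits no bounded extension to $L^p$ because an $L^p$ form has no trace on a hypersurface. (A secondary point: the $L^p$-bounded solution operators of \cite{LaSh} concern only the range $n-k-s+1\leq q\leq n-k$; for $1\leq q\leq s-1$ the only available kernels are the compactly supported ones of \cite{BaLa}.) Note also that a boundary-free, $L^p$-bounded homotopy formula on $U$ would give a \emph{bounded linear} solution operator for all $\opa_b$-closed $L^p$ forms, which is strictly stronger than the lemma; this is a hint that such a formula cannot simply be quoted.

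Concretely, the failure appears in your limit argument. To apply the compact-support formula to the mollified forms $f_\nu$ you must first cut off, replacing $f_\nu$ by $\chi f_\nu$ with $\chi\in\dc(U)$, $\chi\equiv 1$ on a neighborhood $\wt V$ of $x$. Then $\opa_b(\chi f_\nu)=\chi\opa_b f_\nu+\opa_b\chi\wedge f_\nu$, and while $A_{q+1}(\chi\opa_b f_\nu)\to 0$ by Friedrichs' lemma and $L^p$-boundedness, the term $A_{q+1}(\opa_b\chi\wedge f_\nu)$ does \emph{not} tend to zero: it converges to $A_{q+1}(\opa_b\chi\wedge f)$. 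The identity you obtain in the limit is therefore
\begin{equation*}
\chi f=\opa_b A_q(\chi f)+A_{q+1}(\opa_b\chi\wedge f),
\end{equation*}
not $f=\opa_b A_q f$ near $x$. The paper's proof is devoted precisely to this residual term: since the kernels are singular only on the diagonal and $\opa_b\chi$ is supported in $U\setminus\wt V$, the form $A_{q+1}(\opa_b\chi\wedge f)$ is $\ci$-smooth and $\opa_b$-closed on $\wt V$, and one invokes the Poincar\'e lemma for \emph{smooth} forms (available in these bidegrees by $s$-concavity) to write it as $\opa_b h$ on a smaller neighborhood $V$, so that $g=A_q(\chi f)+h$ solves the problem. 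This reduction of the $L^p$ statement to the smooth Poincar\'e lemma is the essential idea missing from your proposal; without it, or without an actual construction of a boundary-free homotopy formula valid for non-compactly-supported forms, the proof does not close.
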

\begin{proof}
Let $U$ be an open neighborhood of $x$ such that the Bochner-Martinelli type formula from \cite{BaLa} holds. We get from Theorem 0.1 in \cite{BaLa} that there exists some continuous operators $\widehat R_M^{r,\wt q}$ from $\dc^{r,\wt q}(U)$ into $\ec^{r,\wt q-1}(U)$, $0\leq r\leq n$ and $1\leq\wt q\leq s$ or $n-k-s+1\leq\wt q\leq n-k$, such that, for any $\ci$-smooth $(r,q)$-form $u$ with compact support in $U$, $0\leq r\leq n$ and $1\leq q\leq s-1$ or $n-k-s+1\leq q\leq n-k$,
\begin{equation}\label{BM}
u=\opa_b \widehat R_M^{r,q} u+\widehat R_M^{r,q+1}\opa_b u.
\end{equation}
The operators $\widehat R_M^{r,q}$ are integral operators whose kernel $R_M^{r,q}$ are of weak type $\frac{2n}{2n-1}$ (cf. \cite{LaSh}, Lemma 4.2.2). Hence the operators $\widehat R_M^{r,q}$ extend as continuous operators from $(L^p_c)^{r,q}(U)$ into $(L^p_{loc})^{r,q-1}(U)$ and \eqref{BM} still holds if $u\in(L^p_c)^{r,q}(U)\cap Dom(\opa_b)$.

Let $f\in(L^p_{loc})^{r,q}(U)$ a $\opa_b$-closed form on $U$ and $\chi$ a $\ci$-smooth, positive function on $M$ with compact support in $U$ and constant equal to $1$ on some neighborhood $\wt V\subset\subset U$ of $X$, then
\begin{align*}
\chi f&=\opa_b \widehat R_M^{r,q} (\chi f)+\widehat R_M^{r,q+1}\opa_b (\chi f)\\
&=\opa_b \widehat R_M^{r,q} (\chi f)+\widehat R_M^{r,q+1}((\opa_b \chi)\wedge f).
\end{align*}
Since the singularities of the kernels $R_M^{r,q}$ are concentrated on the diagonal of $U\times U$ and the support of $\opa_b\chi$ is contained in $U\setminus \wt V$, the differential form $\widehat R_M^{r,q+1}((\opa_b \chi)\wedge f)$ is a smooth $\opa_b$-closed form on $\wt V$. By the Poincaré lemma for the $\opa_b$ operator for $\ci$-smooth forms there exists an open neighborhood $V\subset\subset \wt V$ of $x$ and a differential form $h\in\ec^{r,q-1}(V)$ such that $\widehat R_M^{r,q+1}((\opa_b \chi)\wedge f)=\opa_b h$. If we set $g=\widehat R_M^{r,q} (\chi f)+h$ then $g\in(L^p_{loc})^{r,q-1}(V)$ and $f=\opa_b g$ on $V$.
\end{proof}

\begin{thm}\label{isomplonge}
Let $M$ be a $\ci$-smooth, generic, $s$-concave, $s\geq 1$, CR submanifold of real codimension $k$ of a complex manifold $X$ of complex dimension $n$ and $p\geq 1$ a real number.

If $0\leq r\leq n$ and $0\leq q\leq s-1$, the natural maps
$$\Phi_q^p~:~H^{r,q}_\infty(M)\to H^{r,q}_{L^p_{loc}}(M)\quad {\rm and}\quad  \Psi_q^p~:~H^{r,q}_{L^p_{loc}}(M)\to  H^{r,q}_{cur}(M)$$
are isomorphisms, still called Dolbeault isomorphisms.

More precisely for all $0\leq r\leq n$ we have:

(i) If $1\leq q\leq s-1$, for any $f\in(L^p_{loc})^{r,q}(M)$ with $\opa_bf=0$ and any neighborhood $U$ of the support of $f$, there exists $g\in(L^p_{loc})^{r,q-1}(M)$ with support in $U$ such that $f-\opa_b g\in\ec^{r,q}(M)$.

(ii) If  $1\leq q\leq s$ and if $f\in(L^p_{loc})^{r,q}(M)$ satisfies $f=\opa_b S$ for some current $S\in\dc'^{r,q-1}(M)$, then, for any neighborhood $U$ of the support of $S$, there exists $g\in(L^p_{loc})^{r,q-1}(M)$ with support in $U$ such that $f=\opa_b g$ on $M$.
\end{thm}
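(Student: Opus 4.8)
The plan is to establish the theorem by proving, as the key technical ingredients, the two refined statements (i) and (ii), and then deducing the isomorphism properties of $\Phi_q^p$ and $\Psi_q^p$ from them. The underlying strategy is the standard abstract-sheaf-theoretic one: since $M$ is $s$-concave with $s\geq 1$, it is CR-hypoelliptic, so each of the complexes $(\ec^{r,\bullet}(M),\opa_b)$, $\big((L^p_{loc})^{r,\bullet}(M),\opa_b\big)$ and $(\dc'^{r,\bullet}(M),\opa_b)$ begins with the sheaf $\Omega^r M$ of CR $(r,0)$-forms; and by the smooth, current, and (newly established) $L^p_{loc}$ Poincar\'e lemmas in the relevant degree range, these complexes are fine resolutions of $\Omega^r M$ in degrees $1\leq q\leq s-1$. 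Hence all three cohomology groups are isomorphic to the sheaf cohomology $H^q(M,\Omega^r M)$, and the natural maps $\Phi_q^p$ and $\Psi_q^p$ realize these isomorphisms. To get the sharper statements (i) and (ii) I would not merely invoke abstract nonsense but carry out an explicit smoothing/regularization argument.

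First I would prove (i). Given $f\in(L^p_{loc})^{r,q}(M)$ with $\opa_b f=0$ and a neighborhood $U$ of $\supp f$, the goal is to modify $f$ by an exact $L^p_{loc}$ form supported in $U$ so as to make it smooth. The plan is to use a locally finite cover of $M$ by the neighborhood pairs $V_i\subset\subset U_i$ furnished by Lemma \ref{poincareLp}, together with a subordinate partition of unity, and to run the usual inductive patching argument of the Dolbeault-type isomorphism proof: on each $U_i$ write $f=\opa_b g_i$ locally, compare the local solutions on overlaps where their differences are $\opa_b$-closed in degree $q-1$, and use the Poincar\'e lemma one degree lower to glue. Since the local solutions $g_i$ produced by Lemma \ref{poincareLp} lie in $L^p_{loc}$, and since the corrections are supported in $U$ by a careful choice of cut-offs, the assembled correction $g$ stays in $(L^p_{loc})^{r,q-1}(M)$ with $\supp g\subset U$, and $f-\opa_b g$ becomes a global smooth form, which is the assertion of (i). The support control is exactly what gives, as a by-product, the surjectivity of $\Phi_q^p$ (every $L^p_{loc}$ cohomology class contains a smooth representative) and, combined with (ii), its injectivity.

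Next I would prove (ii). Here $f\in(L^p_{loc})^{r,q}(M)$ is the image under $\opa_b$ of a current $S\in\dc'^{r,q-1}(M)$, and I must regularize $S$ to an $L^p_{loc}$ primitive $g$ with controlled support. The plan is again to localize with the pairs $V\subset\subset U$ and to smooth $S$ degree by degree: on a small neighborhood, $\opa_b S$ equals the given $L^p_{loc}$ form $f$, and by applying the homotopy formula \eqref{BM} for the operators $\widehat R_M^{r,q}$ from \cite{BaLa}---which, being of weak type $\tfrac{2n}{2n-1}$, map $L^p_c$ into $L^p_{loc}$---one replaces $S$ by an $L^p_{loc}$ form modulo a $\opa_b$-closed smooth error, on which the current/smooth Poincar\'e lemma in degree $q-1$ applies (this is where the extended range $1\leq q\leq s$ is used, since the local solvability of Lemma \ref{poincareLp} was stated up to degree $s$ through the operators $\widehat R_M^{r,\wt q}$). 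Iterating over a locally finite cover and keeping all corrections supported inside $U$ yields the desired $g\in(L^p_{loc})^{r,q-1}(M)$ with $\supp g\subset U$ and $\opa_b g=f$ globally.

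The main obstacle I expect is the support bookkeeping in the gluing steps, not the local solvability itself. Locally everything is handled by Lemma \ref{poincareLp} and by the smooth and current Poincar\'e lemmas quoted before it; the delicate point is to run the global patching so that every intermediate primitive and every correction term remains supported in the prescribed neighborhood $U$ while staying in $L^p_{loc}$. Because the kernels $R_M^{r,q}$ have singularities concentrated on the diagonal, multiplying by cut-offs whose $\opa_b$-derivatives are supported away from the relevant points keeps the error terms smooth, exactly as in the proof of Lemma \ref{poincareLp}; organizing these cut-offs coherently over a locally finite cover, and verifying that the resulting series of corrections converges in $L^p_{loc}$, is the technical heart of the argument and the step I would write out most carefully.
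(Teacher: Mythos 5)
Your treatment of the isomorphism statements and of assertion (i), for $1\leq q\leq s-1$, is essentially the paper's own route: for that range the paper simply invokes the complex-case arguments (Propositions 1.1, 1.2 and 1.4 of \cite{LaLp}), which are exactly the patching/fine-resolution arguments you describe, with Lemma \ref{poincareLp} as local input. The genuine gap is in assertion (ii), and precisely at the degree $q=s$ that makes (ii) stronger than (i). Your plan is to apply the homotopy formula \eqref{BM}, i.e. the operators $\widehat R_M^{r,\wt q}$, to the primitive $S$ after a cut-off. But $S$ is an arbitrary current: the $\widehat R_M^{r,\wt q}$ are integral operators with kernels singular on the diagonal, defined on $\dc^{r,\wt q}(U)$ and extended only to $(L^p_c)^{r,\wt q}(U)\cap \Dom(\opa_b)$; an expression like $\widehat R_M^{r,q-1}(\chi S)$, which your regularization step implicitly requires, is simply not defined, since one cannot pair a distribution of positive order against a kernel that is merely of weak type $\frac{2n}{2n-1}$. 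Nor can you sidestep this through a Poincar\'e lemma in degree $s$: it fails in all categories for $s\leq q\leq n-k-s$, which is exactly why $q=s$ needs a different mechanism than degrees $q\leq s-1$.

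The missing idea is duality. Because $M$ is $s$-concave, the homotopy formula \eqref{BMq} also holds in the complementary large degrees $n-k-s+1\leq\wt q\leq n-k$; the paper transposes the operators $\widehat R_M^{n-r,n-k-q+1}$ to obtain operators $\wt R_M^{r,q}:\ec'^{r,q}(U)\to\dc'^{r,q-1}(U)$, $1\leq q\leq s$, which genuinely act on compactly supported currents, and thereby a homotopy formula \eqref{BMcurant} for currents of bidegree $(r,q)$, $0\leq q\leq s-1$. Applying it to $\chi S$ (bidegree $(r,s-1)$) and taking $\opa_b$ gives $\opa_b(\chi S)=\opa_b\big(\wt R_M^{r,s}\opa_b(\chi S)\big)$; the splitting $\opa_b(\chi S)=\chi f+\opa_b\chi\wedge S$ then shows that $\wt R_M^{r,s}\opa_b(\chi S)$ is $L^p_{loc}$ near $x$, since $\chi f$ is handled by the $L^p$-continuity of $\wt R_M^{r,s}$ while $\opa_b\chi\wedge S$ is supported away from a neighborhood of $x$, so its image is smooth there (diagonal singularity of the kernel). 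Two points of this scheme have no counterpart in your proposal: the construction of current-level operators by transposition from the large-degree range (this is where $s$-concavity enters a second time), and the verification that these transposed operators map $(L^p_c)$ into $(L^p_{loc})$ --- by duality when $p>1$, but by a direct weak-type kernel estimate when $p=1$, a case your argument also leaves untouched even though the theorem allows it.
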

\begin{proof}
For $0\leq q\leq s-1$, the proof works as in the complex case, see Propositions 1.1, 1.2 and 1.4 in \cite{LaLp}.

Let us consider the case $q=s$. Following the proof of Theorem 5.1 in
\cite{LaLedualiteCR}, it is sufficient to prove that for each $x\in
M$, there exists an open neighborhood $V$ of $x$ such that for any
$(r,s-1)$-current $T$ on $M$ with $\opa_b T\in(L^p_{loc})^{r,s}(M)$,
there exists $g\in(L^p_{loc})^{r,s-1}(V)$ such that $\opa_b T=\opa_b
g$ on $V$. Let $U$ be an open neighborhood of $x$ such that the
Bochner-Martinelli type formula from \cite{BaLa} holds. For $0\leq
r\leq n$ and $n-k-s+1\leq\wt q\leq n-k$, there exists some continuous
operators $\widehat R_M^{r\wt q}$ from $\dc^{r,\wt q}(U)$ into
$\ec^{r,\wt q-1}(U)$, such that, for any $\ci$-smooth $(r,\wt q)$-form $u$ with compact support in $U$,, $0\leq r\leq n$ and $n-k-s+1\leq\wt q\leq n-k$,
\begin{equation}\label{BMq}
u=\opa_b \widehat R_M^{r,\wt q} u+\widehat R_M^{r,\wt q+1}\opa_b u.
\end{equation}
Let us denote by $\wt R_M^{r,q}$ from $\ec'^{r,q}(U)$ into
$\dc'^{r,q-1}(U)$ the transpose of $\widehat R_M^{n-r,n-k-q+1}$,
$0\leq r\leq n$ and $1\leq q\leq s$, then by duality, since the $\opa_b$ operator for currents is the transpose of the $\opa_b$ operator for $\ci$-smooth forms, we have for any $(r,q)$-current $T$ with compact support in $U$, $0\leq r\leq n$ and $0\leq q\leq s-1$
\begin{equation}\label{BMcurant}
T=\opa_b \wt R_M^{r,q} T+\wt R_M^{r,q+1}\opa_b T.
\end{equation}
where we have set $\wt R_M^{r,0}=0$.
Moreover since, by the proof of Lemma \ref{poincareLp}, the operators
$\widehat R_M^{r,\wt q}$ extend as continuous operators from
$(L^p_c)^{r,\wt q}(U)$ into $(L^p_{loc})^{r,\wt q-1}(U)$ for any
$p\geq 1$, their transpose operators $\wt R_M^{r,q}$ define continuous
operators from $(L^p_c)^{r,q}(U)$ into $(L^p_{loc})^{r,q-1}(U)$ for
any $p>1$. For $p=1$ note that $\wt R^{r,q}_M$ is still an integral
operator whose kernel is of weak type $\frac{2n}{2n-1}$ hence
continuous from $(L^1_c)^{r,q}(U)$ into $(L^1_{loc})^{r,q-1}(U)$.

Assume $T$ is a $(r,s-1)$-current on $M$ with $\opa_b T\in(L^p_{loc})^{r,s}(M)$. Let $V\subset\subset U$ be an open neighborhood of $x$ and $\chi$ a $\ci$-smooth, positive function on $M$ with compact support in $U$ and constant equal to $1$ on some neighborhood of $V$, then we can apply \eqref{BMcurant} to $\chi T$ and take the $\opa_b$ of it, so we get
\begin{equation}
\opa_b(\chi T)=\opa_b(\wt R_M^{r,q+1}\opa_b(\chi T)).
\end{equation}
Note that $\opa_b(\chi T)=\chi\opa_b T+\opa_b(\chi)\wedge T$ with $\chi\opa_b T\in (L^p_{loc})^{r,s}(M)$, if $\opa_b T\in(L^p_{loc})^{r,s}(M)$, and support of $\opa_b(\chi)\wedge T$ is contained in $U\setminus V$. Since the operators are continuous from $(L^p_c)^{r,q}(U)$ into $(L^p_{loc})^{r,q-1}(U)$ and the singularity of their kernel is contained in the diagonal of $U\times U$, the restiction to $V$ of the current $\wt R_M^{r,q+1}\opa_b(\chi T)$ is a form $g\in(L^p_{loc})^{r,q-1}(V)$ such that
$\opa_b T=\opa_b g$ on $V$.
\end{proof}

\begin{cor}\label{isomcompact1}
Let $M$ be a $\ci$-smooth, connected, non compact, generic, $s$-concave, $s\geq 1$, CR submanifold of real codimension $k$ of a complex manifold $X$ of complex dimension $n$ and $p\geq 1$ a real number.
If $0\leq r\leq n$ and $0\leq q\leq s-1$, the natural maps
$$\Phi_{c,q}^p~:~H^{r,q}_{c,\infty}(M)\to H^{r,q}_{c,L^p}(M)\quad {\rm and}\quad  \Psi_{c,q}^p~:~H^{r,q}_{c,L^p_{loc}}(M)\to  H^{r,q}_{c,cur}(M)$$
are isomorphisms. Moreover $\Phi_{c,s}^p$ and $\Psi_{c,s}^p$ are only injective.

In particular $H^{r,q}_{c,\infty}(M)=0$ implies $H^{r,q}_{c,L^p}(M)=0$, when $0\leq r\leq n$ and $1\leq q\leq s-1$ and $H^{r,q}_{c,cur}(M)=0$ implies $H^{r,q}_{c,L^p}(M)=0$, when $0\leq r\leq n$ and $1\leq q\leq s$.
\end{cor}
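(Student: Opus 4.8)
The plan is to deduce the corollary from Theorem \ref{isomplonge}, whose two parts already carry the needed control on supports, combined with the classical comparison between the smooth and the current compactly supported $\opa_b$-cohomologies.

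First I would observe that when the relevant support is compact, every neighborhood $U$ occurring in Theorem \ref{isomplonge} may be chosen relatively compact. Hence, if $f\in(L^p_c)^{r,q}(M)$ is $\opa_b$-closed with $1\le q\le s-1$, part (i) applied with $U\subset\subset M$ produces $g\in(L^p_c)^{r,q-1}(M)$ with $f-\opa_b g\in\dc^{r,q}(M)$: this is exactly the surjectivity of $\Phi^p_{c,q}$ (for $q=0$ the map is the identity on compactly supported CR $(r,0)$-forms, by CR-hypoellipticity). Likewise, if $f\in(L^p_c)^{r,q}(M)$ is $\opa_b$-closed, $1\le q\le s$, and $f=\opa_b S$ for a compactly supported current $S$, then part (ii) applied with $U\subset\subset M$ a neighborhood of $\supp S$ yields a compactly supported $L^p$ primitive; this is the injectivity of $\Psi^p_{c,q}$ for $1\le q\le s$.

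For the two complementary halves I would pass to the composite $\Theta^p_{c,q}:=\Psi^p_{c,q}\circ\Phi^p_{c,q}\colon H^{r,q}_{c,\infty}(M)\to H^{r,q}_{c,cur}(M)$, the natural map from smooth to current compactly supported cohomology. By the smooth and current Poincaré lemmas recalled before Lemma \ref{poincareLp} (cf. \cite{Hecras},\cite{Nac},\cite{NacVa},\cite{BaLa}), the complexes of fine sheaves $(\dc^{r,\bullet}(M),\opa_b)$ and $(\ec'^{r,\bullet}(M),\opa_b)$ both resolve $\Omega^r M$ in degrees $\le s-1$. Forming the cone of the comparison morphism between these two complexes of c-soft sheaves, one obtains a complex of c-soft sheaves whose cohomology sheaves vanish in degrees $\le s-1$; the compactly supported hyperspectral sequence then makes its compactly supported hypercohomology vanish in degrees $\le s-1$, and the long exact sequence of the cone shows that $\Theta^p_{c,q}$ is an isomorphism for $0\le q\le s-1$ and \emph{injective} for $q=s$. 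Since $\Theta^p_{c,q}=\Psi^p_{c,q}\circ\Phi^p_{c,q}$, its injectivity forces that of $\Phi^p_{c,q}$ for $0\le q\le s$, and its surjectivity forces that of $\Psi^p_{c,q}$ for $0\le q\le s-1$.

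Combining the two paragraphs, $\Phi^p_{c,q}$ and $\Psi^p_{c,q}$ are isomorphisms for $0\le q\le s-1$ and injective for $q=s$. The two vanishing statements are then formal: $\Phi^p_{c,q}$ being an isomorphism turns $H^{r,q}_{c,\infty}(M)=0$ into $H^{r,q}_{c,L^p}(M)=0$ for $1\le q\le s-1$, while the injectivity of $\Psi^p_{c,q}$ embeds $H^{r,q}_{c,L^p}(M)$ into $H^{r,q}_{c,cur}(M)$, so the vanishing of the latter forces that of the former for $1\le q\le s$. The step I expect to be most delicate is the endpoint $q=s$: the local Poincaré lemma only yields exactness up to degree $s-1$, so the operators $\wt R_M$ of Theorem \ref{isomplonge} return mere $L^p$ primitives and cannot by themselves upgrade $\Phi^p_{c,s}$; it is precisely the cone argument — the persistence of injectivity at degree $s=(s-1)+1$ past the exact range — that delivers the injectivity at the endpoint, the remaining care being the routine verification that the operators of Theorem \ref{isomplonge} keep all supports compact.
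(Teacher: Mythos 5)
Your first paragraph is correct and coincides with the paper's own argument: assertions (i) and (ii) of Theorem \ref{isomplonge}, applied with $U$ relatively compact, give exactly the surjectivity of $\Phi^p_{c,q}$ for $1\leq q\leq s-1$ and the injectivity of $\Psi^p_{c,q}$ for $1\leq q\leq s$. The gap is in your sheaf-theoretic treatment of the composite $\Theta^p_{c,q}$, and it sits precisely at the endpoint $q=s$, i.e.\ at the injectivity of $\Phi^p_{c,s}$. Write $\ac^\bullet$ for the sheaf complex of germs of $\ci$-smooth forms and $\bc^\bullet$ for the one of currents. The Poincar\'e lemmas you invoke only say that the comparison morphism $\iota:\ac^\bullet\to\bc^\bullet$ induces isomorphisms on cohomology sheaves in degrees $0,1,\dots,s-1$; they say nothing in degree $s$, where the Poincar\'e lemma fails. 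In the long exact sequence of cohomology sheaves of the cone $C^\bullet$ this yields $\mathcal{H}^q(C^\bullet)=0$ only for $q\leq s-2$, whereas
$$\mathcal{H}^{s-1}(C^\bullet)\cong\ker\bigl(\iota_*:\mathcal{H}^{s}(\ac^\bullet)\to\mathcal{H}^{s}(\bc^\bullet)\bigr),$$
and nothing you cited controls this kernel. Hence the compactly supported hypercohomology of $C^\bullet$ is only known to vanish in degrees $\leq s-2$, and the long exact sequence gives: $\Theta^p_{c,q}$ surjective for $q\leq s-2$ and injective for $q\leq s-1$. The ``persistence of injectivity one degree past the exact range'' therefore lands at $q=s-1$, not at $q=s$. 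Your assertion that the cone's cohomology sheaves vanish up to degree $s-1$ is exactly equivalent to the unproved sheaf-level injectivity in degree $s$, namely: locally, a smooth $\opa_b$-closed $(r,s)$-form which is $\opa_b$ of a current is already $\opa_b$ of a smooth form. (The isomorphism statement for $0\leq q\leq s-1$ does survive, e.g.\ by a truncation argument identifying both $H^{r,q}_{c,\infty}(M)$ and $H^{r,q}_{c,cur}(M)$ with $H^q_c(M,\Omega^r M)$ compatibly with $\Theta^p_{c,q}$; only the claim about $\Phi^p_{c,s}$ is affected.)

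This endpoint injectivity is a genuine analytic input, not a formal consequence, and it is exactly where the paper stops arguing formally: it quotes Theorem 5.1 of \cite{LaLedualiteCR}, which proves that $\Theta_{c,s}^p:H^{r,s}_{c,\infty}(M)\to H^{r,s}_{c,cur}(M)$ is injective by means of the Bochner-Martinelli homotopy operators of \cite{BaLa} --- the same operators $\wt R_M^{r,q}$ that appear in the case $q=s$ of the proof of Theorem \ref{isomplonge} (for a current $T$ with $\opa_b T$ smooth one shows locally $\opa_b T=\opa_b g$ with $g$ smooth, because these operators preserve smoothness away from the cut-off region). To close your gap you must import this result, or reproduce that argument; with it, your cone does become acyclic in degrees $\leq s-1$, and the rest of your proposal, which otherwise coincides with the paper's proof, goes through.
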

\begin{proof}
First assume $0\leq q\leq s-1$. As in the complex case, since $H^{r,q}_\infty(M)\sim H^{r,q}_{cur}(M)$, the natural map $\Theta_{c,q}^p=\Psi_{c,q}^p\circ \Phi_{c,q}^p$ from $H^{r,q}_{c,\infty}(M)\to H^{r,q}_{c,cur}(M)$ is an isomorphism, consequently the map $\Phi_{c,q}^p$ is injective and the map $\Psi_{c,q}^p$ is surjective. Moreover by Theorem 5.1 in \cite{LaLedualiteCR} the map $\Theta_{c,s}^p$ is injective and so is $\Phi_{c,s}^p$.
On the other hand the assertion (i) of Theorem \ref{isomplonge} implies that $\Phi_{c,q}^p$ is surjective and the assertion (ii) of Theorem \ref{isomplonge} implies that $\Psi_{c,q}^p$ is injective. Moreover the last statement remains true if $q=s$.
\end{proof}

Unfortunately the Poincaré lemma for the $\opa_b$ operator fails to hold in any category in bidegree $(r,q)$ for $0\leq r\leq n$ and $s\leq q\leq n-k-s$, when $M$ is $s$-concave, hence the classical cohomological methods can no more be applied to get the Dolbeault isomorphisms when $q\geq n-k-s+1$, even if the Poincaré lemma holds for such $q$. The method developed in \cite{LaLeDolbeault} cannot either be used to get the Dolbeault isomorphisms with the $L^p_{loc}$ cohomology groups in high degrees since the notion of trace is not well defined in $L^p$. We will use the regularization method introduced by S. Sambou in \cite{Sareg}, which depends on the fundamental solution for the $\opa_b$ operator constructed in \cite{BaLa}.

\begin{thm}\label{isomgd}
Let $M$ be a $\ci$-smooth, generic, $s$-concave, $s\geq 1$, CR
submanifold of real codimension $k$ of a complex manifold $X$ of
complex dimension $n$ and $p\geq 1$ a real number.
If $0\leq r\leq n$, the natural maps
$$\Phi_q^p~:~H^{r,q}_\infty(M)\to H^{r,q}_{L^p_{loc}}(M)\quad {\rm and}\quad  \Psi_q^p~:~H^{r,q}_{L^p_{loc}}(M)\to  H^{r,q}_{cur}(M)$$
satisfy:

(i) $\Phi_q^p$ and $\Psi_q^p$ are surjective, if $n-k-s+1\leq q\leq n-k$.

(ii) $\Phi_q^p$ and $\Psi_q^p$ are injective, if $n-k-s+2\leq q\leq n-k$.

In particular $H^{r,q}_{\infty}(M)=0$ implies $H^{r,q}_{L^p_{loc}}(M)=0$, when $0\leq r\leq n$ and $n-k-s+1\leq q\leq n-k$ and $H^{r,q}_{cur}(M)=0$ implies $H^{r,q}_{L^p_{loc}}(M)=0$, when $0\leq r\leq n$ and $n-k-s+2\leq q\leq n-k$.
\end{thm}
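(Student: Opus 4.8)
The plan is to transfer the proof of Theorem \ref{isomplonge} to the high-degree range, the only obstruction being that the sheaf-theoretic globalization used there is unavailable: the complexes $\ec^{r,\bullet}(M)$, $(L^p_{loc})^{r,\bullet}(M)$ and $\dc'^{r,\bullet}(M)$ cannot be recognized as resolutions of $\Omega^r M$ because the Poincar\'e lemma is broken in the intermediate degrees $s\le q\le n-k-s$. The remedy, following the regularization method of S. Sambou \cite{Sareg}, is to build a \emph{global} homotopy formula directly from the local fundamental solution of \cite{BaLa}, which is what produces the isomorphisms without passing through a full resolution. The key point is that the local homotopy formula \eqref{BMq} is valid \emph{precisely} in the range $n-k-s+1\le\wt q\le n-k$, so all the local operators needed in high degree are available, together with their transposes $\wt R_M$ as in the proof of Theorem \ref{isomplonge}.

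Concretely, I would first record, for each $q$ with $n-k-s+1\le q\le n-k$, a global identity
\[
\id=\opa_b P_q+P_{q+1}\opa_b+K_q
\]
on $(r,q)$-forms, where $P_q$ lowers the bidegree by one, $K_q$ preserves it and is \emph{smoothing}, i.e. $K_q$ maps $\dc'^{r,q}(M)$ and $(L^p_{loc})^{r,q}(M)$ into $\ec^{r,q}(M)$, and where both $P_q$ and $K_q$ act continuously on each of the three complexes. The operators are assembled from the $\widehat R_M$ of \eqref{BMq} on smooth forms and from their transposes $\wt R_M$ on currents, patched by a locally finite partition of unity; the continuity on $(L^p_{loc})^{r,\bullet}(M)$ for every $p\ge1$ (including $p=1$, via the weak type $\frac{2n}{2n-1}$ of the kernels) and on the current complex is exactly the set of mapping properties established in Lemma \ref{poincareLp} and in the proof of Theorem \ref{isomplonge}.

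Granting such a formula, the four assertions follow by the same elementary bookkeeping that underlies Theorem \ref{isomplonge}. For the surjectivity of $\Psi_q^p$ with $n-k-s+1\le q\le n-k$, I take a $\opa_b$-closed current $T$ and write $T=\opa_b(P_qT)+K_qT$; then $K_qT$ is a smooth, $\opa_b$-closed $(r,q)$-form lying in $(L^p_{loc})^{r,q}(M)$ whose current class is $[T]$, so $\Psi_q^p$ is onto, and the argument for $\Phi_q^p$ is identical starting from a closed $L^p_{loc}$ form. For the injectivity with $n-k-s+2\le q\le n-k$, I apply the formula one degree lower: if $f$ is a closed cocycle of the smaller complex with $f=\opa_b g$ in the larger one, then $g=\opa_b P_{q-1}g+P_qf+K_{q-1}g$, whence $f=\opa_b\big(P_qf+K_{q-1}g\big)$; here $P_qf$ stays in the smaller complex (smooth if $f$ is smooth, $L^p_{loc}$ if $f$ is $L^p_{loc}$) and $K_{q-1}g$ is smooth, so the new primitive is a coboundary of the smaller complex. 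This manipulation uses the formula in bidegree $q-1$, which forces $q-1\ge n-k-s+1$ and thereby produces the shifted range $n-k-s+2\le q\le n-k$ for injectivity. The two ``in particular'' statements are the special cases in which the cohomology of the target complex vanishes.

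The hard part is not this bookkeeping but the construction of the formula itself, that is, verifying that the remainder $K_q$ coming from the partition of unity is genuinely smoothing while $P_q$ remains continuous simultaneously on the smooth, $L^p_{loc}$ and current complexes. This is the analytic core of Sambou's method and rests entirely on the fine structure of the fundamental solution of \cite{BaLa}: that the singularities of the kernels $R_M^{r,q}$ concentrate on the diagonal, so that the terms built from the $\opa_b\chi_i$ acquire $\ci$ kernels, essentially because of the cancellation $\sum_i\opa_b\chi_i=0$ combined with the diagonal concentration of the singularities, and that these kernels are of weak type $\frac{2n}{2n-1}$, which yields the uniform $L^p$ continuity for all $p\ge1$ and, by transposition, the continuity on currents. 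Once this is secured no concavity hypothesis beyond $s$-concavity enters, and the bidegree count reproduces exactly the stated ranges.
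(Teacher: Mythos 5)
Your proposal takes essentially the same route as the paper: the paper's proof likewise rests on Sambou's regularization method \cite{Sareg}, which it cites as supplying exactly the global homotopy formula $T=R_{r,q}T+A_{r,q+1}\opa_b T+\opa_b A_{r,q}T$ (valid for $n-k-s+1\leq q\leq n-k$) with $R_{r,q}$ smoothing and $A_{r,q}$ continuous on the smooth, $L^p_{loc}$ and current complexes, these operators being built from the fundamental solution of \cite{BaLa} just as you describe. Your subsequent bookkeeping --- surjectivity by applying the formula to a closed form or current, injectivity by applying it to the primitive one degree lower, which forces the shifted range $n-k-s+2\leq q\leq n-k$ --- coincides with the paper's argument.
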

\begin{proof}
In \cite{Sareg}, S. Sambou proved that for any  $0\leq r\leq n$ and $1\leq q\leq s-1$ or $n-k-s+1\leq q\leq n-k$ there exists linear operators
$$R_{r,q}~:~\dc'^{r,q}(M)\to \ec^{r,q}(M)\quad {\rm and}\quad A_{r,q}~:~\dc'^{r,q}(M)\to \dc'^{r,q-1}(M)$$
such that $A_{r,q}$ has the same regularity properties than the
operators $\wt R_M^{r,q}$ defined in the proof of Theorem
\ref{isomplonge}, in particular $A_{r,q}$ is continuous from
$(L^p_{loc})^{r,q}(M)$ into $(L^p_{loc})^{r,q-1}(M)$ for $p\geq 1$ and from  $\ec^{r,q}(M)$ into $\ec^{r,q-1}(M)$, and, for any $T\in \dc'^{r,q}(M)$,
$$T=R_{r,q}T+A_{r,q+1}\opa_b T+\opa_b A_{r,q}T,$$
if $1\leq q\leq s-2$ or $n-k-s+1\leq q\leq n-k$ and
$$T=R_{r,q}T+\opa_b A_{r,q}T,$$
if $q=s-1$ and $ \opa_b T=0$.

It follows that for $f\in(L^p_{loc})^{r,q}(M)$ with $\opa_b f=0$ we get $f=R_{r,q}f+\opa_b A_{r,q}f$ where $R_{r,q}f$ is $\ci$-smooth on $M$ and $ A_{r,q}f$ is $L^p_{loc}$. Hence $\Phi_q^p$ is surjective for $1\leq q\leq s-1$ or $n-k-s+1\leq q\leq n-k$. In the same way we get that $\Psi_q^p$ is surjective if $1\leq q\leq s-1$ or $n-k-s+1\leq q\leq n-k$, since $\ec^{r,q}(M)\subset (L^p_{loc})^{r,q}(M)$.

If $f\in(L^p_{loc})^{r,q}(M)$ satisfies $f=\opa_b T$, where $T$ is a $(r,q-1)$-current on $M$, then $f=\opa_b (R_{r,q-1}T+A_{r,q}f)$ for $1\leq q\leq s$ or $n-k-s+2\leq q\leq n-k$ and $R_{r,q-1}T+A_{r,q}f\in(L^p_{loc})^{r,q}(M)$. Hence $\Psi_q^p$ is injective, if $1\leq q\leq s$ or $n-k-s+2\leq q\leq n-k$. The proof of the injectivity of $\Phi_q^p$ uses the same arguments since the operators $A_{r,q}$ is continuous from $\ec^{r,q}(M)$ into $\ec^{r,q-1}(M)$.
\end{proof}

Note that the proof of Theorem \ref{isomgd} gives also an alternative proof for the Dolbeault isomorphism in small degrees.

As in the complex case, it follows from Theorem \ref{isomgd} that the assertion (ii) from Theorem \ref{isomplonge} holds also for $n-k-s+2\leq q\leq n-k$.
\begin{cor}\label{Lpcompact}
Let $M$ be a $\ci$-smooth, generic, $s$-concave, $s\geq 1$, CR
submanifold of real codimension $k$ of a complex manifold $X$ of
complex dimension $n$ and $p\geq 1$ a real number. Let $(r,q)$ such that $0\leq r\leq n$ and $1\leq q\leq s$ or $n-k-s+2\leq q\leq n-k$, and $f\in(L^p_c)^{r,q}(M)$ with $f=\opa_b S$ for some current $S\in\dc'^{r,q-1}(M)$, then, for any neighborhood $U$ of the support of $S$, there exists $g\in(L^p_c)^{r,q-1}(M)$ with $\opa_b g=f$ and ${\rm supp}~g\subset U$.
\end{cor}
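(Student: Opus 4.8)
The plan is to deduce Corollary \ref{Lpcompact} from Theorem \ref{isomgd} by a duality/support argument that upgrades the $L^p_{loc}$ solvability with support control (already essentially furnished by the regularizing operators $R_{r,q}$ and $A_{r,q}$) into $L^p$ solvability with \emph{compact} support. The starting observation is that $f\in(L^p_c)^{r,q}(M)$ in particular lies in $(L^p_{loc})^{r,q}(M)$ and satisfies $f=\opa_b S$ for a current $S$. Thus the hypothesis of the injectivity part of Theorem \ref{isomgd} (for $1\leq q\leq s$ or $n-k-s+2\leq q\leq n-k$) is met, so there exists $g_0\in(L^p_{loc})^{r,q-1}(M)$ with $\opa_b g_0=f$ on all of $M$; moreover the explicit formula $g_0=R_{r,q-1}S+A_{r,q}f$ gives support control, since $A_{r,q}$ has a kernel concentrated on the diagonal and $R_{r,q-1}$ is built from the same local parametrix. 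The issue is that $g_0$ is only $L^p_{loc}$ and a priori has noncompact support, whereas we need $g\in(L^p_c)^{r,q-1}$ with $\supp g\subset U$.

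\textbf{Reducing to the compact-support problem.} First I would use the support-localization already available: because $f$ has compact support and the operators are pseudolocal, $\opa_b g_0=f=0$ off $\supp f$, so $g_0$ is a $\opa_b$-closed $L^p_{loc}$ form of bidegree $(r,q-1)$ on the (connected) complement of a large compact set. The plan is to kill $g_0$ outside a compact neighborhood of $\supp S$. Concretely I would examine the neighborhood-of-support statement: since $S$ has its support in $U$ and the kernels are diagonal-supported, $R_{r,q-1}S+A_{r,q}f$ can be arranged to have support in any preassigned neighborhood of $\supp S\cup\supp f\subset U$. This is exactly the mechanism used in assertion (ii) of Theorem \ref{isomplonge} and in the proof of Corollary \ref{isomcompact1}, transposed to the current complex; the key point is that $A_{r,q}$ is continuous on $(L^p_{loc})$ for $p\geq 1$ (including $p=1$, by the weak-type $\tfrac{2n}{2n-1}$ estimate) and that $R_{r,q-1}$ lands in $\ec^{r,q-1}$, so the resulting $g$ is genuinely $L^p$ and, after the localization, compactly supported.

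\textbf{The main obstacle.} The delicate step is ensuring \emph{exact} compact support rather than merely rapid decay, while simultaneously keeping the form $L^p$ and $\opa_b$-closing to $f$. If one naively multiplies $g_0$ by a cutoff $\chi$ equal to $1$ near $\supp f$, then $\opa_b(\chi g_0)=\chi f+(\opa_b\chi)\wedge g_0=f+(\opa_b\chi)\wedge g_0$, and the error term $(\opa_b\chi)\wedge g_0$ is a $\opa_b$-closed $L^p_c$ form supported in the annular region $U\setminus\supp f$, of bidegree $(r,q)$. To absorb it I would invoke Corollary \ref{isomcompact1} (the compact-support Dolbeault isomorphism, valid in these degrees), which guarantees that this $\opa_b$-closed $L^p_c$ error is itself $\opa_b$ of an $L^p_c$ form, with support control; subtracting that corrector yields the desired $g$. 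The genuine difficulty is matching the degree ranges: one must check that the error term's bidegree $(r,q)$ falls inside the range where the compact-support solvability of Corollary \ref{isomcompact1} applies, and that the annular region is ``far enough'' from the non-solvable middle degrees $s\leq q\leq n-k-s$ for the concavity hypotheses to be usable. Once the bookkeeping on $q$ confirms compatibility, the result follows by combining the $L^p_{loc}$ solution from Theorem \ref{isomgd}, a cutoff, and the compact-support corrector from Corollary \ref{isomcompact1}.
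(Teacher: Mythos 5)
Your first step is essentially the paper's own route, but your second step is both unnecessary and flawed. In the paper, Corollary \ref{Lpcompact} carries no separate proof at all: it is presented as an immediate consequence of assertion (ii) of Theorem \ref{isomplonge} together with the remark made just before the corollary, namely that this assertion extends to $n-k-s+2\leq q\leq n-k$ by Theorem \ref{isomgd}. Those statements already contain the support control: they produce $g\in(L^p_{loc})^{r,q-1}(M)$ with $\opa_b g=f$ and ${\rm supp}\,g\subset U$ for \emph{any} neighborhood $U$ of ${\rm supp}\,S$. Since (in the situation where the corollary is meaningful and is later used, e.g.\ in Theorem \ref{densite1}, the support of $S$ is compact) one may shrink $U$ to a relatively compact neighborhood of ${\rm supp}\,S$, the solution automatically has compact support, and an $L^p_{loc}$ form with compact support \emph{is} an element of $(L^p_c)^{r,q-1}(M)$. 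This is the observation you miss: having claimed — correctly, and in the spirit of the paper's mechanism via Sambou's operators — that $g_0=R_{r,q-1}S+A_{r,q}f$ can be arranged to have support in any preassigned neighborhood of ${\rm supp}\,S\cup{\rm supp}\,f\subset U$, you are already done; there is no remaining issue for a cutoff argument to fix.

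The cutoff-and-correct step itself contains a genuine gap. To absorb the error $h=(\opa_b\chi)\wedge g_0$ via Corollary \ref{isomcompact1} (injectivity of $\Psi_{c,q}^p$) you must know that $h$ is $\opa_b$-exact \emph{as a compactly supported current}; being $\opa_b$-closed and in $L^p_c$ is not enough, because that corollary is an injectivity statement, not a vanishing theorem, and no vanishing of $H^{r,q}_{c,\cdot}(M)$ is assumed here. The only primitive available is $\chi g_0-S$ (indeed $h=\opa_b(\chi g_0)-f=\opa_b(\chi g_0-S)$), and this is compactly supported only when ${\rm supp}\,S$ is compact — precisely the case in which, as noted above, no correction is needed. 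Two further mismatches: Corollaries \ref{isomcompact1} and \ref{isomcompact2} give no localization of the support of the corrector $u$, so even granting its existence you cannot conclude ${\rm supp}(\chi g_0-u)\subset U$, and you would have to fall back on Theorem \ref{isomplonge}(ii) anyway; and in the range $n-k-s+2\leq q\leq n-k$ the compact-support isomorphism (Corollary \ref{isomcompact2}) is only available for $p>1$, whereas the statement to be proved allows $p\geq 1$. So your scheme either does not apply or forfeits exactly the conclusions (compact support, localization in $U$) that distinguish the corollary from Theorem \ref{isomgd}.
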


Moreover the Dolbeault isomorphism holds also in large degrees for the cohomology groups with compact support.
\begin{cor}\label{isomcompact2}
Let $M$ be a $\ci$-smooth, generic, $s$-concave, $s\geq 1$, CR submanifold of real codimension $k$ of a complex manifold $X$ of complex dimension $n$ and $p>1$ a real number.
If $0\leq r\leq n$ and $n-k-s+2\leq q\leq n-k$, the natural maps
$$\Phi_{c,q}^p~:~H^{r,q}_{c,\infty}(M)\to H^{r,q}_{c,L^p}(M)\quad {\rm and}\quad  \Psi_{c,q}^p~:~H^{r,q}_{c,L^p}(M)\to  H^{r,q}_{c,cur}(M)$$
are isomorphisms. Moreover $\Phi_{c,n-k-s+1}^p$ and $\Psi_{c,n-k-s+1}^p$ are only surjective.

In particular $H^{r,q}_{c,\infty}(M)=0$ implies $H^{r,q}_{c,L^p}(M)=0$, when $0\leq r\leq n$ and $n-k-s+1\leq q\leq n-k$ and $H^{r,q}_{c,cur}(M)=0$ implies $H^{r,q}_{c,L^p}(M)=0$, when $0\leq r\leq n$ and $n-k-s+2\leq q\leq n-k$ .
\end{cor}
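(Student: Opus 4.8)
The plan is to obtain this statement from the small-degree isomorphisms of Theorem~\ref{isomplonge} by Serre duality, following the scheme of Corollary~\ref{isomcompact1}; this is also the reason one must now assume $p>1$, so that $\frac{1}{p}+\frac{1}{p'}=1$ with $1<p'<\infty$ and the $L^p$ duality of Section~2 applies. Recall that the compact-support complexes $\big((L^p_c)^{r,\bullet}(M),\opa_b\big)$, $(\dc^{r,\bullet}(M),\opa_b)$ and $(\ec'^{r,\bullet}(M),\opa_b)$ are dual, respectively, to the $L^{p'}_{loc}$, current and smooth complexes in complementary bidegree. Under the induced identifications $H^{r,q}_{c,L^p}(M)\cong\big(H^{n-r,n-k-q}_{L^{p'}_{loc}}(M)\big)'$, $H^{r,q}_{c,\infty}(M)\cong\big(H^{n-r,n-k-q}_{cur}(M)\big)'$ and $H^{r,q}_{c,cur}(M)\cong\big(H^{n-r,n-k-q}_\infty(M)\big)'$, the maps $\Phi_{c,q}^p$ and $\Psi_{c,q}^p$ become the transposes of $\Psi^{p'}_{n-k-q}$ and $\Phi^{p'}_{n-k-q}$ in bidegree $(n-r,n-k-q)$, where $n-k-q$ now runs through the small range $[0,s-1]$ covered by Theorem~\ref{isomplonge}.

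First I would treat the range $n-k-s+2\le q\le n-k$, i.e. $0\le n-k-q\le s-2$. Here both the complementary degree $n-k-q$ and the adjacent degree $n-k-q+1\le s-1$ lie where the Poincaré lemma holds; there $\opa_b$ has closed range, the cohomology groups in bidegree $(n-r,n-k-q)$ are Hausdorff, and the Serre pairings are perfect. Since $\Phi^{p'}_{n-k-q}$ and $\Psi^{p'}_{n-k-q}$ are isomorphisms by Theorem~\ref{isomplonge}, their transposes $\Psi_{c,q}^p$ and $\Phi_{c,q}^p$ are isomorphisms as well. Equivalently, mirroring Corollary~\ref{isomcompact1}: $\Theta_{c,q}^p=\Psi_{c,q}^p\circ\Phi_{c,q}^p$ is an isomorphism (being the transpose of the small-degree Dolbeault isomorphism $H^{n-r,n-k-q}_\infty(M)\sim H^{n-r,n-k-q}_{cur}(M)$), whence $\Phi_{c,q}^p$ is injective and $\Psi_{c,q}^p$ surjective; injectivity of $\Psi_{c,q}^p$ is then the support-preserving assertion~(ii) of Theorem~\ref{isomplonge}, extended to large degrees by the remark following Theorem~\ref{isomgd}, and $\Phi_{c,q}^p=(\Psi_{c,q}^p)^{-1}\circ\Theta_{c,q}^p$ is an isomorphism. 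The ``in particular'' vanishing assertions follow formally, exactly as in Corollary~\ref{isomcompact1}.

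It remains to handle the endpoint $q=n-k-s+1$, where $n-k-q=s-1$; this is the delicate point. Theorem~\ref{isomplonge} still gives that $\Phi^{p'}_{s-1}$ and $\Psi^{p'}_{s-1}$ are isomorphisms, so transposition continues to transport surjectivity of $\Phi_{c,q}^p$ and $\Psi_{c,q}^p$ — equivalently, surjectivity of $\Theta_{c,n-k-s+1}^p$ is the Serre-dual of the boundary injectivity furnished by Theorem~5.1 of \cite{LaLedualiteCR} used at the endpoint in Corollary~\ref{isomcompact1}. What cannot be upgraded is injectivity, and this is the main obstacle: the degree adjacent to $s-1$ in the dual picture is $s$, precisely the first degree where the Poincaré lemma and the closed-range property fail, so $H^{r,n-k-s+1}_{c,L^p}(M)$ need not be Hausdorff, the pairing degenerates on that side, and the transpose of an isomorphism only yields a surjection. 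This surjectivity-only behaviour at the lower endpoint mirrors exactly the conclusion of Theorem~\ref{isomgd}(i); away from it everything is formal once the duality identifications and the small-degree isomorphisms of Theorem~\ref{isomplonge} are in place, and essentially all the work is the closed-range analysis at this single critical degree.
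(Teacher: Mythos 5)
Your strategy---identifying the compactly supported cohomologies with duals of the $L^{p'}_{loc}$, current and smooth cohomologies in complementary degree and transposing the small-degree isomorphisms of Theorem \ref{isomplonge}---is not the paper's route, and it has a genuine gap at its core. The identifications you invoke, $H^{r,q}_{c,L^p}(M)\cong\big(H^{n-r,n-k-q}_{L^{p'}_{loc}}(M)\big)'$, $H^{r,q}_{c,\infty}(M)\cong\big(H^{n-r,n-k-q}_{cur}(M)\big)'$ and $H^{r,q}_{c,cur}(M)\cong\big(H^{n-r,n-k-q}_{\infty}(M)\big)'$, do not hold in the generality of the Corollary: what is true unconditionally (Theorem 1.5 of \cite{LaLedualiteCR}, as quoted in Section 2.2) is that the \emph{separated} groups ${}^\sigma H^{r,q}_{c,L^p}(M)$, ${}^\sigma H^{r,q}_{c,\infty}(M)$, ${}^\sigma H^{r,q}_{c,cur}(M)$ are these duals; duality for the honest cohomology groups requires them to be Hausdorff (Theorem 3.2 of \cite{LaLedualiteCR}, Theorem \ref{serre}). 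Your justification of Hausdorffness---that in the range $n-k-q\le s-2$ the Poincar\'e lemma holds in the complementary and adjacent degrees, hence ``$\opa_b$ has closed range''---is a false inference: the Poincar\'e lemma is a purely local statement, while closed range and Hausdorffness of cohomology are global properties, and local exactness never implies them. This is precisely why the paper imposes extra geometric hypotheses (compactness, Malgrange's theorem on connected non-compact $1$-concave manifolds, a pseudoconvex ambient manifold) whenever it needs Hausdorffness, as in Theorems \ref{densite1} and \ref{resolcomp1}. Without Hausdorffness, transposition yields at best isomorphisms of the separated groups ${}^\sigma H$, which is strictly weaker than the Corollary: it cannot show, for instance, that a smooth compactly supported form which is $\opa_b$ of a form in $(L^p_c)^{r,q-1}(M)$ is $\opa_b$ of a smooth compactly supported form---only that it lies in the closure of $\opa_b\dc^{r,q-1}(M)$. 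This defect undermines every part of your argument except the injectivity of $\Psi_{c,q}^p$, which you correctly reduce to Corollary \ref{Lpcompact}.

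The Corollary is instead meant to be read off, exactly as Corollary \ref{isomcompact1} was read off from Theorem \ref{isomplonge}, from the machinery already established in Theorem \ref{isomgd}: Sambou's regularization operators $R_{r,q}$ and $A_{r,q}$ are de Rham-type operators, built from kernels singular only on the diagonal, and they respect supports, so the homotopy identities can be applied inside the compactly supported categories. For a $\opa_b$-closed $f\in(L^p_c)^{r,q}(M)$ with $n-k-s+1\le q\le n-k$, the identity $f=R_{r,q}f+\opa_b A_{r,q}f$, with $R_{r,q}f$ smooth compactly supported and $A_{r,q}f\in(L^p_c)^{r,q-1}(M)$, gives surjectivity of $\Phi_{c,q}^p$, and likewise of $\Psi_{c,q}^p$; applying the homotopy formula in degree $q-1$ to a compactly supported primitive gives injectivity of $\Phi_{c,q}^p$ and $\Psi_{c,q}^p$ when $q-1\ge n-k-s+1$, i.e. $q\ge n-k-s+2$ (Corollary \ref{Lpcompact} is exactly this statement for $\Psi$). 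Injectivity is out of reach at $q=n-k-s+1$ simply because the homotopy formula in degree $n-k-s$ is unavailable---that degree lies in the gap where the Poincar\'e lemma fails---and not because of the closed-range degeneration you describe. No global closed-range or Hausdorff information enters anywhere, which is what makes the large-degree case tractable at all; if you want to salvage a duality proof, you must either add Hausdorffness hypotheses (thereby changing the statement) or settle for a statement about separated cohomology.
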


To end this section we will precise the support of the solution of the tangential Cauchy-Riemann equation $\opa_b g=f$, when $f$ is a $(r,1)$-form, $0\leq r\leq n$, with compact support in $M$ and $L^p$ coefficients.

\begin{prop}\label{d1}
Let $M$ be a $\ci$-smooth, connected, generic, $1$-concave CR
submanifold of real codimension $k$ of a complex manifold $X$ of
complex dimension $n$, $D$ a relatively compact subset of $M$ and
$p\geq 1$ a real number. Assume
$H^{r,1}_{c,cur}(M)=0$ for $0\leq r\leq n$, and $M\setminus D$ is connected, then for any $\opa_b$-closed $(r,1)$-form $f$ with support in $\ol D$ and $L^p$ coefficients there exists a unique $(r,0)$-form $g$ with $L^p$ coefficients and such that $\opa_b g=f$ and ${\rm supp}~g\subset\ol D$.
\end{prop}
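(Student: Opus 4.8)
The plan is to first produce \emph{some} compactly supported $L^p$ solution, using the vanishing of $H^{r,1}_{c,cur}(M)$ together with the $L^p$ regularization already established, and then to shrink its support into $\ol D$ by a unique continuation argument; uniqueness will come from the same principle.

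\emph{Step 1 (a compactly supported solution).} First I would regard $f$, which is $\opa_b$-closed with support in the compact set $\ol D$ and $L^p$ coefficients, as a compactly supported current of bidegree $(r,1)$. Since $H^{r,1}_{c,cur}(M)=0$, there is a current $S\in\ec'^{r,0}(M)$ with $\opa_b S=f$. As $M$ is $1$-concave, I can then apply Corollary \ref{Lpcompact} with $s=1$ and $q=1$ to upgrade this to a genuine $L^p$ primitive: there exists $g\in(L^p_c)^{r,0}(M)$ with $\opa_b g=f$. At this stage the support of $g$ is only controlled by a neighborhood of $\supp S$, which may be much larger than $\ol D$.

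\emph{Step 2 (localizing the support).} On the open set $M\setminus\ol D$ one has $f=0$, so $\opa_b g=0$ there in the weak sense. Invoking the CR-hypoellipticity of $\opa_b$ on the $1$-concave manifold $M$, the form $g$ is $\ci$-smooth and CR on $M\setminus\ol D$. Since $g$ is compactly supported it vanishes outside some compact set $K$. I would then appeal to weak unique continuation for $\opa_b$-closed $(r,0)$-forms: on any connected component of $M\setminus\ol D$ meeting $M\setminus K$, the form $g$ vanishes identically. The hypothesis that $M\setminus D$ is connected is exactly what rules out relatively compact components of $M\setminus\ol D$, so that every component meets $M\setminus K$ and hence $g\equiv0$ on $M\setminus\ol D$, i.e. $\supp g\subset\ol D$.

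\emph{Step 3 (uniqueness) and the main obstacle.} If $g_1,g_2$ are two solutions, then $g=g_1-g_2$ satisfies $\opa_b g=0$ on all of $M$ and $\supp g\subset\ol D$; by CR-hypoellipticity $g$ is a smooth CR $(r,0)$-form on the connected manifold $M$ that vanishes on the nonempty open set $M\setminus\ol D$, so weak unique continuation forces $g\equiv0$ and $g_1=g_2$. I expect the unique continuation step to be the crux throughout: contrary to the holomorphic case, CR functions do not obey the identity theorem on an arbitrary CR manifold, and the argument must exploit the fact that a locally embeddable $1$-concave CR manifold is minimal enough to force local holomorphic extension and thereby weak unique continuation for $\opa_b$-closed $(r,0)$-forms. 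The accompanying topological point---deducing from the connectedness of $M\setminus D$ that $M\setminus\ol D$ has no relatively compact connected component---also has to be verified with care.
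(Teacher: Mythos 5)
Your proof is correct and takes essentially the same route as the paper: existence of a compactly supported $L^p$ solution from $H^{r,1}_{c,cur}(M)=0$ via the $L^p$ Dolbeault results (the paper invokes Corollary \ref{isomcompact1}, you invoke the equivalent Corollary \ref{Lpcompact}), then support shrinking and uniqueness by analytic continuation in the connected $1$-concave manifold $M$. The topological point you flag about components of $M\setminus\ol D$ is treated no more carefully in the paper itself, which simply asserts that $g$ vanishes on $M\setminus\ol D$ because $M\setminus D$ is connected.
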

\begin{proof}
Since $M$ is $1$-concave, it follows from Corollary \ref{isomcompact1} that for any $0\leq r\leq n$, $H^{r,1}_{c,cur}(M)=0$ implies $H^{r,1}_{c,L^p}(M)=0$. Hence there exists an $(r,0)$-form $g$ with compact support in $M$ and $L^p$ coefficients such that $\opa_b g=f$. The form $g$ vanishes on some open subset of $M\setminus D$ and is CR on $M\setminus {\rm supp}~f$, which contains $M\setminus\ol D$. By analytic continuation, which holds in $1$-concave manifolds, $g$ vanishes on $M\setminus\ol D$, since $M\setminus D$ is connected, therefore ${\rm supp}~g\subset\ol D$. Let $h$ be another solution with compact support in $M$, then $g-h$ is CR with compact support in $M$, hence vanishes on $M$ since $M$ is connected and $g=h$.
\end{proof}

\subsection{Serre duality}

In this section $M$ will always be a $\ci$-smooth, generic CR submanifold of real codimension $k$ in a $n$-dimensional complex manifold $X$.

As we already mentioned in the previous section, if $p$ and $p'$ satisfy $\frac{1}{p}+\frac{1}{p'}=1$, the spaces $(L^p_{loc})^{r,q}(M)$ and $(L^{p'}_c)^{n-r,n-k-q}(M)$ are dual from each other, with the duality pairing $<f,g>=\int_M f\wedge g$ and we get two dual complexes $\big((L^p_{loc})^{r,\bullet}(M),\opa_b\big)$
and $\big((L^{p'}_c)^{n-r,\bullet}(M),\opa_b\big)$ for each $0\leq r\leq n$.

Fix some integer $r$, $0\leq r\leq n$. We can apply Theorem 1.5 in \cite{LaLedualiteCR} to the complexes $(\dc^{r,\bullet},\opa_b)$, $(\ec'^{r,\bullet},\opa_b)$ and $\big((L^{p'}_c)^{n-r,\bullet}(M),\opa_b\big)$. We deduce that if $\frac{1}{p}+\frac{1}{p'}=1$, for any $0\leq q\leq n-k$, the natural maps
\begin{align*}
^\sigma H^{r,q}_{cur}(M)&\to \big(H^{n-r,n-k-q}_{c,\infty}(M)\big)'\\
^\sigma H^{r,q}_{\infty}(M)&\to \big(H^{n-r,n-k-q}_{c,cur}(M)\big)'\\
^\sigma H^{r,q}_{c,cur}(M)&\to \big(H^{n-r,n-k-q}_{\infty}(M)\big)'\\
^\sigma H^{r,q}_{c,\infty}(M)&\to \big(H^{n-r,n-k-q}_{cur}(M)\big)'\\
^\sigma H^{r,q}_{L^p_{loc}}(M)&\to \big(H^{n-r,n-k-q}_{c,L^{p'}}(M)\big)'\\
^\sigma H^{r,q}_{c,L^p}(M)&\to \big(H^{n-r,n-k-q}_{L^{p'}_{loc}}(M)\big)'.
\end{align*}
are isomorphisms.

Moreover, from Theorem 3.2 in \cite{LaLedualiteCR}, we get that $H^{n-r,n-k-q}_{c,cur}(M)$ is Hausdorff if and only if  $H^{r,q+1}_{\infty}(M)$ is Hausdorff and if these assertions hold, then
\begin{align*}
&H^{n-r,n-k-q}_{c,cur}(M)\sim \big(H^{r,q}_{\infty}(M)\big)'\\
&H^{r,q+1}_{\infty}(M)\sim \big(H^{n-r,n-k-q-1}_{c,cur}(M)\big)'.
\end{align*}

We deduce easily that, if there exists an integer $s(M)\geq 1$ such that $H^{r,q}_{\infty}(M)$ is Hausdorff for $0\leq q\leq s(M)-1$ and $n-k-s(M)+1\leq q\leq n-k$, then
$$H^{r,q}_{c,cur}(M) ~{\rm is~ Hausdorff~ and}~ H^{r,q}_{c,cur}(M)\sim \big(H^{n-r,n-k-q}_{\infty}(M)\big)',$$
if $1\leq q\leq s(M)$ and $n-k-s(M)+2\leq q\leq n-k$. In particular
$$H^{r,q}_{\infty}(M)=0\quad {\rm for}\quad  0\leq q\leq s(M)-1~ {\rm and}~ n-k-s(M)+1\leq q\leq n-k$$
implies
$$H^{r,q}_{c,cur}(M)=0\quad {\rm for}\quad  0\leq q\leq s(M)-1~ {\rm and}~ n-k-s(M)+2\leq q\leq n-k.$$
Associated with the Corollaries \ref{isomcompact1} and \ref{isomcompact2}, this gives
\begin{prop}\label{annulcomp}
Let $M$ be a $\ci$-smooth, generic, $s$-concave, $s\geq 1$, CR submanifold of real codimension $k$ in a $n$-dimensional complex manifold $X$  and $p>1$ a real number. Assume that  $H^{r,q}_{\infty}(M)=0$ for $0\leq r\leq n$ and $0\leq q\leq s-1$ or $n-k-s+1\leq q\leq n-k$, then $H^{r,q}_{c,L^p}(M)=0$ for $0\leq r\leq n$ and $0\leq q\leq s-1$ or $n-k-s+2\leq q\leq n-k$.
\end{prop}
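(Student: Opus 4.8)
The plan is to assemble the statement from the Serre-duality computation just carried out together with the two Dolbeault-type isomorphisms of Corollaries \ref{isomcompact1} and \ref{isomcompact2}; essentially no new analysis is required. First I would deduce the vanishing of the \emph{current} cohomology with compact support, and only then transfer it to the $L^p$ setting.

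Step one: apply the Serre-duality conclusion displayed just above with $s(M)=s$. Since the hypothesis $H^{r,q}_{\infty}(M)=0$ for $0\leq q\leq s-1$ or $n-k-s+1\leq q\leq n-k$ holds for \emph{all} $0\leq r\leq n$, and since the zero space is trivially Hausdorff, the identification $H^{r,q}_{c,cur}(M)\sim\big(H^{n-r,n-k-q}_{\infty}(M)\big)'$ is available. The key bookkeeping point is that the hypothesis range is symmetric under $q\mapsto n-k-q$ in exactly the way needed: if $0\leq q\leq s-1$ then the dual degree $n-k-q$ lies in $[n-k-s+1,n-k]$, while if $n-k-s+2\leq q\leq n-k$ then $n-k-q$ lies in $[0,s-2]\subseteq[0,s-1]$; in both cases the dual smooth group vanishes by hypothesis. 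Hence $H^{r,q}_{c,cur}(M)=0$ for $0\leq q\leq s-1$ and for $n-k-s+2\leq q\leq n-k$.

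Step two: transfer to $L^p$. For the small-degree range $0\leq q\leq s-1$, Corollary \ref{isomcompact1} provides the isomorphism $\Psi_{c,q}^p\colon H^{r,q}_{c,L^p}(M)\to H^{r,q}_{c,cur}(M)$, so the vanishing of the target forces $H^{r,q}_{c,L^p}(M)=0$. For the large-degree range $n-k-s+2\leq q\leq n-k$, Corollary \ref{isomcompact2} (this is where the assumption $p>1$ is used) gives the corresponding isomorphism $\Psi_{c,q}^p$, and again $H^{r,q}_{c,cur}(M)=0$ yields $H^{r,q}_{c,L^p}(M)=0$. Combining the two ranges gives the claimed vanishing.

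The argument is short, so the only real point of care is the asymmetry between the two conclusion ranges: the small-degree side reaches down to $q=0$ because the current-to-$L^p$ map is an isomorphism throughout $0\leq q\leq s-1$, whereas on the large-degree side one obtains vanishing only for $q\geq n-k-s+2$. This loss of one degree is forced both by Serre duality, where $H^{r,q}_{c,cur}(M)$ is identified with a dual smooth group only down to $n-k-s+2$, and by Corollary \ref{isomcompact2}, whose isomorphism likewise holds only for $q\geq n-k-s+2$ (at $q=n-k-s+1$ the transfer map is merely surjective). I expect this degree-matching to be the main thing to verify carefully; the Hausdorff hypotheses needed to invoke the Serre duality of \cite{LaLedualiteCR} are automatic here, since every cohomology group in sight is assumed to be zero.
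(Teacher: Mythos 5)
Your proposal is correct and follows essentially the same route as the paper: the paper's proof consists precisely of combining the displayed Serre-duality implication (hypothesis $\Rightarrow$ $H^{r,q}_{c,cur}(M)=0$ in the stated ranges) with the injectivity of the maps $\Psi^p_{c,q}$ from Corollaries \ref{isomcompact1} and \ref{isomcompact2}. Your degree bookkeeping under $q\mapsto n-k-q$ and the observation that the required Hausdorff hypotheses hold trivially because the relevant groups vanish are exactly the points the paper leaves implicit.
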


 We will use the following notations:
\begin{align*}
&Z^{r,q}_\infty(M)=\ec^{r,q}(M)\cap\ker\opa_b,\quad &E^{r,q}_\infty(M)=\ec^{r,q}(M)\cap\im\opa_b\\
&Z^{r,q}_{c,\infty}(M)=\dc^{r,q}(M)\cap\ker\opa_b,\quad &E^{r,q}_{c,\infty}(M)=\dc^{r,q}(M)\cap\im\opa_b\\
&Z^{r,q}_{cur}(M)=\dc'^{r,q}(M)\cap\ker\opa_b,\quad &E^{r,q}_{cur}(M)=\dc'^{r,q}(M)\cap\im\opa_b\\
&Z^{r,q}_{c,cur}(M)=\ec'^{r,q}(M)\cap\ker\opa_b,\quad &E^{r,q}_{c,cur}(M)=\ec'^{r,q}(M)\cap\im\opa_b\\
&Z^{r,q}_{L^p_{loc}}(M)=(L^p_{loc})^{r,q}(M)\cap\ker\opa_b,\quad &E^{r,q}_{L^p_{loc}}(M)=(L^p_{loc})^{r,q}(M)\cap\im\opa_b\\
&Z^{r,q}_{c,L^p}(M)=(L^p_c)^{r,q}(M)\cap\ker\opa_b,\quad &E^{r,q}_{c,L^p}(M)=(L^p_c)^{r,q}(M)\cap\im\opa_b
\end{align*}

In the $L^p$ setting the Serre duality can be expressed in the following way:
\begin{thm}\label{serre}
Let $M$ be a $\ci$-smooth, generic CR submanifold of real codimension $k$ in a $n$-dimensional complex manifold $X$. Let $r,q\in\nb$ with $0\leq r\leq n$ and $0\leq q\leq n-k$, then for $p,p'\in\rb$ with $\frac{1}{p}+\frac{1}{p'}=1$ the following assertions are equivalent:

(i) $E^{n-r,n-k-q}_{c,L^p}(M)=\{g\in (L^p_c)^{n-r,n-k-q}(M)~|~<g,f>=0, \forall f\in Z^{r,q}_{L^{p'}_{loc}}(M)\}$

(ii) $H^{n-r,n-k-q}_{c,L^p}(M)$ is Hausdorff

(iii) $E^{r,q+1}_{L^{p'}_{loc}}(M)=\{g\in (L^{p'}_{loc})^{r,q+1}(M)~|~<g,f>=0, \forall f\in Z^{n-r,n-k-q-1}_{c,L^p}(M)\}$

(iv) $H^{r,q+1}_{L^{p'}_{loc}}(M)$ is Hausdorff.

{\parindent=0pt If these assertions hold, then}
\begin{align*}
&H^{n-r,n-k-q}_{c,L^p}(M)\sim \big(H^{r,q}_{L^{p'}_{loc}}(M)\big)'\\
&H^{r,q+1}_{L^p_{loc}}(M)\sim \big(H^{n-r,n-k-q-1}_{c,L^{p'}}(M)\big)'.
\end{align*}

\end{thm}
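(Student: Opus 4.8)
The plan is to deduce Theorem \ref{serre} from the abstract Serre duality theorem for dual complexes of closed, densely defined operators, namely Theorem 1.5 of \cite{LaLedualiteCR}, applied to the pair of complexes $\big((L^p_{loc})^{r,\bullet}(M),\opa_b\big)$ and $\big((L^{p'}_c)^{n-r,\bullet}(M),\opa_b\big)$. First I would verify that these two complexes form a dual pair in the required sense. As recalled in the setup of this section, for $1<p<\infty$ the space $(L^{p'}_c)^{n-r,n-k-q}(M)$ is the topological dual of the Fr\'echet space $(L^p_{loc})^{r,q}(M)$, with pairing $<f,g>=\int_M f\wedge g$, and since $\ec^{r,q}(M)$ is dense in $(L^p_{loc})^{r,q}(M)$ both spaces are reflexive (this is where $1<p<\infty$ enters, both $p$ and $p'$ being finite). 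The essential point to check is that the unbounded operator $\opa_b$ on the $(L^{p'}_c)$-complex is exactly the transpose of the unbounded operator $\opa_b$ on the $(L^p_{loc})$-complex, with matching domains; this follows from the very definition of $\opa_b$ on currents as the transpose of $\opa_b$ on test forms together with the density statement, and it makes $\ker\opa_b$ on one side the annihilator of $\im\opa_b$ on the other.

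Then I apply Theorem 1.5. For such a dual pair it yields, in each degree, the equivalence of the weak closedness of the image of the differential, equivalently its equality with the annihilator of the kernel of the dual differential, with the Hausdorffness of the corresponding cohomology, together with the analogous statements on the dual side and the duality isomorphism of cohomology groups when these hold. Translating to the present indices: the closedness of $\im\opa_b$ in degree $n-k-q$ of the $(L^p_c)$-complex gives the equivalence of (i) and (ii), the point being that (i) is precisely the identity $E^{n-r,n-k-q}_{c,L^p}(M)=\big(Z^{r,q}_{L^{p'}_{loc}}(M)\big)^{\perp}$, one inclusion of which is automatic: if $g=\opa_b h$ with $h\in(L^p_c)^{n-r,n-k-q-1}(M)$ and $\opa_b f=0$, then $<g,f>=<\opa_b h,f>=\pm<h,\opa_b f>=0$, the compact support of $h$ making the integration by parts legitimate. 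Symmetrically, the closedness of $\im\opa_b$ in degree $q+1$ of the $(L^{p'}_{loc})$-complex gives the equivalence of (iii) and (iv). The heart of the matter is the cross equivalence (ii)$\Leftrightarrow$(iv): the operator whose closed range controls (iv), namely $\opa_b\colon(L^{p'}_{loc})^{r,q}(M)\to(L^{p'}_{loc})^{r,q+1}(M)$, is exactly the transpose of the operator whose closed range controls (ii), namely $\opa_b\colon(L^p_c)^{n-r,n-k-q-1}(M)\to(L^p_c)^{n-r,n-k-q}(M)$, so (ii)$\Leftrightarrow$(iv) is the closed range theorem in this pair. Finally, under any of the equivalent conditions, Theorem 1.5 delivers the two displayed isomorphisms, the degree bookkeeping $q\leftrightarrow n-k-q$ and $q+1\leftrightarrow n-k-q-1$ being the usual Serre pairing of complementary degrees.

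The main obstacle is precisely the closed range theorem for unbounded operators between these non-Banach spaces (a Fr\'echet space and the dual of a Fr\'echet space), which underlies (ii)$\Leftrightarrow$(iv); this is what is packaged in Theorem 1.5 of \cite{LaLedualiteCR}, so once the dual-pair hypotheses are in place the equivalences and isomorphisms follow formally. The one genuinely delicate verification left to us is therefore the reduction to those hypotheses: the reflexivity of the spaces and, above all, the identification of the distributional $\opa_b$ on $(L^{p'}_c)$-forms with the functional-analytic transpose of $\opa_b$ on $(L^p_{loc})$-forms, including the equality of their domains of definition, a point that rests on the density of smooth forms and on the definition of the $\opa_b$ operator for currents.
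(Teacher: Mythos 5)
Your proposal takes essentially the same route as the paper: it identifies $\big((L^p_{loc})^{r,\bullet}(M),\opa_b\big)$ and $\big((L^{p'}_c)^{n-r,\bullet}(M),\opa_b\big)$ as dual complexes (via the density of smooth forms and the transpose description of $\opa_b$ on currents) and then invokes the abstract Serre duality theorem of Laurent-Thi\'ebaut and Leiterer to obtain the four equivalences and the two duality isomorphisms. The only discrepancy is the citation: the equivalence-of-Hausdorffness statement you need is Theorem 1.6 of \cite{LaLedualite}, which is what the paper applies (also noting that the $(L^p_{loc})$ complex consists of Fr\'echet-Schwartz spaces), whereas Theorem 1.5 of \cite{LaLedualiteCR} is used in this paper only for the unconditional isomorphisms between separated ($\sigma$-)cohomology groups and duals of the cohomology of the dual complex, not for the equivalences (i)--(iv).
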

\begin{proof}
Since for each $0\leq r\leq n$ the complexes $\big((L^p_{loc})^{r,\bullet}(M),\opa_b\big)$
and $\big((L^{p'}_c)^{n-r,\bullet}(M),\opa_b\big)$ are dual complexes and $\big((L^p_{loc})^{r,\bullet}(M),\opa_b\big)$ is a complex of Fréchet-Schwartz spaces, we can apply Theorem 1.6 of \cite{LaLedualite}.
\end{proof}

As in \cite{LaLedualite}, we will use Serre duality to get some density theorems for the spaces of $L^p$ forms.

\begin{thm}\label{densite1}
Let $M$ be a $\ci$-smooth, connected, non compact, generic, $1$-concave CR submanifold of real codimension $k$ in a $n$-dimensional complex manifold $X$  and $p>1$ a real number. For any integer $r$, $0\leq r\leq n$, the space $Z^{r,n-k-1}_\infty(M)$ of $\ci$-smooth, $\opa_b$-closed, $(r,n-k-1)$-forms is dense in the space $Z^{r,n-k-1}_{L^p_{loc}}(M)$ of $\opa_b$-closed, $(r,n-k-1)$-forms with $L^p_{loc}$ coefficients for the $L^p_{loc}$ topology.
\end{thm}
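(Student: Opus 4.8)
The plan is to deduce the density from Serre duality by a Hahn--Banach argument, exactly as in the complex case treated in \cite{LaLedualite}. Since $p>1$, the space $(L^p_{loc})^{r,n-k-1}(M)$ is Fr\'echet with dual $(L^{p'}_c)^{n-r,1}(M)$ for the pairing $\langle f,g\rangle=\int_M f\wedge g$, and $Z^{r,n-k-1}_{L^p_{loc}}(M)$ is a closed subspace of it. Hence density of $Z^{r,n-k-1}_\infty(M)$ in $Z^{r,n-k-1}_{L^p_{loc}}(M)$ is equivalent, by Hahn--Banach, to the assertion that every $g\in(L^{p'}_c)^{n-r,1}(M)$ with $\langle f,g\rangle=0$ for all $f\in Z^{r,n-k-1}_\infty(M)$ already satisfies $\langle f,g\rangle=0$ for all $f\in Z^{r,n-k-1}_{L^p_{loc}}(M)$. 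So I fix such a $g$. Testing the hypothesis against $f=\opa_b v$ with $v\in\ec^{r,n-k-2}(M)$ and using that $\opa_b$ on currents is the transpose of $\opa_b$ on smooth forms yields $\langle v,\opa_b g\rangle=0$ for all $v\in\dc^{r,n-k-2}(M)$, hence $\opa_b g=0$, i.e. $g\in Z^{n-r,1}_{c,L^{p'}}(M)$. Conversely, if $g$ is $\opa_b$-exact, say $g=\opa_b h$ with $h\in(L^{p'}_c)^{n-r,0}(M)$, then for any $f\in Z^{r,n-k-1}_{L^p_{loc}}(M)$ one gets $\langle f,g\rangle=\langle f,\opa_b h\rangle=\pm\langle\opa_b f,h\rangle=0$, which is precisely what is needed. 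The whole problem therefore reduces to the claim: a $\opa_b$-closed $g\in(L^{p'}_c)^{n-r,1}(M)$ annihilating $Z^{r,n-k-1}_\infty(M)$ is $\opa_b$-exact in $L^{p'}$.

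To prove this claim I would first show that the class of $g$ is cohomologically trivial. The pairing $\langle f,g\rangle$ descends to cohomology, because $g$ is closed and the smooth closed forms absorb the $\opa_b$-exact ones; since $g$ kills \emph{every} element of $Z^{r,n-k-1}_\infty(M)$, the induced functional on $H^{r,n-k-1}_\infty(M)$ is identically zero. By the Serre duality isomorphism $^\sigma H^{n-r,1}_{c,cur}(M)\cong\big(H^{r,n-k-1}_\infty(M)\big)'$ recalled above, the image of $[g]$ in the \emph{separated} cohomology $^\sigma H^{n-r,1}_{c,cur}(M)$ must vanish; that is, $g$ lies in the closure of $E^{n-r,1}_{c,cur}(M)$. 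Granting that this class is genuinely zero, so that $g=\opa_b S$ for a compactly supported current $S$, I would then invoke the injectivity of $\Psi^{p'}_{c,1}$ from Corollary \ref{isomcompact1} (equivalently Corollary \ref{Lpcompact} in bidegree $(n-r,1)$, applicable since $1\le s$): the current-exactness of $g$ upgrades to $\opa_b$-exactness of $g$ in $(L^{p'}_c)^{n-r,0}(M)$, which finishes the claim and hence the proof.

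The main obstacle is exactly the passage from ``$[g]=0$ in the separated cohomology'' to ``$[g]=0$'', i.e. the Hausdorff (closed range) property of $H^{n-r,1}_{c,cur}(M)$, which by Theorem 3.2 of \cite{LaLedualiteCR} is equivalent to the Hausdorffness of the top-degree group $H^{r,n-k}_\infty(M)$. This is where the hypotheses \emph{connected}, \emph{non compact} and \emph{$1$-concave} are used. By analytic continuation in $1$-concave manifolds, as in Proposition \ref{d1}, a compactly supported CR current on a connected non compact $M$ vanishes, so $H^{n-r,0}_{c,cur}(M)=0$; inserting this into the isomorphism $^\sigma H^{r,n-k}_\infty(M)\cong\big(H^{n-r,0}_{c,cur}(M)\big)'$ gives $^\sigma H^{r,n-k}_\infty(M)=0$, i.e. the smooth $\opa_b$-exact top-forms are dense. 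It remains to promote this density to a closed range statement, which is precisely the top-degree Andreotti--Grauert solvability available for $1$-concave manifolds and which I would quote from \cite{HiNa1}, \cite{Nac}, \cite{NacVa}, \cite{BaLa}; once $H^{r,n-k}_\infty(M)$ is Hausdorff the chain above closes up. I would also observe that the argument can be kept entirely inside the $L^p$ framework by applying Theorem \ref{serre} with $q=n-k-1$ (and $p,p'$ interchanged): the equivalence (ii)$\Leftrightarrow$(iv) reduces everything to the Hausdorffness of $H^{r,n-k}_{L^p_{loc}}(M)$, and (i) then identifies the annihilator of $Z^{r,n-k-1}_{L^p_{loc}}(M)$ with $E^{n-r,1}_{c,L^{p'}}(M)$, yielding the claim directly.
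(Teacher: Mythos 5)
Your overall architecture coincides with the paper's: the Hahn--Banach reduction to showing that every $g\in(L^{p'}_c)^{n-r,1}(M)$ orthogonal to $Z^{r,n-k-1}_\infty(M)$ is $\opa_b$-exact in $(L^{p'}_c)^{n-r,\bullet}(M)$, the use of Serre duality to get exactness at the level of compactly supported currents, the regularity upgrade from current exactness to $L^{p'}$ exactness with compact support via Theorem \ref{isomplonge}(ii) (equivalently Corollary \ref{Lpcompact} with $q=1\leq s$), and the final integration by parts $\langle \opa_b h,f\rangle=\langle h,\opa_b f\rangle=0$. All of these steps are correct and are exactly what the paper does.

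The gap is the crucial input that makes the Serre duality step work: the Hausdorff property (indeed the vanishing) of the top-degree group $H^{r,n-k}_\infty(M)$. The paper gets this by quoting the Malgrange vanishing theorem for $1$-concave CR manifolds (\cite{LaLeMalgrange}): for $M$ connected, non compact and $1$-concave, $H^{r,n-k}_\infty(M)=0$; this is what turns ``$[g]=0$ in the separated cohomology'' into $g\in E^{n-r,1}_{c,cur}(M)$. You instead try to manufacture this property on the spot. The density half of your argument (${}^\sigma H^{r,n-k}_\infty(M)=0$, deduced from $H^{n-r,0}_{c,cur}(M)=0$ by analytic continuation and duality) is fine, but the closed-range half cannot be ``quoted from \cite{HiNa1}, \cite{Nac}, \cite{NacVa}, \cite{BaLa}'': those references provide only local solvability (the Poincar\'e lemma and local homotopy formulas) or global statements under compactness or special exhaustion hypotheses (\cite{HiNa1}, Theorems 4.1 and 6.1), none of which hold for a general connected non compact $1$-concave $M$ --- this is precisely the trichotomy the paper spells out right after the theorem. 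Local solvability by itself never yields closed range of $\opa_b$ on a non compact manifold (cohomology can fail to be Hausdorff even where the Poincar\'e lemma holds), and the top-degree closed-range/vanishing statement on non compact $1$-concave manifolds is a genuinely global theorem: it is Malgrange's theorem, the sole subject of \cite{LaLeMalgrange}. As written, your proof therefore reduces the statement to an unproved (and hard) assertion. A secondary flaw: your closing remark that one can stay entirely inside the $L^p$ framework via Theorem \ref{serre} is circular, since assertion (i) there characterizes the annihilator of $Z^{r,n-k-1}_{L^p_{loc}}(M)$, while your $g$ is only known to annihilate $Z^{r,n-k-1}_\infty(M)$; passing from the smaller to the larger space of test forms is exactly the density being proved.
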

\begin{proof}
By the Hahn-Banach theorem, it is sufficient to prove that for any $g\in (L^{p'}_c)^{n-r,1}(M)$ such that $<g,\varphi>=0$ for all $\varphi\in Z^{r,n-k-1}_\infty(M)$, we have $<g,f>=0$ for all $f\in Z^{r,n-k-1}_{L^p_{loc}}(M)$. Let
$$g\in (L^{p'}_c)^{n-r,1}(M)\cap \{T\in\ec'^{n-r,1}(M)~|~<T,\varphi>=0,~  \forall \varphi\in Z^{r,n-k-1}_\infty(M)\}.$$
 Malgrange's theorem \cite{LaLeMalgrange} claims that, under the assumptions of the theorem, $H^{r,n-k}_\infty(M)=0$, then by Serre duality (see Theorem 3.2 in \cite{LaLedualite}) we get that $g\in (L^{p'}_c)^{n-r,1}(M)\cap E^{n-r,1}_{c,cur}(M)$. From assertion (ii) in Theorem \ref{isomplonge}, we deduce that there exists $h\in (L^{p'}_c)^{n-r,0}(M)$ such that $g=\opa_b h$. Let $f\in Z^{r,n-k-1}_{L^p_{loc}}(M)$, then $<g,f>=<\opa_b h,f>=<h,\opa_b f>=0$.
\end{proof}

Adding the assumption that some cohomology group is Hausdorff to replace Malgrange's theorem the previous theorem can be extended to $s$-concave CR manifold, $s\geq 1$.

 \begin{thm}
Let $M$ be a $\ci$-smooth, generic, $s$-concave, $1\leq s\leq n-k$, CR submanifold of real codimension $k$ in an $n$-dimensional complex manifold $X$  and $p>1$ a real number. For any integers $r,q$, $0\leq r\leq n$ and $1\leq q\leq s$, assume that $H^{r,n-k-q+1}_\infty(M)$ is Hausdorff, then the space $Z^{r,n-k-q}_\infty(M)$ of $\ci$-smooth, $\opa_b$-closed, $(r,n-k-q)$-forms is dense in the space $Z^{r,n-k-q}_{L^p_{loc}}(M)$ of $\opa_b$-closed, $(r,n-k-q)$-forms with $L^p_{loc}$ coefficients for the $L^p_{loc}$ topology.
\end{thm}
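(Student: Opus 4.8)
The plan is to follow the argument of Theorem \ref{densite1}, replacing the vanishing $H^{r,n-k}_\infty(M)=0$ that Malgrange's theorem provides there by the present Hausdorff hypothesis on $H^{r,n-k-q+1}_\infty(M)$. First I would apply the Hahn-Banach theorem. Since the dual of $(L^p_{loc})^{r,n-k-q}(M)$ is $(L^{p'}_c)^{n-r,q}(M)$ for the pairing $<g,f>=\int_M g\wedge f$, it is enough to show that any $g\in (L^{p'}_c)^{n-r,q}(M)$ annihilating $Z^{r,n-k-q}_\infty(M)$ also annihilates $Z^{r,n-k-q}_{L^p_{loc}}(M)$. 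Fix such a $g$. As $E^{r,n-k-q}_\infty(M)\subset Z^{r,n-k-q}_\infty(M)$, the form $g$ annihilates every $\opa_b\psi$ with $\psi\in\ec^{r,n-k-q-1}(M)$; integrating by parts, which produces no boundary term because $g$ has compact support, forces $\opa_b g=0$ in the sense of currents. Thus $g\in Z^{n-r,q}_{c,L^{p'}}(M)$ and defines a cohomology class $[g]\in H^{n-r,q}_{c,cur}(M)$.

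The heart of the proof is to show that this class is zero. By the consequence of Theorem 3.2 of \cite{LaLedualiteCR} recalled in this section, the Hausdorff assumption on $H^{r,n-k-q+1}_\infty(M)$ is precisely equivalent to $H^{n-r,q}_{c,cur}(M)$ being Hausdorff, and in that case the Serre duality map
$$H^{n-r,q}_{c,cur}(M)\to \big(H^{r,n-k-q}_\infty(M)\big)'$$
is an isomorphism, hence injective. The hypothesis on $g$ says exactly that $[g]$ pairs to zero against every class of $H^{r,n-k-q}_\infty(M)$, so $[g]=0$ and $g=\opa_b S$ for some current $S\in\ec'^{n-r,q-1}(M)$ with compact support.

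It then remains to upgrade this current-level solution to an $L^{p'}$ one. Since $1\leq q\leq s$ and $g\in (L^{p'}_c)^{n-r,q}(M)$ is $\opa_b$-exact as a current, Corollary \ref{Lpcompact}, applied in bidegree $(n-r,q)$, yields $h\in (L^{p'}_c)^{n-r,q-1}(M)$ with $\opa_b h=g$. For any $f\in Z^{r,n-k-q}_{L^p_{loc}}(M)$ the duality between the $\opa_b$ operators of the two dual complexes then gives $<g,f>=<\opa_b h,f>=\pm<h,\opa_b f>=0$, which is the desired conclusion. The main obstacle is the middle step: converting the purely cohomological annihilation condition into genuine exactness of $g$ as a current. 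This is exactly where the Hausdorff hypothesis is indispensable, for without it one could only conclude that $[g]$ lies in the closure of the coboundaries, i.e. that $[g]=0$ in the separated cohomology $^\sigma H^{n-r,q}_{c,cur}(M)$, which would not produce an honest primitive $S$ and would break the subsequent passage to an $L^{p'}$ solution.
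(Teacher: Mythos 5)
Your proof is correct and coincides with the paper's intended argument: the paper gives no separate proof of this statement, saying only that the proof of Theorem \ref{densite1} extends once Malgrange's vanishing theorem is replaced by the Hausdorff hypothesis, and your write-up is precisely that adaptation. In particular, your use of Serre duality (the Hausdorff assumption on $H^{r,n-k-q+1}_\infty(M)$ being equivalent to that on $H^{n-r,q}_{c,cur}(M)$, giving injectivity of $H^{n-r,q}_{c,cur}(M)\to\big(H^{r,n-k-q}_\infty(M)\big)'$ and hence exactness of the annihilating form $g$ as a compactly supported current), followed by Corollary \ref{Lpcompact} to produce a compactly supported $L^{p'}$ primitive and conclude by integration by parts, matches step for step the paper's proof of Theorem \ref{densite1} with the indicated substitution.
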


Let us give some geometrical conditions on $M$ which ensure that, for all $0\leq r\leq n$, $H^{r,n-k-q+1}_\infty(M)$ is Hausdorff for some $1\leq q\leq n-k$.

1) If $M$ is compact and $s$-concave, by Theorem 4 in \cite{Hecras} or Theorem 4.1 in \cite{HiNa1}, $H^{r,n-k-q+1}_\infty(M)$ is finite dimensional hence Hausdorff for $1\leq q\leq s$ and $n-k-s+2\leq q\leq n-k$ and Hausdorff for $q=n-k-s+1$.

2) If $M$ is connected, non compact and $1$-concave, then by Theorem 0.1 in \cite{LaLeMalgrange}, $H^{r,n-k}_\infty(M)=0$.

3) The cohomology groups $H^{r,n-k-q+1}_\infty(M)$ are also finite dimensional hence Hausdorff for $1\leq q\leq s$, as soon as $M$ is $s$-concave and admits an exhausting function with good pseudoconvexity properties (see for example Theorem 6.1 in \cite{HiNa1}). This holds in particular  when $M$ of real dimension $2n-k$ is generically embedded in an n-dimensional pseudoconvex complex manifold $X$.

Let us end by a separation theorem for the $L^p$ cohomology with compact support in bidegree $(r,1)$, $0\leq r\leq n$.

\begin{thm}\label{resolcomp1}
Let $M$ be a $\ci$-smooth, connected, non compact, generic, $1$-concave CR submanifold of real codimension $k$ in a $n$-dimensional complex manifold $X$  and $p>1$ a real number. For any integer $r$, $0\leq r\leq n$, the cohomology groups $H^{r,1}_{c,L^p}(M)$ are Hausdorff. More precisely, if $f\in (L^p_c)^{r,1}(M)$ is a $\opa_b$-closed form such that $<f,\varphi>=0$ for all $\opa_b$-closed, $\ci$-smooth $(n-r,n-k-1)$-form $\varphi$ in a neighborhood of the support of $f$, there exists a unique $g\in (L^p_c)^{r,0}(M)$ such that $\opa_b g=f$ and if the support of $f$ is contained in the closure of some relatively compact domain $D$ of $M$ then ${\rm supp}~g\subset\ol D$.
\end{thm}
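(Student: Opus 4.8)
The plan is to extract the Hausdorff property of $H^{r,1}_{c,L^p}(M)$ from the Serre duality Theorem \ref{serre}, and then to combine the annihilator description furnished by that theorem with the density Theorem \ref{densite1} in order to solve $\opa_b g = f$ with the prescribed support. First I would apply Theorem \ref{serre} with $n-r$ in place of $r$ and with $q=n-k-1$: then the group appearing in (ii) becomes $H^{r,1}_{c,L^p}(M)$ while the one in (iv) becomes the top-degree group $H^{n-r,n-k}_{L^{p'}_{loc}}(M)$. By the equivalence (ii)$\Leftrightarrow$(iv), it thus suffices to check that this last group is Hausdorff. Here Malgrange's theorem \cite{LaLeMalgrange} gives $H^{n-r,n-k}_\infty(M)=0$ for the connected, non compact, $1$-concave manifold $M$, and since $s=1$ the vanishing statement following Theorem \ref{isomgd} (valid for $n-k-s+1\le q\le n-k$, i.e. $q=n-k$) yields $H^{n-r,n-k}_{L^{p'}_{loc}}(M)=0$. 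A zero space is trivially Hausdorff, so $H^{r,1}_{c,L^p}(M)$ is Hausdorff.

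Once (ii) is established, assertion (i) of Theorem \ref{serre} with the same indices reads
$E^{r,1}_{c,L^p}(M)=\{g\in(L^p_c)^{r,1}(M):\ <g,\psi>=0,\ \forall\,\psi\in Z^{n-r,n-k-1}_{L^{p'}_{loc}}(M)\}$.
Given $f$ as in the statement, I must therefore upgrade the hypothesis, which only asserts orthogonality to $\opa_b$-closed $\ci$-smooth $(n-r,n-k-1)$-forms defined near $\supp f$, into orthogonality to every $\psi\in Z^{n-r,n-k-1}_{L^{p'}_{loc}}(M)$. This is precisely where Theorem \ref{densite1} enters, applied with the exponent $p'$, which is $>1$ because $p>1$ is finite: given such a $\psi$, I approximate it in the $L^{p'}_{loc}$ topology by globally $\ci$-smooth, $\opa_b$-closed forms $\psi_j\in Z^{n-r,n-k-1}_\infty(M)$. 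Each $\psi_j$ is in particular smooth and $\opa_b$-closed in a neighborhood of $\supp f$, so $<f,\psi_j>=0$ by hypothesis; since $f$ has compact support, H\"older's inequality makes the pairing continuous for $L^{p'}_{loc}$ convergence, whence $<f,\psi>=\lim_j<f,\psi_j>=0$. Thus $f$ annihilates $Z^{n-r,n-k-1}_{L^{p'}_{loc}}(M)$, so $f\in E^{r,1}_{c,L^p}(M)$ and there exists $g\in(L^p_c)^{r,0}(M)$ with $\opa_b g=f$.

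It then remains to establish uniqueness and the support bound, which I would obtain by unique continuation exactly as in Proposition \ref{d1}. The difference of two solutions is a compactly supported CR $(r,0)$-form on the connected, non compact, $1$-concave manifold $M$; vanishing near infinity, it vanishes identically by analytic continuation, giving uniqueness. For the support, the solution $g$ is CR on $M\setminus\supp f\supset M\setminus\ol D$ and is zero near infinity, so analytic continuation forces it to vanish on $M\setminus\ol D$, i.e. $\supp g\subset\ol D$ as soon as $\supp f\subset\ol D$. I expect the one genuinely delicate point to be the passage from the local orthogonality hypothesis to the global annihilator condition required by Serre duality; this is resolved exactly by the density Theorem \ref{densite1}, after which the argument reduces to the index bookkeeping above and a standard unique continuation step.
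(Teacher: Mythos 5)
Your reduction of the Hausdorff property to the vanishing of $H^{n-r,n-k}_{L^{p'}_{loc}}(M)$ via Theorem \ref{serre}, Malgrange's theorem and Theorem \ref{isomgd}, your use of assertion (i) of Theorem \ref{serre} together with the density Theorem \ref{densite1} to produce a compactly supported solution $g$, and the uniqueness by analytic continuation, all coincide with the paper's own proof. The genuine gap is in the support statement. You argue that $g$ is CR on $M\setminus\ol D$ and ``zero near infinity'', so that analytic continuation forces $g=0$ on $M\setminus\ol D$. But unique continuation propagates only within connected components, so this reasoning reaches exactly those components of $M\setminus D$ that are \emph{not} relatively compact: on such a component $g$ vanishes on a nonempty open set (outside a large compact) and the argument applies. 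The theorem, however, does not assume $M\setminus D$ connected, and $M\setminus D$ may have relatively compact components (``holes''); on such a component $g$ is CR but is not known to vanish on any open subset, and your argument gives nothing there. This is precisely what distinguishes this theorem from Proposition \ref{d1}, where connectedness of $M\setminus D$ is an explicit hypothesis, and it is the content of the remark following the proof in the paper: orthogonality against globally defined forms in $Z^{n-r,n-k-1}_\infty(M)$ alone only yields the support conclusion when $M\setminus D$ is connected.

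The paper closes this gap with a puncturing trick which your argument cannot reproduce, because you only ever use orthogonality of $f$ against global forms -- never the full hypothesis that $<f,\varphi>=0$ for forms $\varphi$ that are smooth and $\opa_b$-closed merely in a neighborhood of ${\rm supp}~f$. Choose a point $z_i$ in each relatively compact component $D_i$ of $M\setminus D$, $i=1,\dots,l$, and set $\wt M=M\setminus\{z_1,\dots,z_l\}$. This manifold is still connected, non compact, generic and $1$-concave, and the orthogonality hypothesis for $f$ relative to $\wt M$ follows from the one relative to $M$, since a neighborhood of ${\rm supp}~f$ in $\wt M$ is also one in $M$; note that smooth $\opa_b$-closed forms on such neighborhoods need not extend across the points $z_i$, which is exactly why the local form of the hypothesis is essential. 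Running the existence argument on $\wt M$ then gives a solution $g$ whose support is a \emph{compact subset of $\wt M$}, hence $g$ vanishes in a neighborhood of each $z_i$. Now $g$ is CR on $M\setminus\ol D$, vanishes near $z_i$ inside each hole $D_i$, and vanishes outside a compact set in each unbounded component, so analytic continuation kills $g$ on all of $M\setminus\ol D$. Your last paragraph should be repaired along these lines; as written, it proves the support assertion only under the additional assumption that $M\setminus D$ has no relatively compact component.
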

\begin{proof}
Under the hypotheses of the theorem,  by Theorem 0.1 in \cite{LaLeMalgrange}, $H^{n-r,n-k}_\infty(M)=0$, for all $0\leq r\leq n$. Using Theorem \ref{isomgd}, we get $H^{n-r,n-k}_{L^p_{loc}}(M)=0$ for any $p>1$ and Serre duality (Theorem \ref{serre}) implies that the cohomology groups $H^{r,1}_{c,L^p}(M)$ are Hausdorff for all $0\leq r\leq n$ and any $p>1$. In the last assertion the existence of $g$ follows from (i) in Theorem \ref{serre} and from the density of  $Z^{n-r,n-k-1}_\infty(M)$ in $Z^{n-r,n-k-1}_{L^p_{loc}}(M)$ for the $L^p_{loc}$ topology proved in Theorem \ref{densite1}. The uniqueness of $g$ results from analytic continuation in connected $1$-concave CR manifolds and of the non compactness of $M$. To get the support condition let us replace the CR manifold $M$ by $\wt M=M\setminus\{z_1,\dots,z_l\}$, where the points $z_1,\dots,z_l$ are chosen such that $z_i\in D_i$, $i=1,\dots,l$, and the $D_i$'s, $i=1,\dots,l$, are the relatively compact connected components of $M\setminus D$. The new manifold $\wt M$ has the same properties than $M$ and we get that in fact the support of $g$ is a compact subset of $\wt M$, moreover $g$ is CR in $M\setminus\ol D$ and vanishes in a neighborhood of each $z_i$, $i=1,\dots,l$, therefore by analytic continuation it vanishes on $M\setminus\ol D$.
\end{proof}

Note that, if the orthogonality condition is only satisfied for the forms $\varphi\in Z^{n-r,n-k-1}_\infty(M)$ we have to assume as in Proposition \ref{d1}
that $M\setminus D$ is connected to get that the support of $g$ is
contained in $\ol D$.

\section{$L^p$-theory for the tangential Cauchy-Riemann operator}

Throughout this section $M$ will always be a $\ci$-smooth, generic CR submanifold of real codimension $k$ in a complex manifold $X$ of complex dimension $n$ and $D$ a relatively compact domain in $M$ with rectifiable boundary. We equip $X$ with an hermitian metric such that $T_{1,0}X~\bot~ T_{0,1}X$ and we consider on $M$ the induced metric then $H_{1,0}M~\bot~ H_{0,1}M$. We denote by $dV$ the volume form on $M$ associated to that metric.

For any real number $p\geq 1$, we define the space $L^p(D)$ as the set of all functions $f$ on $D$ with complex values such that $|f|^p$ is integrable on $D$.
Since $D\subset\subset X$, we have
$$\dc(D)\subset\ec(X)_{|_D}\subset L^p(D),$$
where $\ec(X)_{|_D}$ is the space of restrictions to $D$ of $\ci$-smooth functions on $X$ and $\dc(D)$ the subspace of $\ci$-smooth functions with compact support in $D$.
The space $L^p(D)$ is a Banach space, it is the completed space of $\dc(D)$ for the $\|.\|_p$-norm  defined by
$$\|f\|_p=(\int_D |f|^pdV)^\frac{1}{p}.$$
For  any fixed $r$, $0\leq r\leq n$, we consider the spaces $L^p_{r,q}(D)$ of $(r,q)$-forms, $0\leq q\leq n-k$,  with $L^p$ coefficients and we are going to define unbounded operators from $L^p_{r,q}(D)$ into $L^p_{r,q+1}(D)$ whose restriction to the space $\dc^{r,q}(D)$ will coincide with the tangential Cauchy-Riemann operator assocated to the CR structure on $M$.

Let us define $\opa_{b,c}~:~L^p_{r,q}(D)\to L^p_{r,q+1}(D)$ as the closed minimal extension of ${\opa_b}_{|_{\dc^{r,q}(D)}}$, it is a closed operator by definition and
$f\in Dom(\opa_{b,c})$ if and only if there exists a sequence $(f_\nu)_{\nu\in\nb}$ of forms in $\dc^{r,q}(D)$ and a form $g\in\dc^{r,q+1}(D)$ such that $(f_\nu)_{\nu\in\nb}$ converges to $f$ in $L^p_{r,q}(D)$ and $(\opa_b f_\nu)_{\nu\in\nb}$  converges to $g$ in $L^p_{r,q+1}(D)$, and we have $g=\opa_{b,c} f$.

We can also consider $\opa_{b,s}~:~L^p_{r,q}(D)\to L^p_{r,q+1}(D)$, the closed minimal extension of ${\opa_b}_{|_{\ec^{r,q}(M)_{|_D}}}$, it is also a closed operator and $f\in Dom(\opa_{b,s})$ if and only if there exists a sequence $(f_\nu)_{\nu\in\nb}$ of forms in $\ec^{r,q}(M)$ and a form $g\in\ec^{r,q+1}(M)$ such that $(f_\nu)_{\nu\in\nb}$ converges to $f$ in $L^p_{r,q}(D)$ and $(\opa_b f_\nu)_{\nu\in\nb}$  converges to $g$ in $L^p_{r,q+1}(D)$, and we have $g=\opa_{b,s} f$.

Since $L^p_{r,q}(D)$ is a subspace of the space of currents on $D$, the $\opa_b$ operator is defined on $L^p_{r,q}(D)$ in the weak sense, so we can define two operators $\opa_{b,\wt c}~:~L^p_{r,q}(D)\to L^p_{r,q+1}(D)$ and
$\opa_b~:~L^p_{r,q}(D)\to L^p_{r,q+1}(D)$ which coincides with the  $\opa_b$ in the sense of currents and whose domains of definition are  respectively
$$Dom(\opa_{b,\wt c})=\{f\in L^p_{r,q}(M)~|~{\rm supp}~f\subset\ol D, \opa_b f\in L^p_{r,q}(M)\}$$
and
$$Dom(\opa_b)=\{f\in L^p_{r,q}(D)~|~ \opa_b f\in L^p_{r,q}(D)\}.$$

When the boundary of $D$ is sufficiently smooth (at least Lipschitz), we get the following proposition, which is a direct consequence of Friedrichs' Lemma, which holds in $L^p$, $p\geq 1$. Its proof is the same as in the complex case (see lemma 4.3.2 in \cite{ChSh} and lemma 2.4 in \cite{LaShdualiteL2}).
\begin{prop}\label{friedrich}
Let $D$ be a relatively compact domain with Lipschitz boundary in a  CR submanifold $M$ of a complex manifold $X$ and $r,q$ be integers with $0\leq r\leq n$ and $0\leq q\leq n-k$.

(i) A differential form $f\in L^p_{r,q}(D)$ belongs to the domain of definition of the $\opa_b$ operator if and only if there exists a sequence  $(f_\nu)_{\nu\in\nb}$ of forms in $\ec^{r,q}(\ol D)$ such that the sequences $(f_\nu)_{\nu\in\nb}$ and $(\opa f_\nu)_{\nu\in\nb}$ converge respectively to $f$ and $\opa_b f$ in $\|.\|_p$ norm. Therefore $\opa_{b,s}=\opa_b$.

(ii) A differential form $f\in L^p_{r,q}(D)$  belongs to the domain of definition of the $\opa_{b,c}$ operator if and only if the forms $f_0$ and $\opa_b f_0$ are both in $L^p_*(M)$, where $f_0$ denotes the extension by $0$ of $f$ on $M\setminus D$. Therefore $\opa_{b,c}=\opa_{b,\wt c}$.
\end{prop}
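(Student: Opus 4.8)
The plan is to reduce both assertions to Friedrichs' regularization lemma, exactly as in the complex case. I treat (i) and (ii) separately, noting that in each equivalence one implication is immediate from the definitions and the substance lies in producing an approximating sequence by a translation-plus-mollification argument in which Friedrichs' lemma controls the relevant commutator.

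For (i), one direction is trivial: if a sequence $(f_\nu)$ in $\ec^{r,q}(\ol D)$ satisfies $f_\nu\to f$ and $\opa_b f_\nu\to\opa_b f$ in $\|\cdot\|_p$, then $L^p$ convergence implies convergence in the sense of currents, so $f\in\Dom(\opa_b)$ and the limits are compatible; this also places $f$ in the domain of $\opa_{b,s}$. The real content is the reverse inclusion: given $f$ with $f$ and $\opa_b f$ both in $L^p_{r,q}(D)$, I must build an approximating sequence smooth up to $\partial D$. First I would take a finite partition of unity subordinate to coordinate charts adapted to the Lipschitz boundary, so that in each boundary chart $D$ is the region lying below a Lipschitz graph. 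In such a chart I translate $f$ by an amount $\delta$ in a direction pointing out of $D$; the translated form $f_\delta$ is then defined on a full neighborhood of $\ol D$ inside the chart. I then mollify $f_\delta$ by convolution with a smooth approximate identity $\rho_\epsilon$, with $\epsilon\ll\delta$, obtaining forms smooth up to $\partial D$. Letting $\epsilon\to0$ and then $\delta\to0$ gives $f_{\delta,\epsilon}\to f$ in $L^p$; the key point is that $\opa_b f_{\delta,\epsilon}\to\opa_b f$ as well. This is precisely where Friedrichs' lemma enters, since $\opa_b$ has variable coefficients and $\opa_b(\rho_\epsilon*f_\delta)$ differs from $\rho_\epsilon*(\opa_b f_\delta)$ by the commutator $[\opa_b,\rho_\epsilon*]f_\delta$, which Friedrichs' lemma, valid in $L^p$ for every $p\geq1$, forces to $0$ in $\|\cdot\|_p$. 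Gluing back with the partition of unity yields the required sequence, hence $\opa_{b,s}=\opa_b$.

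For (ii) the scheme is the mirror image. If $f\in\Dom(\opa_{b,c})$ there is a sequence $(f_\nu)$ in $\dc^{r,q}(D)$ with $f_\nu\to f$ and $\opa_b f_\nu\to\opa_{b,c}f$ in $L^p$; extending by $0$ and using $(f_\nu)_0=f_\nu$, one gets that $f_0$ and $\opa_b f_0=(\opa_{b,c}f)_0$ lie in $L^p$ on $M$, which is the asserted condition. Conversely, assuming $f_0$ and $\opa_b f_0$ are both in $L^p(M)$, I pass to $f_0$ and, in each boundary chart, translate it by $\delta$ in a direction pointing into $D$, so that the support of the translated form is detached from $\partial D$ and lies strictly inside $D$; mollifying with $\epsilon\ll\delta$ then produces forms in $\dc^{r,q}(D)$. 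As before, Friedrichs' lemma guarantees that $\opa_b$ of these forms converges to $\opa_b f_0$ in $L^p$, so $f\in\Dom(\opa_{b,c})$; since $\Dom(\opa_{b,\wt c})$ consists exactly of the forms supported in $\ol D$ with $\opa_b f\in L^p$, this gives $\opa_{b,c}=\opa_{b,\wt c}$.

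I expect the main obstacle to be the Friedrichs commutator estimate itself, and in particular its validity at the endpoint $p=1$: one must verify that $[\opa_b,\rho_\epsilon*]f\to0$ in $L^p$ using only $f\in L^p$ and the smoothness of the coefficients of $\opa_b$, with no control on derivatives of $f$. The second delicate point is purely geometric: because $\partial D$ is merely Lipschitz, the inward and outward translation directions must be read off locally from the cone structure of the defining graph and then reconciled across the partition of unity, so that the translated forms are genuinely defined on a neighborhood of $\ol D$ for (i), respectively genuinely compactly supported in $D$ for (ii), uniformly in each chart.
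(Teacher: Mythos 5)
Your proposal is correct and coincides with the paper's own proof: the paper treats the proposition as a direct consequence of Friedrichs' lemma (valid in $L^p$ for all $p\geq 1$), citing the identical argument in the complex case (Lemma 4.3.2 in \cite{ChSh} and Lemma 2.4 in \cite{LaShdualiteL2}), which is precisely your scheme of localization by a partition of unity, outward translation for (i) and inward translation for (ii) adapted to the Lipschitz graph, mollification with $\epsilon\ll\delta$, and control of the commutator $[\opa_b,\rho_\epsilon*]$ by Friedrichs' lemma. Nothing further is needed.
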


We will denote respectively by $H^{r,q}_{L^p}(D)$ and $H^{r,q}_{\wt c,L^p}(D)$ the cohomology groups associated to the complexes $(L^p_{r,\bullet}(D),\opa_b)$ and $(L^p_{r,\bullet}(D),\opa_{b,\wt c})$.

\subsection{Andreotti-Grauert theory in $L^p$ for large degrees}\label{AG}

In this section we will extend to the CR case a part of the results obtained in section 2.2 in \cite{LaLp}.

We will consider the case when $D$ is the intersection of $M$ with a strictly pseudoconvex domain of $X$, more precisely
\begin{defin}
Let $M$  be a $\ci$-smooth, $s$-concave, $s\geq 1$, generic CR submanifold of real codimension $k$ in a complex manifold $X$ of complex dimension $n$. A relatively compact domain $D$ in $M$ is said to be \emph {completely strictly $s$-convex} if there exists a real, $\ci$-smooth, strictly plurisubharmonic function $\rho$ defined on a neighborhood $U$ of $\ol D$ in $X$ such that
$$D=M\cap\{x\in U~|~\rho(x)<0\}.$$
We will say that $D$ is \emph {strictly $s$-convex} if the function $\rho$ is only defined on a neighborhood of the boundary of $D$.
\end{defin}

When $X=\cb^n$, the solvability with $L^p$-estimates of the $\opa_b$-equation in completely strictly $s$-convex domains of a $s$-concave, $s\geq 1$, generic CR submanifold $M$ has been studied in \cite{LaSh}. In particular the following result is proven
\begin{thm}\label{local}
Let $M$  be a $\ci$-smooth, $s$-concave, $s\geq 1$, generic CR submanifold of real codimension $k$ in $\cb^n$ and $D$ a relatively compact strictly $s$-convex domain in $M$ with $\ci$-smooth boundary. If $p\geq 1$, there exists some bounded linear operators $T^r_{q-1}$ from $L^p_{r,q}(D)$ into $L^p_{r,q-1}(D)$, $0\leq r\leq n$, $n-k-s+1\leq q\leq n-k$, such that for any $f\in L^p_{r,q}(D)$ with $\opa_b f=0$ in $D$, we have $\opa_bT^r_{q-1} f=f$.
\end{thm}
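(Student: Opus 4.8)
The plan is to produce the operators $T^r_{q-1}$ as explicit integral operators coming from a Leray--Koppelman type homotopy formula adapted to the strictly $s$-convex domain $D$, and then to read off the $L^p$ bounds from the size of the kernel. Since this is exactly the content of \cite{LaSh}, I only indicate the architecture of the argument.

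First I would build an integral representation on $D$. The interior (Bochner--Martinelli--Koppelman) part is already available from \cite{BaLa}: the kernels $R_M^{r,q}$ used in Lemma \ref{poincareLp} give the homotopy identity \eqref{BM} locally. To globalize on $D$ one adds a boundary kernel manufactured from the defining function $\rho$. Because $\rho$ is strictly plurisubharmonic near $\pa D$, its Levi polynomial produces a Cauchy--Fantappi\`e (Leray) section $\Phi(\zeta,z)$, holomorphic in $z$, with $\rea\,\Phi(\zeta,z)\geq c\bigl(\rho(\zeta)-\rho(z)+|\zeta-z|^2\bigr)$ for $(\zeta,z)$ near $\pa D$. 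Combining this section, which controls the directions transverse to the negative eigenspaces of the Levi form, with the Bochner--Martinelli section along the $s$ negative directions furnished by $s$-concavity, one obtains a Leray--Koppelman kernel $\Omega(\zeta,z)$ on $M\times M$. Restricting the corresponding homotopy formula to $D$ gives, for $f\in L^p_{r,q}(D)$, the identity $f=\opa_b T^r_{q-1}f+T^r_q\opa_b f+Bf$, where $T^r_{q-1}$ and $T^r_q$ are the integral operators with kernel the appropriate components of $\Omega$ and $Bf$ is a boundary integral. The construction is arranged so that, precisely in the range $n-k-s+1\leq q\leq n-k$, the bidegree of the boundary component of $\Omega$ forces $Bf=0$; hence for $\opa_b$-closed $f$ one is left with $f=\opa_b T^r_{q-1}f$.

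It then remains to prove the $L^p$ estimates. Off the diagonal of $M\times M$ the kernel is smooth, so all the analysis is at the diagonal, where the Cauchy--Fantappi\`e numerator is $\opa_b$-smoothing and the Bochner--Martinelli factor is of weak type $\frac{2n}{2n-1}$ (cf. \cite{LaSh}, Lemma 4.2.2), exactly as in Lemma \ref{poincareLp}. On the relatively compact set $\ol D$ a kernel in weak-$L^{\frac{2n}{2n-1}}$ lies in $L^1$ uniformly in each variable, so $\sup_\zeta\int_D|K(\zeta,z)|\,dV(z)<\infty$ and $\sup_z\int_D|K(\zeta,z)|\,dV(\zeta)<\infty$. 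Schur's test then gives that $T^r_{q-1}$ is bounded from $L^p_{r,q}(D)$ into $L^p_{r,q-1}(D)$ for every $p\geq 1$, with norm depending only on $D$, $\rho$ and $p$.

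The principal difficulty is geometric rather than analytic: in contrast with the strictly pseudoconvex situation, the Levi form of $M$ is indefinite, so a single globally non-vanishing Leray section does not exist and $\Phi$ must be patched to the Bochner--Martinelli section across the $s$ negative eigendirections, all the while arranging that the boundary term of the homotopy formula vanishes exactly for $n-k-s+1\leq q\leq n-k$ and that the patched kernel keeps its weak type $\frac{2n}{2n-1}$ up to $\pa D$. This patching and the resulting uniform kernel estimates are the substantive part of \cite{LaSh}, which is why the statement is quoted from there.
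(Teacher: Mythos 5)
A preliminary remark on the comparison itself: the paper does not prove Theorem \ref{local} at all; it quotes it verbatim from \cite{LaSh}. So your decision to defer the substantive construction to that reference is consistent with the paper, and your overall architecture (interior Bochner--Martinelli-type kernels as in \cite{BaLa}, a Cauchy--Fantappi\`e section built from the Levi polynomial of the strictly plurisubharmonic defining function $\rho$, vanishing of the boundary obstruction term precisely in the range $n-k-s+1\leq q\leq n-k$, and $L^p$ bounds from weak-type kernel estimates) is indeed the architecture of \cite{LaSh}.

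There is, however, a genuine gap in the way you pass to $L^p$. In a Leray--Koppelman formula on a domain, it is not only the obstruction term $Bf$ that lives on the boundary: the solution operators themselves contain boundary integrals, namely the transition kernel between the Bochner--Martinelli section and the Leray section integrated against $f$ over $\pa D$ (these are the kernels $G_j$ of \cite{LaSh}, visible in the paper's proof of Theorem \ref{compact}). For $f$ merely in $L^p_{r,q}(D)$ such an integral is meaningless, since an $L^p$ form has no trace on $\pa D$; consequently the operator you propose to bound by Schur's test is not even defined on $L^p_{r,q}(D)$, and no estimate on volume kernels can repair this. The essential step of \cite{LaSh} --- reproduced in the paper when it needs these operators for Theorem \ref{compact} --- is to replace $G_j$ by a kernel $\wt G_j$ extended inside $D$, integrable together with its first derivatives in $\zeta$, and to apply Stokes' theorem, converting the boundary integrals into volume integrals against $\opa_b\varphi\wedge\wt G_j$ and $\varphi\wedge(\opa_b)_\zeta\wt G_j$; these new kernels are of weak type $\frac{2n+2}{2n}$ and $\frac{2n+2}{2n+1}$ (not $\frac{2n}{2n-1}$), and only after this conversion do Young/Schur-type arguments yield boundedness on $L^p$. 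Finally, since the homotopy formula is first established for forms smooth up to $\ol D$, one still needs a Friedrichs-type density argument (Proposition \ref{friedrich}(i), which uses the smoothness of $\pa D$): approximate a $\opa_b$-closed $f\in L^p_{r,q}(D)$ by smooth forms $f_\nu$ with $\opa_b f_\nu\to 0$ in $L^p$ and pass to the limit using the continuity of all the operators involved. Your sketch, which treats the solution kernels as if they were purely interior kernels of weak type $\frac{2n}{2n-1}$, skips exactly the two steps that make the $L^p$ statement true.
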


Let us go back to the case $X$ is a complex manifold.
\begin{thm}\label{compact}
Let $M$  be a $\ci$-smooth, $s$-concave, $s\geq 1$, generic CR submanifold of real codimension $k$ in a complex manifold $X$ of complex dimension $n$ and $D$ a relatively compact strictly $s$-convex domain in $M$ with $\ci$-smooth boundary. If $p\geq 1$, there exists some bounded linear operators $\wt T^r_{q-1}$ from $L^p_{r,q}(D)$ into $L^p_{r,q-1}(D)$ and compact operators $\wt K^r_q$ from $L^p_{r,q}(D)$ into itself, $0\leq r\leq n$, $n-k-s+1\leq q\leq n-k$, such that for any $f\in L^p_{r,q}(D)$ with $\opa_b f=0$ in $D$, we have
$$f=\opa_b\wt T^r_{q-1} f+\wt K^r_q f.$$
Therefore, for $0\leq r\leq n$ and $n-k-s+1\leq q\leq n-k$, the cohomology groups $H^{r,q}_{L^p}(D)$ are finite dimensional and $\opa_b(L^p_{r,q-1}(D))$ is a closed subspace of $L^p_{r,q}(D)$.
\end{thm}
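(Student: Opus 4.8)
The plan is to globalize the local solution operators of Theorem~\ref{local} by means of a partition of unity, paying for the globalization with an error term that turns out to be compact, and then to extract the cohomological statements from the Riesz theory of compact operators.

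First I would cover $\ol D$ by finitely many open coordinate charts $U_1\ld U_N$ of $X$, each biholomorphic to an open subset of $\cb^n$, chosen fine enough that for every $j$ there is a relatively compact strictly $s$-convex domain $D_j\subset D$ with $\ci$-smooth boundary, realized in the coordinates of $U_j$ as a strictly $s$-convex domain of the generic CR submanifold $M\cap U_j$ of $\cb^n$. For interior charts $D_j$ may be taken to be a small ball intersected with $M$, while for charts meeting $\pa D$ one caps off the corner $\pa D\cap\pa U_j$ inside $D$ by a regularized maximum of strictly plurisubharmonic functions; this is the standard, if slightly technical, construction of adapted strictly $s$-convex pieces. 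For $n-k-s+1\leq q\leq n-k$, Theorem~\ref{local} then furnishes bounded operators $T^{(j)}_{q-1}$ from $L^p_{r,q}(D_j)$ into $L^p_{r,q-1}(D_j)$ with $\opa_b T^{(j)}_{q-1}u=u$ for every $\opa_b$-closed $u$. I would fix a partition of unity $(\chi_j)_{1\leq j\leq N}$ with $\supp\chi_j\subset U_j$, $\supp\chi_j\cap D\subset D_j$, and $\sum_j\chi_j\equiv 1$ on a neighborhood of $\ol D$.

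For $f\in L^p_{r,q}(D)$ with $\opa_b f=0$ the restriction $f_{|D_j}$ is $\opa_b$-closed, so $g_j=T^{(j)}_{q-1}(f_{|D_j})$ satisfies $\opa_b g_j=f$ on $D_j$. Setting
$$\wt T^r_{q-1}f=\sum_{j=1}^N\chi_j g_j\quad\text{and}\quad \wt K^r_q f=-\sum_{j=1}^N(\opa_b\chi_j)\wedge g_j,$$
both forms being supported in $\cup_j D_j$ and so extended by $0$ to $D$, and using $\sum_j\opa_b\chi_j=\opa_b(\sum_j\chi_j)=0$ near $\ol D$, a direct computation gives $f=\opa_b\wt T^r_{q-1}f+\wt K^r_q f$ on $D$. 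Boundedness of $\wt T^r_{q-1}$ from $L^p_{r,q}(D)$ into $L^p_{r,q-1}(D)$ is immediate from the boundedness of the $T^{(j)}_{q-1}$ and the smoothness of the $\chi_j$. The main obstacle is the compactness of $\wt K^r_q$. Each summand $f\mapsto(\opa_b\chi_j)\wedge T^{(j)}_{q-1}(f_{|D_j})$ factors through the local solution operator $T^{(j)}_{q-1}$, which in the construction of \cite{LaSh} is an integral operator whose kernel is of weak type $\frac{2n}{2n-1}$ (cf. \cite{LaSh}, Lemma~4.2.2). On the relatively compact domain $D_j$ such a weakly singular integral operator is compact from $L^p$ into itself by the $L^p$ compactness criterion of \cite{Ho&al}; composing on the left with multiplication by the $\ci$-smooth compactly supported form $\opa_b\chi_j$ and on the right with the bounded restriction $f\mapsto f_{|D_j}$ preserves compactness. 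Hence $\wt K^r_q$, a finite sum of compact operators, is compact.

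Finally I would deduce the cohomological consequences. Write $Z=\ke\opa_b$ and $E=\im\opa_b=\opa_b(L^p_{r,q-1}(D))$ inside $L^p_{r,q}(D)$, so that $Z$ is a closed subspace and $H^{r,q}_{L^p}(D)=Z/E$. For $f\in Z$ one has $\opa_b\wt K^r_q f=\opa_b f-\opa_b\opa_b\wt T^r_{q-1}f=0$, so $\wt K^r_q$ maps $Z$ into $Z$ and is compact there, while $f-\wt K^r_q f=\opa_b\wt T^r_{q-1}f\in E$. By the Riesz--Schauder theory the operator $\id-\wt K^r_q$ is a Fredholm operator on the Banach space $Z$, hence its range is closed and of finite codimension in $Z$; since this range is contained in $E\subset Z$, the subspace $E$ is itself closed and of finite codimension. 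Therefore $\opa_b(L^p_{r,q-1}(D))=E$ is closed and $H^{r,q}_{L^p}(D)=Z/E$ is finite dimensional, for every $0\leq r\leq n$ and $n-k-s+1\leq q\leq n-k$.
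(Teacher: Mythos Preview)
Your argument is correct and reaches the same conclusion, but the globalization is organized differently from the paper's. The paper does \emph{not} build auxiliary strictly $s$-convex subdomains $D_j\subset D$ nor does it invoke Theorem~\ref{local} as a solution operator on closed forms. Instead it uses the full local \emph{homotopy} formula of \cite{LaSh}, Theorem~2.3.1, applied to the compactly supported forms $\chi_j f$ on $D\cap U_j$ (which are not $\opa_b$-closed); this yields a global homotopy $f=\opa_b T^r_{q-1}f+T^r_q\opa_b f+K^r_qf$ for smooth $f$. The local pieces involve boundary integrals $\int_{\pa D\cap U_j}\varphi\wedge G_j$, which the paper converts to volume integrals by Stokes' theorem using modified kernels $\wt G_j$, and then passes to $L^p$ forms by the density statement of Proposition~\ref{friedrich}. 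The compact remainder $\wt K^r_q$ in the paper is the sum of operators with kernels $R_j-(\opa_b)_\zeta\wt G_j$ acting on $\opa_b\chi_j\wedge f$, and compactness is proved by an explicit truncation argument \`a la \cite{Ho&al}, Lemma~2.20, using the weak-type estimates for these kernels.

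Your route is more modular: you feed the closed form $f|_{D_j}$ directly into the $L^p$ solution operator of Theorem~\ref{local}, so no Stokes manipulation and no appeal to Proposition~\ref{friedrich} are needed. The price is the construction of the capped domains $D_j$, which you correctly flag as technical, and the fact that you must go beyond the bare statement of Theorem~\ref{local} to use that the solution operators of \cite{LaSh} are genuine weakly singular integral operators (so that compactness follows). One small imprecision: the solution operator of \cite{LaSh} on a domain with boundary is not governed solely by the Bochner--Martinelli kernel of Lemma~4.2.2 of weak type $\frac{2n}{2n-1}$; it also contains the boundary contribution which, after the Stokes rewriting, has kernel $(\opa_b)_\zeta\wt G_j$ of weak type $\frac{2n+2}{2n+1}$. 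Both pieces are locally integrable, so your compactness claim is still valid, but the citation should point to section~4.2 of \cite{LaSh} rather than to Lemma~4.2.2 alone. The final Fredholm/Riesz--Schauder step is identical in both proofs.
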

\begin{proof}
First consider the case when $f$ is $\ci$-smooth up to the boundary. Using an open covering $(U_j)_{j\in J}$ of $\ol D$ and a partition $(\chi_j)_{j\in J}$ of  unity associated to this covering, we can derive a global homotopy formula from the local one in Theorem 2.3.1 in \cite{LaSh}. There exist integral operators $T^r_{q-1}$ and $K^r_{q}$ such that
$$T^r_{q-1} f=\sum_{j\in J} T_j(\chi_j f)\quad {\rm and} \quad K^r_{q} f=\sum_{j\in J} T_j(\opa_b(\chi_j)\wedge f)$$
and
$$f=\opa_b T^r_{q-1} f+T^r_{q}\opa_b f+K^r_{q} f.$$
The local operators $T_j$ are associated to local kernels $R_j$ and $G_j$ in the following way
$$T_j(\varphi)=(-1)^{(r+q)(k+1)+\frac{k(k-1)}{2}}[\int_{D\cap U_j}\varphi(\zeta)\wedge R_j(.,\zeta)+(-1)^{n+q+1}\int_{\pa D\cap U_j}\varphi(\zeta)\wedge G_j(.,\zeta)],$$
if $\varphi$ is a $\ci$-smooth $(r,q)$-form with compact support in $U_j$.
Following section 4 in \cite{LaSh}, we define new kernels $\wt G_j(z,\zeta)$ which are integrable and also their first derivatives in $\zeta$ for each fixed $z$ and moreover coincide with the kernels $G_j$ when $\zeta\in\pa D\cap U_j$ and $z\in D\cap U_j$. Applying Stokes formula we get
\begin{align*}
T_j(\varphi)=(-1)^{(r+q)(k+1)+\frac{k(k-1)}{2}}[\int_{D\cap U_j}\varphi(\zeta)\wedge R_j(.,\zeta)&+(-1)^{r+q+1}\int_{D\cap U_j}\opa_b\varphi(\zeta)\wedge \wt G_j(.,\zeta)\\
&-\int_{D\cap U_j}\varphi(\zeta)\wedge (\opa_b)_\zeta \wt G_j(.,\zeta)],
\end{align*}
if $\varphi$ is a $\ci$-smooth $(r,q)$-form with compact support in $U_j$.

This gives
\begin{equation}\label{homotopie}
f=\opa_b\big (\wt T^r_{q-1} f+\wt S^r_{q-1} \opa_b f\big )+\wt T^r_{q} \opa_b f+\wt K^r_q f+ \wt L^r_{q} \opa_b f,
\end{equation}
with
\begin{align*}
\wt T^r_{q-1} f=&\sum_{j\in J}(-1)^{(r+q)(k+1)+\frac{k(k-1)}{2}}[\int_{D\cap U_j}\chi_j(\zeta)f(\zeta)\wedge R_j(.,\zeta)\\
&+(-1)^{r+q+1}\int_{D\cap U_j}\opa_b\chi_j(\zeta)\wedge f(\zeta)\wedge \wt G_j(.,\zeta)-\int_{D\cap U_j}\chi_j(\zeta)f(\zeta)\wedge (\opa_b)_\zeta \wt G_j(.,\zeta)],
\end{align*}
$$\wt S^r_{q-1} \opa_b f=\sum_{j\in J}(-1)^{k(r+q)+1+\frac{k(k-1)}{2}}\int_{D\cap U_j}\chi_j(\zeta)\opa_b f(\zeta)\wedge \wt G_j(.,\zeta),$$
$$\wt L^r_{q} \opa_b f=\sum_{j\in J}(-1)^{k(r+q)+\frac{k(k-1)}{2}}\int_{D\cap U_j}\opa_b\chi_j(\zeta)\wedge\opa_b f(\zeta)\wedge \wt G_j(.,\zeta)$$
and
\begin{align*}
\wt K^r_{q} f=\sum_{j\in J}(-1)^{(r+q)(k+1)+\frac{k(k-1)}{2}}[&\int_{D\cap U_j}\opa_b\chi_j(\zeta)\wedge f(\zeta)\wedge R_j(.,\zeta)\\
&-\int_{D\cap U_j}\opa_b\chi_j(\zeta)\wedge f(\zeta)\wedge (\opa_b)_\zeta \wt G_j(.,\zeta)].
\end{align*}
It follows from section 4.2 in \cite{LaSh}, that the operators $\wt T^r_{q-1}$  are bounded linear operators from $L^p_{r,q}(D)$ into $L^p_{r,q-1}(D)$, the operators $\wt S^r_{q-1}$ are bounded linear operators from $L^p_{r,q+1}(D)$ into $L^p_{r,q-1}(D)$, the operators $\wt L^r_{q}$ are bounded linear operators from $L^p_{r,q+1}(D)$ into $L^p_{r,q}(D)$ and the operators $\wt K^r_{q}$ are bounded linear operators from $L^p_{r,q}(D)$ into itself since the kernels are respectively of weak type $\frac{2n}{2n-1}$, $\frac{2n+2}{2n}$ and $\frac{2n+2}{2n+1}$.

Assume now that $f\in L^p_{r,q}(D)$ satisfies $\opa_b f=0$ in $D$. By (i) from Proposition \ref{friedrich} there exists a sequence  $(f_\nu)_{\nu\in\nb}$ of forms in $\ec^{r,q}(\ol D)$ such that the sequence $(f_\nu)_{\nu\in\nb}$ converges to $f$ and the sequence $(\opa f_\nu)_{\nu\in\nb}$  converges to $0$ in $L^p$ norm on $D$.
Applying \eqref{homotopie} to the forms $f_\nu$ and letting $\nu$ tends to infinity, the continuity of the operators $\wt T^r_{q-1}$, $\wt S^r_{q-1}$, $\wt L^r_{q}$ and $\wt K^r_{q}$ in $L^p$ norm on $D$ implies
\begin{equation}\label{globalferme}
f=\opa_b\wt T^r_{q-1} f+\wt K^r_q f.
\end{equation}

It remains to prove that the operators $\wt K^r_q$ are compact operators from $L^p_{r,q}(D)$ into itself. Then $Id-\wt K^r_q$ will be a Fredholm operator from $Z^{r,q}_{L^p}(D)$, the space of $\opa_b$-closed $L^p$-forms on $D$, into itself whose range is contained in $\opa_b(L^p_{r,q-1}(D))$ by formula \eqref{globalferme} and therefore the dimension of the cohomology group $H^{r,q}_{L^p}(D)$ will be smaller than the codimension of the range of $Id-\wt K^r_q$ which is finite. The open mapping theorem gives then that $\opa_b(L^p_{r,q-1}(D))$ is a closed subspace of $L^p_{r,q}(D)$.

To prove the compacity of the operators $\wt K^r_q$ from $L^p_{r,q}(D)$ into itself, we will use the estimates of the kernel $(\opa_b)_\zeta G_j(z,\zeta)$ given in section 4.2 of \cite{LaSh} and follow the proof of Lemma 2.20 in \cite{Ho&al}. Since $\wt K^r_{q}=\sum_{j\in J}(-1)^{(r+q)(k+1)+\frac{k(k-1)}{2}+1} K_j$, with
$$K_jf=\int_{D\cap U_j}\opa_b\chi_j(\zeta)\wedge f(\zeta)\wedge \big (R_j(.,\zeta)-(\opa_b)_\zeta \wt G_j(.,\zeta)\big )$$
if $f$ is a $\opa_b$-closed form belonging to $L^p_{r,q}(D)$, it is sufficient to prove the compacity of each operator $K_j$ from $L^p_{r,q}(D)$ into itself.

After a good choice of coordinates the coefficients of the kernel $R_j(z,\zeta)$ and $(\opa_b)_\zeta G_j(z,\zeta)$, whose singularities are concentrated on the diagonal, are bounded by $H_s(\zeta-z)$, with $1\leq s\leq k$ for $R_j$ and $1\leq s\leq k+2$ for $(\opa_b)_\zeta G_j$, where
\begin{equation}\label{controle}
H_s(t)=\frac{1}{\Pi_{i=1}^s(|t_i|+|t|^2)|t|^{2n-k-s-1}}.
\end{equation}

Following \cite{Ho&al}, for any $\varepsilon>0$, we will decompose $K_j$ in two parts, $K_j=K_j^\varepsilon +K_j^b$ where $K_j^\varepsilon$ and $K_j^b$ are defined by
$$\wt K_j^b f(z)=\int_{D\cap\{|z-\zeta|>\varepsilon\}} \varphi(\zeta)\wedge \big (R_j(.,\zeta)-(\opa_b)_\zeta \wt G_j(.,\zeta)\big )$$
and
$$\wt K_j^\varepsilon f(z)=\int_{D\cap\{|z-\zeta|<\varepsilon\}} \varphi(\zeta)\wedge\big (R_j(.,\zeta)-(\opa_b)_\zeta \wt G_j(.,\zeta)\big )$$
for $\varphi$ with compact support in $U_j$.

Note that the kernel $R_j(z,\zeta)-(\opa_b)_\zeta \wt G_j(z,\zeta)$ is bounded on $D\cap U_j\cap\{|z-\zeta|>\varepsilon\}$ and therefore $K_j^b$ is Hilbert-Schmidt and hence compact.

We will prove that the norm of the operator $K_j^\varepsilon$ tends to zero when $\varepsilon$ tends to zero, which will implies that $K_j$ is a compact operator. By Young's inequality and the control of the kernels $R_j(z,\zeta)$ and $(\opa_b)_\zeta G_j(z,\zeta)$ by $H_s(\zeta-z)$, $1\leq s\leq k+2$, with $H_s$ defined by \eqref{controle}, it is sufficient to prove that $\int_{|t|<\varepsilon} H_s(t) dt_1\dots dt_{2n-k}$ tends to zero when $\varepsilon$ tends to zero, if $1\leq s\leq k+2$. But as $1\leq s\leq\frac{n-k}{2}$ and $1\leq s\leq k+2$, we have $2n-k-s>0$ and if $t'=(t_{s+1},\dots,t_{2n-k})$, then
$$\int_{|t|<\varepsilon} H_s(t) dt_1\dots dt_{2n-k}\leq \int_{|t|<\varepsilon} \frac{( \ln|t'|)^sdt_{s+1}\dots dt_{2n-k}}{|t'|^{2n-k-s-1}}\leq C\int_0^\varepsilon (\ln r)^sdr=\delta(\varepsilon),$$
using polar coordinates. Since the function $(\ln r)^s$ is integrable nearby the origine, $\delta(\varepsilon)\to 0$ when $\varepsilon\to 0$, which ends the proof.
\end{proof}

Exactly as in \cite{LaLp} the Grauert's bumping method can be developed in the $L^p$ setting for CR manifolds and it follows from Theorems \ref{local} and \ref{compact} that
\begin{thm}\label{isomLp}
Let $M$  be a $\ci$-smooth, $s$-concave, $s\geq 1$, generic CR submanifold of real codimension $k$ in a complex manifold $X$ of complex dimension $n$ and $D$ a relatively compact strictly $s$-convex domain in $M$ with $\ci$-smooth boundary. If $p\geq 1$, for any pair $(r,q)$ with $0\leq r\leq n$ and $n-k-s+1\leq q\leq n-k$,
$$H^{r,q}_{L^p}(D)\sim H^{r,q}_{L^p_{loc}}(D).$$
\end{thm}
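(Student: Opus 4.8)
The plan is to follow the Grauert bumping method of \cite{LaLp}, feeding in the local solution operators of Theorem \ref{local} and the global homotopy formula with compact remainder of Theorem \ref{compact} in place of their complex-analytic counterparts. First I would make the natural map explicit: since $D$ is relatively compact, $L^p_{r,q}(D)\subset (L^p_{loc})^{r,q}(D)$ and this inclusion commutes with $\opa_b$, so it induces a morphism of complexes and a natural map $H^{r,q}_{L^p}(D)\to H^{r,q}_{L^p_{loc}}(D)$, which is the isomorphism to be established for $n-k-s+1\leq q\leq n-k$. Writing $D=M\cap\{\rho<0\}$ with $\rho$ strictly plurisubharmonic, I would pick regular values $-\varepsilon_\nu\nearrow 0$ of $\rho_{|M}$ (possible by Sard) and set $D_\nu=M\cap\{\rho<-\varepsilon_\nu\}$, so that each $D_\nu$ is again a strictly $s$-convex domain with $\ci$-smooth boundary, $\overline{D_\nu}\subset\subset D$, and $D_\nu\nearrow D$.

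The heart of the argument is the bumping step: the restriction maps $H^{r,q}_{L^p}(D)\to H^{r,q}_{L^p}(D_\nu)$, and more generally $H^{r,q}_{L^p}(D_\nu)\to H^{r,q}_{L^p}(D_{\nu'})$ for $\nu<\nu'$, are isomorphisms in the range $n-k-s+1\leq q\leq n-k$. To prove this I would decompose the collar between two such domains into finitely many elementary bumps, each contained in a coordinate chart of $X$ in which the local pieces are strictly $s$-convex domains of $\cb^n$; there the bounded solution operators $T^r_{q-1}$ of Theorem \ref{local} solve $\opa_b u=f$ with $L^p$ control. A Mayer--Vietoris type gluing across each bump, by means of a partition of unity, then transports $\opa_b$-closed $L^p$ forms and their primitives between the smaller and the larger domain, the extra terms produced by the derivatives $\opa_b\chi$ of the cut-off functions being absorbed by the compact operators $\wt K^r_q$ of Theorem \ref{compact}. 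Since the same theorem guarantees that in this degree range the groups $H^{r,q}_{L^p}$ are finite dimensional and $\opa_b$ has closed range, the comparison is a well-defined map between Banach-space quotients, and the bumping construction shows it to be bijective.

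Finally I would pass to the limit. A form with $L^p_{loc}$ coefficients on $D$ is exactly a compatible family of its restrictions to the $D_\nu$, so that $(L^p_{loc})^{r,\bullet}(D)=\varprojlim_\nu L^p_{r,\bullet}(D_\nu)$ as complexes of Fr\'echet spaces, the restriction maps being the transition morphisms. Because the induced maps on cohomology are isomorphisms by the bumping step, the inverse system $\big(H^{r,q}_{L^p}(D_\nu)\big)_\nu$ is essentially constant, its $\varprojlim^1$ vanishes, and one gets $H^{r,q}_{L^p_{loc}}(D)\cong\varprojlim_\nu H^{r,q}_{L^p}(D_\nu)$; composing with the isomorphisms $H^{r,q}_{L^p}(D)\to H^{r,q}_{L^p}(D_\nu)$ identifies the natural map above with an isomorphism. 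I expect the main obstacle to be the bumping isomorphism itself, namely carrying Grauert's construction through in the $L^p$ category: one must check that the local $L^p$ estimates survive the gluing and that the cut-off error terms are genuinely compact, which is precisely the role of Theorem \ref{compact}. The passage to the projective limit is comparatively routine once the closed range, i.e. the Hausdorffness of the cohomology, is available.
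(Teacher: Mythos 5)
Your overall strategy --- exhaust $D$ by strictly $s$-convex sublevel sets $D_\nu$, prove that restriction induces isomorphisms on $L^p$ cohomology by bumping with the local operators of Theorem \ref{local}, and pass to the limit using closed range and finiteness from Theorem \ref{compact} --- is exactly the route the paper takes: its proof consists precisely of invoking the Grauert bumping method of \cite{LaLp} together with these two theorems. However, your execution has genuine gaps, and they concentrate at the bottom degree $q=n-k-s+1$ and in the limiting step. First, the projective-limit argument is off by one degree. Even granting Mittag-Leffler for the system of cochain spaces (which itself requires an argument, since extension by zero does not preserve $\Dom(\opa_b)$; one needs a cut-off trick), the Milnor sequence reads $0\to{\varprojlim}^1\, H^{r,q-1}_{L^p}(D_\nu)\to H^{r,q}_{L^p_{loc}}(D)\to\varprojlim_\nu H^{r,q}_{L^p}(D_\nu)\to 0$: the obstruction is ${\varprojlim}^1$ of the cohomology in degree $q-1$, not in degree $q$, so the ``essentially constant system'' justification you give addresses the wrong term. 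For $q\geq n-k-s+2$ the bumping isomorphisms in degree $q-1$ repair this; but for $q=n-k-s+1$ you would need to control $H^{r,n-k-s}_{L^p}(D_\nu)$, a degree in which neither Theorem \ref{local} nor Theorem \ref{compact} provides anything.

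Second, the bumping bijectivity itself is not established at the bottom degree by the argument you sketch. Across an elementary bump $D_1\subset D_2$, surjectivity of the restriction on $H^{r,q}_{L^p}$ does follow from Theorem \ref{local}: solve $\opa_b u=f$ on the local piece in degree $q$, and $f-\opa_b(\chi u)$ extends by zero across the bump. But injectivity by the same patching requires solving $\opa_b$ in degree $q-1$ on the local pieces, which Theorem \ref{local} supplies only when $q-1\geq n-k-s+1$, i.e.\ $q\geq n-k-s+2$. Note also that the compact operators $\wt K^r_q$ do not ``absorb'' the $\opa_b\chi$ error terms in the bumping; their role, via Theorem \ref{compact}, is to give finite dimensionality of $H^{r,q}_{L^p}$ and closedness of the range of $\opa_b$, i.e.\ the Hausdorff property. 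The ingredient that makes the method of \cite{LaLp} close at the critical degree is precisely what your sketch omits: a Runge-type approximation statement in degree $q-1$ across the bumps, combined with this closed-range property, allowing a Mittag--Leffler correction of the primitives that converges in $L^p_{loc}$. As written, your proof establishes the isomorphism only for $n-k-s+2\leq q\leq n-k$, not in the full range claimed by the theorem.
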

Note that associating Theorem \ref{isomgd} to Theorem \ref{isomLp}, we get that $H^{r,q}_\infty(D)=0$ implies $H^{r,q}_{L^p}(D)=0$.

As in the complex case (see Corollary 2.11 in \cite{LaLp}), using the previous remark, Theorem 6.1 in \cite{HiNa1} and Theorem \ref{local},  we get a vanishing theorem when $D$ is completely strictly $s$-convex.

\begin{cor}
Let $M$  be a $\ci$-smooth, $s$-concave, $s\geq 1$, generic CR submanifold of real codimension $k$ in a complex manifold $X$ of complex dimension $n$ and $D$ a relatively compact completely strictly $s$-convex domain in $M$ with $\ci$-smooth boundary. If $p\geq 1$, for any pair $(r,q)$ with $0\leq r\leq n$ and $n-k-s+1\leq q\leq n-k$,
$$H^{r,q}_{L^p}(D)=0.$$
More precisely, if  $0\leq r\leq n$ and $n-k-s+1\leq q\leq n-k$, there exists a continuous linear operator $T$ from the Banach space $Z^{r,q}_{L^p}(D)$ of $\opa_b$-closed $(r,q)$-forms with coefficients in $L^p(D)$ into $L^p_{r,q-1}(D)$ such that
$$\opa_b T f=f\quad {\rm for~ any}~ f\in Z^{r,q}_{L^p}(D).$$
\end{cor}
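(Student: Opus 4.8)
The plan is to deduce the $L^p$ vanishing from the smooth vanishing through the machinery already assembled in this section, and then to upgrade the abstract vanishing into a genuine bounded solution operator by combining the closedness of the range with the explicit homotopy formula of Theorem \ref{compact}.

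First I would establish the cohomological statement $H^{r,q}_{L^p}(D)=0$. Since $D$ is \emph{completely} strictly $s$-convex there is a strictly plurisubharmonic $\rho$ defined on a full neighbourhood of $\ol D$, so $D$ is a sublevel set $M\cap\{\rho<0\}$ of a global defining function; the smooth Andreotti--Grauert vanishing theorem (Theorem 6.1 in \cite{HiNa1}) then yields $H^{r,q}_\infty(D)=0$ for $0\leq r\leq n$ and $n-k-s+1\leq q\leq n-k$, this being a range where condition $Y(q)$ holds on an $s$-concave manifold. I would then invoke the remark preceding the statement, namely the combination of Theorem \ref{isomgd} (surjectivity of $\Phi_q^p$ in high degrees, whence $H^{r,q}_\infty(D)=0\Rightarrow H^{r,q}_{L^p_{loc}}(D)=0$) with Theorem \ref{isomLp} (the isomorphism $H^{r,q}_{L^p}(D)\sim H^{r,q}_{L^p_{loc}}(D)$ obtained by $L^p$ Grauert bumping from Theorems \ref{local} and \ref{compact}), to conclude $H^{r,q}_{L^p}(D)=0$ in the same range. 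This already gives the first assertion.

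For the ``more precisely'' part I would use that, by Theorem \ref{compact}, $\opa_b(L^p_{r,q-1}(D))$ is closed in $L^p_{r,q}(D)$ and the homotopy formula $f=\opa_b\wt T^r_{q-1}f+\wt K^r_qf$ holds on $Z^{r,q}_{L^p}(D)$, with $\wt T^r_{q-1}$ bounded and $\wt K^r_q$ compact. Applying $\opa_b$ to closed $f$ gives $\opa_b\wt K^r_qf=0$, so $\wt K^r_q$ maps the Banach space $Z^{r,q}_{L^p}(D)$ compactly into itself and $\id-\wt K^r_q$ is Fredholm of index $0$ there, with closed range contained in $\opa_b(L^p_{r,q-1}(D))=Z^{r,q}_{L^p}(D)$ by the vanishing just proved. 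On a closed complement of its finite-dimensional kernel the open mapping theorem furnishes a bounded inverse $S$ onto the range, and since $(\id-\wt K^r_q)g=\opa_b\wt T^r_{q-1}g$ for closed $g$ one gets $\opa_b(\wt T^r_{q-1}S\,\pi f)=\pi f$, where $\pi$ is the continuous projection onto the range. The finitely many remaining directions, a complement of the range inside $Z^{r,q}_{L^p}(D)$, are themselves exact by the vanishing, so fixing one $L^p$ primitive on each basis vector defines a finite-rank bounded operator handling them; summing the two contributions produces the desired continuous $T$ with $\opa_b Tf=f$.

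The technical heart is Theorem \ref{compact}, whose compactness of $\wt K^r_q$ rests on the weak-type $\frac{2n}{2n-1}$ and $\frac{2n+2}{2n+1}$ estimates for the Bochner--Martinelli kernels; granting it, the only genuine subtlety is functional-analytic. The main obstacle is that a continuous surjection of Banach spaces need not admit a continuous linear right inverse, so one cannot simply apply the open mapping theorem to $\opa_b:L^p_{r,q-1}(D)\to Z^{r,q}_{L^p}(D)$; it is precisely the Fredholm structure coming from $\wt K^r_q$, together with the exactness of the finite-dimensional cokernel directions, that circumvents this and yields a genuine bounded solution operator rather than a mere right inverse of the induced quotient map.
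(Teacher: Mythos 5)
Your proposal is correct, and its first half is precisely the paper's own argument: the paper disposes of the vanishing in one sentence by combining Theorem 6.1 of \cite{HiNa1} (smooth Andreotti--Grauert vanishing on the completely strictly $s$-convex domain $D$) with the remark preceding the corollary, namely that Theorem \ref{isomgd} applied to the $s$-concave CR manifold $D$ together with Theorem \ref{isomLp} gives the implication $H^{r,q}_\infty(D)=0\Rightarrow H^{r,q}_{L^p}(D)=0$. Where you genuinely diverge is the ``more precisely'' part. The paper gives no argument of its own there: it refers to the complex-case proof (Corollary 2.11 in \cite{LaLp}) and cites Theorem \ref{local}, so the intended construction of the solution operator passes through the explicit local integral operators of \cite{LaSh}, propagated by the same bumping machinery that underlies Theorems \ref{compact} and \ref{isomLp}, keeping $T$ essentially an integral operator. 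You instead extract $T$ abstractly from Theorem \ref{compact} alone: restrict the homotopy formula $f=\opa_b\wt T^r_{q-1}f+\wt K^r_qf$ to the Banach space $Z^{r,q}_{L^p}(D)$, observe that $\wt K^r_q$ preserves this space and is compact on it, invert $\id-\wt K^r_q$ on a closed complement of its finite-dimensional kernel via Riesz--Fredholm theory and the open mapping theorem, and treat the finite-dimensional complement of the range by choosing $L^p$ primitives of a basis, which exist by the vanishing already proved. This argument is complete and correct, since all its ingredients (compactness of $\wt K^r_q$, the homotopy formula on $Z^{r,q}_{L^p}(D)$, and the vanishing) are available at that point in the paper; and it has a merit you rightly single out: surjectivity of $\opa_b$ plus the open mapping theorem would only yield a bounded, possibly nonlinear, selection, because $\ker\opa_b$ need not be complemented in an $L^p$ space, so the Fredholm structure coming from $\wt K^r_q$ is exactly what produces a bounded \emph{linear} right inverse. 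The trade-off relative to the paper's route is that your operator is non-explicit, whereas the construction via Theorem \ref{local} retains the concrete kernel representation.
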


\subsection{Weak Cauchy problem in $L^p$ for the tangential Cauchy-Riemann operator}

In this section we will study some Cauchy problem. Let  $M$  be a $\ci$-smooth, generic CR submanifold of real codimension $k$ in a complex manifold $X$ of complex dimension $n$ and $D$ a relatively compact domain in $M$ with $\ci$-smooth boundary. For $p\geq 1$, given any $(r,q)$-form $f$ with coefficients in $L^p(M)$,  $0\leq r\leq n$ and $1\leq q\leq n-k$, such that
$${\rm supp}~f\subset\ol D \quad {\rm and}\quad \opa_b f=0~{\rm in~the~weak~sense~in~}M,$$
does there exists a $(r,q-1)$-forme $g$  with coefficients in $L^p(M)$ such that
$${\rm supp}~g\subset\ol D \quad {\rm and}\quad \opa_b g=f~{\rm in~the~weak~sense~in~}M~?$$
A positive answer to this problem is equivalent to the vanishing of the cohomology group $H^{r,q}_{\wt c,L^p}(D,E)$.

We will solve this problem using Serre duality. First observe that if $p,p'\in\rb$ satisfy $p>1$ and $\frac{1}{p}+\frac{1}{p'}=1$ and $r\in\nb$ satisfy $0\leq r\leq n$, the complexes $(L^p_{r,\bullet}(D),\opa_b)$ and $(L^{p'}_{n-r,\bullet}(D),\opa_{b,\wt c})$ are dual complexes.
\begin{prop}
Let $D$ be a relatively compact domain in $M$ with $\ci$-smooth boundary.

(i) The transpose map of the $\opa_{b,\wt c}$ operator from $L^p_{r,q}(D)$ into $L^p_{r,q+1}(D)$ is the operator $\opa_b=\opa_{b,s}$ from $L^{p'}_{n-r,n-k-q-1}(D)$ into $L^{p'}_{n-r,n-k-q}(D)$.

(ii) The transpose map of the $\opa_b$ operator from $L^p_{r,q}(D)$ into $L^p_{r,q+1}(D)$ is the operator $\opa_{\wt c}=\opa_{b,c}$ from $L^{p'}_{n-r,n-k-q-1}(D)$ into $L^{p'}_{n-r,n-k-q}(D)$.
\end{prop}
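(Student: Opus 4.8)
The plan is to read each transpose off from the duality pairing $\langle f,g\rangle=\int_D f\wedge g$, for which $L^p_{r,q}(D)$ and $L^{p'}_{n-r,n-k-q}(D)$ are dual to one another; since $p>1$ both spaces are reflexive. Recall that $\opa_{b,\wt c}=\opa_{b,c}$ is the minimal operator (the closure of $\opa_b$ on $\dc^{r,q}(D)$) by Proposition \ref{friedrich}(ii), while $\opa_{b,s}=\opa_b$ is the maximal operator by Proposition \ref{friedrich}(i); the content of (i) and (ii) is exactly that the Banach transpose interchanges these two operators, as in the $L^2$ theory of the minimal and maximal extensions. I would first prove (i) directly and then deduce (ii) from it by taking transposes once more.

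For (i), write $T=\opa_{b,\wt c}\colon L^p_{r,q}(D)\to L^p_{r,q+1}(D)$, so that $T^t$ acts from $L^{p'}_{n-r,n-k-q-1}(D)$ into $L^{p'}_{n-r,n-k-q}(D)$. For the inclusion $\opa_{b,s}\subset T^t$, take $g\in\Dom(\opa_{b,s})$ and $f\in\Dom(\opa_{b,\wt c})$, approximate $f$ by forms $f_\mu\in\dc^{r,q}(D)$ and $g$ by forms $g_\nu\in\ec^{n-r,n-k-q-1}(\ol D)$ using Proposition \ref{friedrich}, and apply Stokes' formula to $f_\mu\wedge g_\nu$: since $f_\mu$ has compact support in $D$ no boundary term appears, and passing to the limit gives the integration by parts identity $\langle\opa_{b,\wt c}f,g\rangle=\langle f,\opa_{b,s}g\rangle$ (the sign being the one already built into the definition of $\opa_b$ on currents in Section \ref{s1}). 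Hence $g\in\Dom(T^t)$ and $T^tg=\opa_{b,s}g$. For the reverse inclusion, let $g\in\Dom(T^t)$ with $h=T^tg\in L^{p'}_{n-r,n-k-q}(D)$; testing the relation $\langle\opa_{b,\wt c}f,g\rangle=\langle f,h\rangle$ against $f\in\dc^{r,q}(D)\subset\Dom(\opa_{b,\wt c})$ shows that $\opa_b g=h$ in the weak sense of currents on $D$. As $h\in L^{p'}(D)$, the current $\opa_b g$ is represented by an $L^{p'}$ form, so $g$ lies in the maximal domain $\Dom(\opa_b)$, which equals $\Dom(\opa_{b,s})$ by Proposition \ref{friedrich}(i). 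This yields $T^t=\opa_{b,s}=\opa_b$, which is (i).

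Assertion (ii) then follows by reflexivity. Applying (i) with $p$ replaced by $p'$ and $(r,q)$ by $(n-r,n-k-q-1)$ shows that the transpose of $\opa_{b,\wt c}\colon L^{p'}_{n-r,n-k-q-1}(D)\to L^{p'}_{n-r,n-k-q}(D)$ is $\opa_b\colon L^p_{r,q}(D)\to L^p_{r,q+1}(D)$. Since $\opa_{b,\wt c}$ is closed and densely defined and the spaces are reflexive, taking transposes once more gives $(\opa_b)^t=(\opa_{b,\wt c})^{tt}=\opa_{b,\wt c}=\opa_{b,c}$, which is precisely (ii). The main obstacle is the reverse inclusion in (i): one must rule out that $\Dom(T^t)$ is strictly larger than $\Dom(\opa_{b,s})$, and this is exactly where the two ingredients are essential: that $\dc^{r,q}(D)$ lies in $\Dom(\opa_{b,\wt c})$, so that testing recovers the weak $\opa_b g$, and Friedrichs' lemma (Proposition \ref{friedrich}), which identifies the maximal weak domain with the domain of $\opa_{b,s}$. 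Care is also needed that no boundary contribution appears in the integration by parts, which is guaranteed because the minimal operator is approximated by compactly supported forms.
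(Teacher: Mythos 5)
Your proof is correct and is essentially the paper's own argument: the paper's proof is a one-line reference to the complex case (Proposition 2.13 in \cite{LaLp}), and the argument there is exactly the one you write out --- integration by parts for smooth forms, the graph-norm approximations of elements of $\Dom(\opa_{b,\wt c})$ and $\Dom(\opa_{b,s})$ provided by the definitions and Proposition \ref{friedrich}, identification of the weak (maximal) domain with $\Dom(\opa_{b,s})$ via Friedrichs' lemma, and (ii) deduced from (i) by biduality of closed densely defined operators between reflexive spaces. The two delicate points, the absence of boundary terms (because the minimal operator is approximated by compactly supported forms) and the reverse inclusion $\Dom\big((\opa_{b,\wt c})^t\big)\subset\Dom(\opa_{b,s})$, are precisely the ones you address.
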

\begin{proof}
The proof is the same as in the complex case (see Proposition 2.13 in \cite{LaLp}).
\end{proof}

Let us recall the following abstract result on duality (see section 2.3 in \cite{LaLp} for more details).
\begin{prop}\label{dual}
Let $(E^\bullet,d)$ and $(E'_\bullet,d')$ be two dual complexes of reflexive Banach spaces with densely defined unbounded operators. Assume that $H_q(E'_\bullet)$ is Hausdorff and $H_{q+1}(E'_\bullet)=0$, then $H^{q+1}(E^\bullet)=0$.
\end{prop}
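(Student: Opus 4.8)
The plan is to deduce the vanishing $H^{q+1}(E^\bullet)=0$ from the two hypotheses by the standard functional-analytic duality between a complex of closed densely defined operators and its transpose complex. Write $d$ for the (closed, densely defined) differentials of $(E^\bullet,d)$ and $d'$ for those of $(E'_\bullet,d')$, so that $d'\colon E'_{j}\to E'_{j-1}$ is the transpose of $d\colon E^{j-1}\to E^{j}$. Since $E^\bullet$ and $E'_\bullet$ are reflexive, I may freely use the two facts I will rely on throughout: (a) the closed range theorem, namely $\im d$ is closed if and only if the image of its transpose is closed; and (b) the annihilator identities $\ke d'=(\im d)^\perp$ and $\ol{\im d}={}^\perp(\ke d')$. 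Proving $H^{q+1}(E^\bullet)=0$ amounts to showing both that $\im(d\colon E^q\to E^{q+1})$ is closed and that $\ke(d\colon E^{q+1}\to E^{q+2})\subset\im(d\colon E^q\to E^{q+1})$.

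First I would settle closedness. The Hausdorffness of $H_q(E'_\bullet)=\ke(d'\colon E'_q\to E'_{q-1})/\im(d'\colon E'_{q+1}\to E'_q)$ means exactly that $\im(d'\colon E'_{q+1}\to E'_q)$ is closed in $E'_q$ (the kernel above is closed, so closedness inside it is equivalent to closedness in $E'_q$). This latter operator is the transpose of $d\colon E^q\to E^{q+1}$, so by the closed range theorem (a) the image $\im(d\colon E^q\to E^{q+1})$ is closed in $E^{q+1}$; in particular $\ol{\im d}=\im d$ and $H^{q+1}(E^\bullet)$ is already Hausdorff.

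Next I would establish the inclusion. By (b) it suffices to prove $\ke(d\colon E^{q+1}\to E^{q+2})\subset{}^\perp\ke(d'\colon E'_{q+1}\to E'_q)$, because the right-hand side equals $\ol{\im(d\colon E^q\to E^{q+1})}=\im(d\colon E^q\to E^{q+1})$ by the previous step. Here the second hypothesis enters: $H_{q+1}(E'_\bullet)=0$ gives $\ke(d'\colon E'_{q+1}\to E'_q)=\im(d'\colon E'_{q+2}\to E'_{q+1})$. So let $x\in E^{q+1}$ with $dx=0$ and let $\phi\in\ke(d'\colon E'_{q+1}\to E'_q)$; writing $\phi=d'\psi$ with $\psi\in E'_{q+2}$ and using that $d'\colon E'_{q+2}\to E'_{q+1}$ is the transpose of $d\colon E^{q+1}\to E^{q+2}$,
$$\langle\phi,x\rangle=\langle d'\psi,x\rangle=\langle\psi,dx\rangle=0.$$
Thus $x\in{}^\perp\ke(d')=\im(d\colon E^q\to E^{q+1})$, which is the desired inclusion; combined with $d\circ d=0$ it yields $\ke(d\colon E^{q+1}\to E^{q+2})=\im(d\colon E^q\to E^{q+1})$, i.e. $H^{q+1}(E^\bullet)=0$.

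The heart of the matter, and the only delicate point, is the justification of the toolkit (a)--(b) for \emph{unbounded} closed densely defined operators rather than bounded maps: one must check that the transposes $d'$ are again closed and densely defined (this uses reflexivity), that the closed range theorem holds in this generality, and that the annihilator identities are stated with the correct pre-annihilator versus annihilator and with the right indexing of the two complexes. Once these standard points are in place, the cohomological argument above is purely formal; this is exactly the abstract duality packaged in section 2.3 of \cite{LaLp}, to which one may appeal.
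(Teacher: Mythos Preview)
Your argument is correct and is precisely the standard functional-analytic proof via the closed range theorem and annihilator identities for closed densely defined operators between reflexive Banach spaces. Note that the paper does not actually prove this proposition: it merely recalls it and refers to section~2.3 of \cite{LaLp} for details, so there is no in-paper proof to compare against; your write-up is in fact the argument that reference contains, and your closing paragraph correctly identifies the only points requiring care (density and closedness of the transposes, and the unbounded-operator version of the closed range theorem).
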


Let $p>1$ be a real number and $p'$ such that $\frac{1}{p}+\frac{1}{p'}=1$. Applying Proposition \ref{dual} to the complex $(E^\bullet,d)$ with, for some fixed $r$ such that $0\leq r\leq n$, $E^q=L^{p}_{r,q}(D)$ if $0\leq q\leq n-k$ and $E^q=\{0\}$ if $q<0$ or $q>n-k$, and $d=\opa_{\wt c}$, we get
\begin{thm}
Let  $M$  be a $\ci$-smooth, generic CR submanifold of real codimension $k$ in a complex manifold $X$ of complex dimension $n$ and $D$ a relatively compact domain in $M$ with $\ci$-smooth boundary. Let $p$ and $p'$ be two real numbers such that $p>1$ and $\frac{1}{p}+\frac{1}{p'}=1$ and $q$ an integer such that $1\leq q\leq n-k$. Assume that $H^{n-r,n-k-q+1}_{L^{p'}}(D)$ is Hausdorff and $H^{n-r,n-k-q}_{L^{p'}}(D)=0$, then
$$H^{r,q}_{\wt c,L^p}(D)=0.$$
\end{thm}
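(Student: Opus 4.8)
The plan is to obtain this as an immediate consequence of the abstract duality result Proposition \ref{dual}, precisely as the sentence preceding the statement anticipates; the entire task is to set up the right pair of dual complexes and to translate the homology groups occurring in Proposition \ref{dual} into the $L^p$ and $L^{p'}$ Dolbeault groups named in the hypotheses.

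First I would fix $r$ with $0\leq r\leq n$ and form the cochain complex $(E^\bullet,d)$ given by $E^q=L^p_{r,q}(D)$ for $0\leq q\leq n-k$, $E^q=\{0\}$ otherwise, and $d=\opa_{b,\wt c}$; by the definition of $H^{r,q}_{\wt c,L^p}(D)$ it is the cohomology of this complex in degree $q$, so it suffices to prove $H^q(E^\bullet)=0$. Since $p>1$ and $\frac1p+\frac1{p'}=1$, every $L^p_{r,q}(D)$ and every $L^{p'}_{n-r,q}(D)$ is a reflexive Banach space, and both $\opa_{b,\wt c}$ and $\opa_b$ are densely defined closed unbounded operators (their domains contain $\dc^{r,q}(D)$, resp.\ $\ec^{n-r,q}(M)_{|_D}$, which are dense), so the two complexes are of the type required by Proposition \ref{dual}. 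By the duality recalled before the statement, together with part (i) of the transpose Proposition, the dual complex $(E'_\bullet,d')$ of $(E^\bullet,d)$ is $E'_j=L^{p'}_{n-r,n-k-j}(D)$ with $d'=\opa_b$, the pairing being $\langle f,g\rangle=\int_D f\wedge g$.

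The key translation is to read off the homology of the dual complex. Since $d'=\opa_b$ sends $E'_j=L^{p'}_{n-r,n-k-j}(D)$ to $E'_{j-1}=L^{p'}_{n-r,n-k-j+1}(D)$, the group $H_j(E'_\bullet)$ is exactly the $\opa_b$-cohomology of $(L^{p'}_{n-r,\bullet}(D),\opa_b)$ in bidegree $(n-r,n-k-j)$, that is $H_j(E'_\bullet)=H^{n-r,n-k-j}_{L^{p'}}(D)$. Specialising to $j=q$ and $j=q-1$ yields $H_q(E'_\bullet)=H^{n-r,n-k-q}_{L^{p'}}(D)$ and $H_{q-1}(E'_\bullet)=H^{n-r,n-k-q+1}_{L^{p'}}(D)$.

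Finally I would invoke Proposition \ref{dual} with its parameter set equal to $q-1$, so that its conclusion reads $H^q(E^\bullet)=0$; the two hypotheses it demands, that $H_{q-1}(E'_\bullet)$ be Hausdorff and that $H_q(E'_\bullet)$ vanish, are by the identifications above exactly the standing assumptions that $H^{n-r,n-k-q+1}_{L^{p'}}(D)$ is Hausdorff and $H^{n-r,n-k-q}_{L^{p'}}(D)=0$. Hence $H^q(E^\bullet)=H^{r,q}_{\wt c,L^p}(D)=0$, as required. No genuine analytic obstacle remains at this stage, since all the closed-range and duality machinery is already packaged inside Proposition \ref{dual}; the only points demanding care are the degree reflection $j\mapsto n-k-j$ relating the two complexes and the off-by-one shift between the homological index in Proposition \ref{dual} and the cohomological degree $q$ one wishes to annihilate.
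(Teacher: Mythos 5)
Your proposal is correct and is essentially identical to the paper's own proof: the paper likewise obtains the theorem by applying Proposition \ref{dual} to the complex $E^q=L^p_{r,q}(D)$ with $d=\opa_{b,\wt c}$, whose dual complex is $(L^{p'}_{n-r,\bullet}(D),\opa_b)$ via part (i) of the transpose proposition, so that the hypotheses on $H^{n-r,n-k-q+1}_{L^{p'}}(D)$ and $H^{n-r,n-k-q}_{L^{p'}}(D)$ are exactly those of Proposition \ref{dual} shifted by one degree. Your explicit bookkeeping of the index reflection $j\mapsto n-k-j$ and the off-by-one shift simply makes precise what the paper leaves implicit.
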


Using the finiteness and vanishing theorems from section \ref{AG}, this gives
\begin{cor}\label{cauchy}
Let $M$  be a $\ci$-smooth, $s$-concave, $s\geq 2$, generic CR submanifold of real codimension $k$ in a complex manifold $X$ of complex dimension $n$ and $D$ a relatively compact completely strictly $s$-convex domain in $M$ with $\ci$-smooth boundary. If $p>1$, for any pair $(r,q)$ with $0\leq r\leq n$ and $1\leq q\leq s-1$, given any $(r,q)$-form $f$ with coefficients in $L^p(M)$ such that
$${\rm supp}~f\subset\ol D \quad {and}\quad \opa_b f=0~{in~the~weak~sense~in~}M,$$ there exists a $(r,q-1)$-forme $g$  with coefficients in $L^p(M)$ such that
$${\rm supp}~g\subset\ol D \quad {and}\quad \opa_b g=f~{in~the~weak~sense~in~}M.$$
\end{cor}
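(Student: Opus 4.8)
The plan is to deduce the corollary directly from the preceding theorem on the weak Cauchy problem, whose two standing hypotheses I will verify using the finiteness and vanishing results of Section \ref{AG}. Recall first that solvability of the stated Cauchy problem is exactly the vanishing of $H^{r,q}_{\wt c,L^p}(D)$: a class in this group is represented by an $\opa_b$-closed $(r,q)$-form with $L^p$ coefficients and support in $\ol D$, and it is trivial precisely when such a form equals $\opa_b g$ for some $(r,q-1)$-form $g$ with $L^p$ coefficients supported in $\ol D$. So it suffices to show $H^{r,q}_{\wt c,L^p}(D)=0$ for $1\leq q\leq s-1$.

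Set $p'$ conjugate to $p$, so that $p'>1$. By the preceding theorem applied with this $q$ (note $1\leq q\leq s-1<n-k$, since $s$-concavity forces $s\leq (n-k)/2$, and $s\geq 2$ makes the range of $q$ nonempty), it is enough to check that $H^{n-r,n-k-q+1}_{L^{p'}}(D)$ is Hausdorff and that $H^{n-r,n-k-q}_{L^{p'}}(D)=0$. The key observation is a bookkeeping one on degrees: for $1\leq q\leq s-1$ one has
$$n-k-s+2\leq n-k-q+1\leq n-k \quad\text{and}\quad n-k-s+1\leq n-k-q\leq n-k-1,$$
so both degrees $n-k-q+1$ and $n-k-q$ lie in the Andreotti--Grauert window $[\,n-k-s+1,\,n-k\,]$ in which the results of Section \ref{AG} apply, and $n-r\in[0,n]$ is admissible as a first index.

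For the Hausdorff condition I would invoke Theorem \ref{compact}: since $D$ is in particular strictly $s$-convex with $\ci$-smooth boundary and $n-k-q+1$ lies in the window, the group $H^{n-r,n-k-q+1}_{L^{p'}}(D)$ is finite dimensional, hence Hausdorff. For the vanishing I would use the stronger hypothesis that $D$ is \emph{completely} strictly $s$-convex: the vanishing corollary at the end of Section \ref{AG} then gives $H^{n-r,n-k-q}_{L^{p'}}(D)=0$, since $n-k-q$ also lies in the window. With both hypotheses of the preceding theorem verified, I conclude $H^{r,q}_{\wt c,L^p}(D)=0$, which is the asserted solvability.

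There is essentially no analytic obstacle left at this stage: the real work resides in the finiteness and closed-range statement of Theorem \ref{compact} (via the compactness of the operators $\wt K^r_q$) and in the vanishing corollary for completely strictly $s$-convex domains, together with the duality input packaged in the preceding theorem. The one point demanding care is the distinction between \emph{strictly} $s$-convex domains (which yield only finiteness, used for the Hausdorff hypothesis in degree $n-k-q+1$) and \emph{completely} strictly $s$-convex domains (which yield genuine vanishing, used in degree $n-k-q$); conflating the two would break the argument, since the preceding theorem truly requires a vanishing statement and not merely closedness of the range.
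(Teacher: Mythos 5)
Your proposal is correct and coincides with the paper's own (very terse) proof: the paper likewise deduces the corollary from the preceding duality theorem by citing Theorem \ref{compact} for the Hausdorff hypothesis in degree $n-k-q+1$ and the vanishing corollary for completely strictly $s$-convex domains in degree $n-k-q$, exactly as you do, including the observation that both degrees fall in the window $[\,n-k-s+1,\,n-k\,]$ when $1\leq q\leq s-1$. One small precision: the inference ``finite dimensional, hence Hausdorff'' is legitimate here because Theorem \ref{compact} also asserts explicitly that $\opa_b(L^{p'}_{n-r,n-k-q}(D))$ is a closed subspace of $L^{p'}_{n-r,n-k-q+1}(D)$, which is the Hausdorff property you need.
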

The case $q=1$ was already settled in Proposition \ref{d1} and Theorem
\ref{resolcomp1} under the assumptions $M$ is 1-concave,
$H^{r,1}_{c,cur}(M)=0$  and $M\setminus D$ connected or
$<f,\varphi>=0$ for all $\varphi\in Z^{r,n-k-1}_\infty(U)$ for any
neighborhood $U$ of $\ol D$. 

\providecommand{\bysame}{\leavevmode\hbox to3em{\hrulefill}\thinspace}
\providecommand{\MR}{\relax\ifhmode\unskip\space\fi MR }
\providecommand{\MRhref}[2]{%
  \href{http://www.ams.org/mathscinet-getitem?mr=#1}{#2}
}
\providecommand{\href}[2]{#2}

\enddocument

\end